\newtheorem{thm}{Theorem}[section]
\newtheorem{prop}[thm]{Proposition}
\newtheorem{coro}[thm]{Corollary}
\newtheorem{lem}[thm]{Lemma}
\newtheorem{rem}[thm]{Remark}
\theoremstyle{definition}
\newtheorem{de}[thm]{Definition}
\newcommand{\C}{{\mathbb{C}}}
\newcommand{\Z}{{\mathbb{Z}}}
\newcommand{\N}{{\mathbb{N}}}
\newcommand{\te}[1]{\textnormal{{#1}}}
\newcommand{\set}[2]{{
    \left.\left\{
        {#1}
    \,\right|\,
        {#2}
    \right\}
}}
\def \<{{\langle}}
\def \>{{\rangle}}
\def\({\left(}
\def\){\right)}
\newcommand{\wh}[1]{\widehat{#1}}
\newlength{\dhatheight}
\newcommand{\nord}{\mbox{\scriptsize ${\circ\atop\circ}$}}
\newcommand{\h}{{\frak h}}
\newcommand{\al}{\alpha}
\newcommand{\be}{\beta}
\newcommand{\half}{{\frac{1}{2}}}
\newcommand{\inverse}{^{-1}}
\newcommand{\uz}{_{(0)}}
\newcommand{\E}{{\mathcal{E}}}
\newcommand{\ot}{\otimes}
\newcommand{\vac}{{{\bf 1}}}
\newcommand{\hctvs}[1]{Hausdorff complete linear topological vector space}
\newcommand{\hcta}[1]{Hausdorff complete linear topological algebra}
\newcommand{\ons}[1]{open neighborhood system}
\newcommand{\der}{\mathcal D}
\makeatletter \@addtoreset{equation}{section}
\title{$(G,\chi_{\phi})$-equivariant $\phi$-coordinated quasi modules for nonlocal vertex algebras}
\author{Naihuan Jing$^1$}
\address{Department of Mathematics, North Carolina State University, Raleigh, NC 27695,
USA}
\email{jing@math.ncsu.edu}
\thanks{$^1$Partially supported by NSF of China (No.11531004) and Simons Foundation (No.198129).}
\author{Fei Kong$^2$}
\address{Key Laboratory of Computing and Stochastic Mathematics (Ministry of Education), School of Mathematics and Statistics, Hunan Normal University, Changsha, China 410006} \email{kongmath@hunnu.edu.cn}
\thanks{$^2$Partially supported by NSF of China (No.11701183).}
\author{Haisheng Li$^3$}
\address{Department of Mathematical Sciences, Rutgers University, Camden, NJ 08102, USA}
\email{hli@camden.rutgers.edu}
 \author{Shaobin Tan$^4$}
 \address{School of Mathematical Sciences, Xiamen University,
 Xiamen, China 361005} \email{tans@xmu.edu.cn}
 \thanks{$^4$Partially supported by NSF of China (No.11531004).}
\subjclass[2010]{Primary 17B69, 17B68; Secondary  17B10, 81R10} \keywords{vertex algebras, nonlocal vertex algebras, {$phi$}-coordinated quasi modules, generalized commutator formula, lattice vertex operator algebras}
\begin{document}

\begin{abstract}
In this paper, we study
$(G,\chi_{\phi})$-equivariant $\phi$-coordinated quasi modules for  nonlocal vertex algebras.
Among the main results, we establish several conceptual results, including a generalized commutator formula and
a general construction of weak quantum vertex algebras and their $(G,\chi_{\phi})$-equivariant $\phi$-coordinated quasi modules.
As an application, we also construct (equivariant) $\phi$-coordinated quasi modules for lattice vertex algebras by using Lepowsky's work
on twisted vertex operators.
\end{abstract}

\maketitle

\section{Introduction}
In vertex algebra theory, among the most important notions are  module and twisted module,
which were introduced in the early development (see \cite{FLM}).
Later, a notion of quasi module, generalizing that of module, was introduced in \cite{li-new}
in order to associate vertex algebras to a certain family of infinite-dimensional Lie algebras.
Indeed, with this generalization vertex algebras can be associated to a much wider variety of infinite-dimensional Lie algebras.
To study quasi modules more effectively, vertex algebras with a certain enhanced structure,
called $\Gamma$-vertex algebras,
were studied in \cite{li-gamma} (cf. \cite{li-new}), and meanwhile a notion of
equivariant quasi module for a $\Gamma$-vertex algebra was introduced.
The notion of quasi module actually has a close relation with  that of twisted module;
It was proved in \cite{li-twisted-quasi} that for a general vertex operator algebra $V$
in the sense of \cite{FLM} and \cite{fhl} with a finite order automorphism $\sigma$,
the category of $\sigma$-twisted $V$-modules is canonically isomorphic to a subcategory of equivariant quasi $V$-modules.

In \cite{ek},  Etingof and Kazhdan  introduced a fundamental notion  of quantum vertex operator algebra
in the sense of formal deformation, where a key axiom is called $S$-locality.
With this as one of the main motivations, a  theory of (weak) quantum vertex algebras and
their quasi modules was developed in \cite{li-nonlocal,li-qva2}, where
quantum vertex algebras are natural generalizations of  vertex algebras and vertex super-algebras.
It has been well understood that vertex algebras are analogues and generalizations of
commutative and associative algebras. What were called nonlocal vertex algebras are
noncommutative associative algebra counterparts of vertex algebras, which were studied
independently in \cite{bk}  as field algebras and in \cite{li-g1} as axiomatic $G_1$-vertex algebras.
A weak quantum vertex algebra is simply a nonlocal vertex algebra that satisfies the $S$-locality.

With the goal to associate quantum vertex algebras in the sense of  \cite{li-nonlocal} to quantum affine algebras,
a theory of what were called $\phi$-coordinated quasi modules for quantum vertex algebras was developed
in \cite{li-cmp,li-jmp}.  In this theory, $\phi$ is what was called an associate of the
$1$-dimensional additive formal group, where usual quasi modules
 are simply $\phi$-coordinated quasi modules with $\phi$ taken to be the formal group itself.
By using this new theory with $\phi$ taken to be another particular associate,
weak quantum vertex algebras were associated {\em conceptually} to quantum affine algebras.
It remains to make  this conceptual association explicit.

The current paper is one in a series to establish and study explicit associations of quantum vertex algebras
to various algebras such as quantum affine algebras.
In this paper, we systematically study $\phi$-coordinated quasi modules for general nonlocal vertex algebras
and establish certain general results, laying the foundation for this series of studies.
Among the main results,  we obtain certain generalized commutator relations
for vertex operators on nonlocal vertex algebras and their $\phi$-coordinated quasi modules, and
we give a general construction of weak quantum vertex algebras and $\phi$-coordinated quasi modules.
Furthermore, we construct $\phi$-coordinated quasi modules for vertex algebras associated to nondegenerate even lattices.

Now, we continue the introduction with more technical details.
Let us start with the notion of vertex algebra. For a vertex algebra $V$, among the main ingredients is a linear map
$Y(\cdot,x):\ V\rightarrow (\te{End}V)[[x,x^{-1}]]$, called  the vertex operator map,  where the main axioms are
{\em weak commutativity (namely, locality):} For any $u,v\in V$, there exists a nonnegative integer $k$ such that
\begin{align}
(x_1-x_2)^{k}Y(u,x_1)Y(v,x_2)=(x_1-x_2)^{k}Y(v,x_2)Y(u,x_1)
\end{align}
and {\em weak associativity:} For any $u,v\in V,\ w\in W$, there exists a nonnegative integer $l$ such that
\begin{align}
(x_0+x_2)^{l}Y(u,x_0+x_2)Y(v,x_2)w=(x_0+x_2)^{l}Y(Y(u,x_0)v,x_2)w.
\end{align}
Let $V$ be a vertex algebra. For  a $V$-module $W$ with $Y_{W}(\cdot,x)$
denoting the vertex operator map,  literally the same locality and the same weak associativity hold.
That is, for any $u,v\in V$, there exists a nonnegative integer $k$ such that
\begin{align}
(x_1-x_2)^{k}Y_{W}(u,x_1)Y_{W}(v,x_2)=(x_1-x_2)^{k}Y_{W}(v,x_2)Y_{W}(u,x_1)
\end{align}
and for any $u,v\in V,\ w\in W$, there exists a nonnegative integer $l$ such that
\begin{align}
(x_0+x_2)^{l}Y_{W}(u,x_0+x_2)Y_{W}(v,x_2)w=(x_0+x_2)^{l}Y_{W}(Y(u,x_0)v,x_2)w.
\end{align}
Note that in the definition of a module weak associativity alone is sufficient.

As for a quasi module $(W,Y_{W})$  for a vertex algebra $V$, the key difference is that
 vertex operators $Y_{W}(v,x)$ on $W$ for $v\in V$, instead of being  local, are
 {\em quasi local} in the sense that for $u,v\in V$,
 there exists a nonzero polynomial $p(x_1,x_2)$ such that
 \begin{align}
 p(x_1,x_2)Y_{W}(u,x_1)Y_{W}(v,x_2)=p(x_1,x_2)Y_{W}(v,x_2)Y_{W}(u,x_1).
 \end{align}
 Correspondingly, quasi modules satisfy {\em quasi weak associativity}:
 For any $u,v\in V,\ w\in W$, there exists a nonzero polynomial $q(x_1,x_2)$ such that
 \begin{align}
q(x_0+x_2,x_2)Y_{W}(u,x_0+x_2)Y_{W}(v,x_2)w=q(x_0+x_2,x_2)Y_{W}(Y(u,x_0)v,x_2)w.
\end{align}

Notice that  in the definitions of the notions of module and quasi module (and twisted module as well) for a vertex algebra,
a common phenomenon  is the formal substitution $x_1=x_2+x_0$ in the weak associativity axiom.
Polynomial  $F(x,y)=x+y$ is known as the $1$-dimensional additive formal group (law) and
the role of the formal group played in vertex algebra theory has long been recognized (cf. \cite{bor}).
What is new is a role played by what was called associate of $F(x,y)$.
By definition (see \cite{li-cmp}), an associate is a formal series $\phi(x,z)\in \C((x))[[z]]$, satisfying the condition
$$\phi(x,0)=x,\   \   \  \  \phi(\phi(x,y),z)=\phi(x,y+z).$$
We see that an associate to $F(x,y)$ is the same as a $G$-set to a group $G$.
 It was proved therein that for any $p(x)\in \C((x))$,  the formal series $\phi(x,z)$ defined by
 $\phi(x,z)=e^{zp(x)\frac{d}{dx}}x$
 is an associate of $F(x,y)$ and every associate is of this form.
In particular, taking $p(x)=1$ we get $\phi(x,z)=x+z=F(x,z)$, whereas taking $p(x)=x$, we get $\phi(x,z)=xe^z$.
The essence of \cite{li-cmp} is that  to each associate $\phi(x,z)$, a theory of $\phi$-coordinated
quasi modules for a general nonlocal vertex algebra $V$ is attached.
The main defining  property for $\phi$-coordinated quasi modules is the {\em weak $\phi$-associativity:} For any $u,v\in V$,
there exists a nonzero polynomial $p(x_1,x_2)$ such that
\begin{align}
p(x_1,x_2)Y_{W}(u,x_1)Y_{W}(v,x_2)\in \te{Hom}(W,W((x_1,x_2)))
\end{align}
and
 \begin{align*}
\left(p(x_1,x_2)Y_{W}(u,x_1)Y_{W}(v,x_2)\right)|_{x_1=\phi(x_2,x_0)}
=p(\phi(x_2,x_0),x_2)Y_{W}(Y(u,x_0)v,x_2).
\end{align*}

Let $G$ be a group with  a linear character $\chi$. By {\em a $(G,\chi)$-module
nonlocal vertex algebra} we mean a nonlocal vertex algebra $V$ together with a representation $R$ of $G$
on $V$ such that $R(g){\bf 1}={\bf 1}$ and
\begin{eqnarray*}
R(g)Y(u,x)R(g)^{-1}=Y(R(g)u,\chi(g)x)
\end{eqnarray*}
on $V$  for $g\in G,\ u\in V$. This notion slightly generalizes that of $\Gamma$-vertex algebra,
introduced  in \cite{li-gamma} (cf. \cite{li-new}).
Note that if $\chi$ is the trivial character,  $G$ acts on $V$ as an automorphism group.
Assume that $V$ is a $(G,\chi)$-module nonlocal vertex algebra.
Let $\chi_{\phi}$ be another linear character of $G$ such that
$$\phi(x,\chi(g)z)=\chi_{\phi}(g)\phi (\chi_{\phi}(g)^{-1}x,z)\   \   \mbox{ for }g\in G.$$
We define a $(G,\chi_{\phi})$-equivariant $\phi$-coordinated quasi $V$-module to be
a $\phi$-coordinated quasi $V$-module $(W,Y_{W})$ satisfying the conditions that
$$Y_{W}(R(g)u,x)=Y_{W}(u,\chi_{\phi}(g)^{-1}x)\   \   \   \mbox{ for }g\in G,\ u\in V$$
and that for $u,v\in V$, there exists a nonzero polynomial $f(x)$
whose zeroes are contained in $\chi_{\phi}(G)$ such that
\begin{eqnarray}
f(x_1/x_2)Y_{W}(u,x_1)Y_{W}(v,x_2)\in \te{Hom} (W,W((x_1,x_2))).
\end{eqnarray}
In this paper, we mostly restrict ourselves to the case with $\phi(x,z)=\phi_{r}(x,z):=e^{zx^{r+1}\frac{d}{dx}}x$
where $r\in \Z$.
This very notion with $\phi(x,z)=\phi_{-1}(x,z)=x+z$ is reduced to
the notion of $G$-equivariant quasi module studied in \cite{li-gamma}, and with $\phi(x,z)=\phi_0(x,z)=xe^z$
is reduced to the notion of $G$-equivariant $\phi$-coordinated quasi module studied in \cite{li-jmp}.
As one of the main results of this paper, we obtain
a generalized commutator formula 
for nonlocal vertex algebras generated by vertex operators on vector spaces.

Note that $\phi_{r}$-coordinated modules for vertex algebras have been previously studied in \cite{blp}, where
 a Jacobi type identity and a commutator formula for $\phi_r$-coordinated modules were obtained, and
$\phi_r$-coordinated modules for vertex algebras associated to Novikov algebras by Primc were also studied.

In  vertex algebra theory,  vertex algebras $V_{L}$ associated to nondegenerate
even lattices $L$ (see \cite{bor}, \cite{FLM}) play an important role.
These vertex algebras have been well studied in literature and their
irreducible modules as well as twisted modules have been constructed and classified
(see \cite{Lep}, \cite{FLM}, \cite{dong1}, \cite{dong2}, \cite{dl}, \cite{dl2}; \cite{LL}).
 In this paper, using the work of Lepowsky on twisted vertex operators \cite{Lep},
we  construct $(G,\chi_{\phi})$-equivariant $\phi$-coordinated quasi $V_{L}$-modules
with $G=\<\wh\mu\>$, where $\wh\mu$ is an automorphism  of $V_{L}$,
lifted from a certain isometry $\mu$ of $L$.
Note that  an associative algebra $A(L)$ was introduced and used in \cite{LL} as an essential tool
in the construction of $V_{L}$-modules.
This algebra $A(L)$ also plays a crucial role in the current paper for our construction of $\phi$-coordinated
quasi $V_{L}$-modules.

This paper is organized as follows. In Section 2, we revisit  formal calculus involving associates of
the additive formal group and delta functions.
In Section 3,  we establish several general results on equivariant $\phi$-coordinated quasi modules
for nonlocal vertex algebras. In Section 4, we give a vertex operator construction of
$(\Z_N,\chi_{\phi})$-equivariant $\phi$-coordinated quasi $V_{L}$-modules.

\section{$\phi$-coordinated quasi modules for nonlocal vertex algebras}
In this section, we revisit $\phi$-coordinated quasi modules for nonlocal vertex algebras.
As the main results, we establish several technical results, including a generalized commutator formula
(Theorem \ref{prop:tech-calculation5}) and a general construction of weak quantum vertex algebras and
$\phi$-coordinated quasi modules (Theorem \ref{qva-specialization-2}).

\subsection{Associates of formal groups}
In this subsection,  we first briefly recall from \cite{li-cmp} the notion of associate for the additive formal group (law) and
some basic results, and then we present some technical results on delta functions (Lemma \ref{lem:delta-func-tech-calculation4}).

A {\em one-dimensional formal group (law)} over $\C$ (cf. \cite{Ha-formal-group-book})
is a formal power series $F(x,y)\in \C[[x,y]]$ such that
$$F(0,y)=y,\  \ F(x,0)=x,\   \    F(F(x,y),z)=F(x,F(y,z)).$$
A typical example of one-dimensional formal group is the {\em additive formal group} $F_a(x,y)=x+y$,
simply denoted by $F_{a}$.

Let $F(x,y)$ be a one-dimensional formal group over $\C$.
An {\em associate}  of $F(x,y)$ is a formal series
$\phi(x,z)\in\C((x))[[z]]$, satisfying the condition that
\begin{align*}
  \phi(x,0)=x,\,\,\,\,
  \phi(\phi(x,y),z)=\phi(x,F(y,z)).
\end{align*}

The following classification result was obtained in \cite{li-cmp}:

\begin{prop}\label{prop:classification-assos}
Let $p(x)\in \C((x))$.
Set
\begin{align*}
  \phi(x,z)=e^{zp(x)\frac{d}{dx}}x
  =\sum_{n\ge0}\frac{z^n}{n!}\(p(x)\frac{d}{dx}\)^nx\in\C((x))[[z]].
\end{align*}
Then $\phi(x,z)$ is an associate of $F_a(x,y)$.
On the other hand, every associate of $F_a(x,y)$ is of this form with $p(x)$ uniquely determined.
\end{prop}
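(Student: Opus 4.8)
The plan is to prove both directions by working with the differential equation that characterizes an associate. First I would establish the forward direction. Given $p(x)\in\C((x))$, set $\phi(x,z)=e^{zp(x)\frac{d}{dx}}x$; this is a well-defined element of $\C((x))[[z]]$ since applying $p(x)\frac{d}{dx}$ repeatedly keeps us in $\C((x))$, and the powers of $z$ are graded so the sum converges in the $z$-adic topology. The normalization $\phi(x,0)=x$ is immediate from the $n=0$ term. The key identity is that $\phi(x,z)$ satisfies the ODE $\frac{\partial}{\partial z}\phi(x,z)=p(\phi(x,z))$ with initial condition $\phi(x,0)=x$: indeed $\frac{\partial}{\partial z}e^{zp(x)\frac{d}{dx}}x = p(x)\frac{d}{dx}\,e^{zp(x)\frac{d}{dx}}x$, and one checks (by the standard fact that $e^{zp(x)\frac{d}{dx}}$ is an algebra automorphism acting as substitution $x\mapsto\phi(x,z)$, equivalently that $e^{zp(x)\frac{d}{dx}}f(x)=f(\phi(x,z))$ for any $f\in\C((x))$) that this equals $p(\phi(x,z))$. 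To verify $\phi(\phi(x,y),z)=\phi(x,y+z)$, I would fix $x$ and $y$ and regard both sides as elements of $\C((x))[[y]][[z]]$: both satisfy, as functions of $z$, the same ODE $\frac{\partial}{\partial z}(\,\cdot\,)=p(\,\cdot\,)$ with the same value $\phi(x,y)$ at $z=0$, and uniqueness of solutions to such an ODE in the formal setting (solving coefficient-by-coefficient in powers of $z$) forces them to agree.

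For the converse, suppose $\phi(x,z)\in\C((x))[[z]]$ is an associate of $F_a$. Differentiate the cocycle identity $\phi(\phi(x,y),z)=\phi(x,y+z)$ with respect to $z$ and then set $z=0$; using $\phi(w,0)=w$ on the left this yields
\begin{align*}
\left.\frac{\partial}{\partial z}\phi(w,z)\right|_{z=0}\Big|_{w=\phi(x,y)} = \frac{\partial}{\partial y}\phi(x,y).
\end{align*}
Define $p(x):=\left.\frac{\partial}{\partial z}\phi(x,z)\right|_{z=0}\in\C((x))$; the displayed relation says $p(\phi(x,y))=\frac{\partial}{\partial y}\phi(x,y)$, i.e. $\phi(x,y)$ satisfies exactly the ODE from the first part with initial condition $\phi(x,0)=x$. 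By the same formal uniqueness argument, $\phi(x,y)=e^{yp(x)\frac{d}{dx}}x$. Uniqueness of $p$ is clear since $p(x)$ is recovered as $\frac{\partial}{\partial z}\phi(x,z)|_{z=0}$.

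The main technical point to be careful about is the precise meaning of the substitution $\phi(x,y)\mapsto$ (something), i.e. the composition $\phi(\phi(x,y),z)$: one must check that substituting $\phi(x,y)$ for the first argument of $\phi$ is well-defined in $\C((x))[[y]][[z]]$. Since $\phi(x,y)=x+(\text{higher order in }y)$ with $\phi(x,0)=x\in\C((x))$, substituting it into a series in $\C((x))[[z]]$ makes sense provided the Laurent-expansion conventions are fixed — this is the content of the formal-group axiom being well-posed, and is handled in \cite{li-cmp}; I would simply invoke that the composition in the definition of associate is understood in this sense. The rest — interchanging $\frac{\partial}{\partial z}$ with the infinite sum, and solving the formal ODE recursively in powers of $y$ — is routine. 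The only genuine obstacle is bookkeeping with these topological/substitution conventions; the differential-equation uniqueness argument itself is short.
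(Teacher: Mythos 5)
The paper does not prove Proposition~\ref{prop:classification-assos}; it is quoted verbatim from \cite{li-cmp} with a citation and no argument, so there is no internal proof to compare against. On its own merits, your argument is correct: the characterization of $\phi$ by the formal ODE $\partial_z\phi(x,z)=p(\phi(x,z))$, $\phi(x,0)=x$, together with coefficient-by-coefficient uniqueness of solutions in $\C((x))[[y]][[z]]$, does prove both directions, and defining $p(x)=\partial_z\phi(x,z)|_{z=0}$ and differentiating the cocycle identity at $z=0$ is exactly the right way to extract $p$ and show uniqueness.

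One remark on economy: for the forward direction you can avoid the ODE entirely. Writing $D=p(x)\frac{d}{dx}$, the substitution property you already invoke, $e^{yD}f(x)=f(\phi(x,y))$ for $f\in\C((x))$, applied termwise to the $z^n$-coefficients of $\phi(\cdot,z)$, gives
$\phi(\phi(x,y),z)=\sum_{n\ge0}\frac{z^n}{n!}\bigl(D^n x\bigr)\big|_{x\mapsto\phi(x,y)}
=e^{yD}\sum_{n\ge0}\frac{z^n}{n!}D^n x=e^{yD}e^{zD}x=e^{(y+z)D}x=\phi(x,y+z)$,
i.e.\ the associativity is just $e^{yD}e^{zD}=e^{(y+z)D}$ pushed through the substitution lemma. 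This shortcuts the ODE-uniqueness step in the first half (you still need it, or a direct Taylor-formula computation, in the converse). Either route is fine; the load-bearing fact in both is precisely the substitution lemma $e^{zD}f(x)=f(\phi(x,z))$, and you are right that this, plus the well-posedness of composing $\phi(\phi(x,y),z)$ in $\C((x))[[y,z]]$, is where the real bookkeeping lives.
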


\begin{rem}
{\em Notice that taking $p(x)=1$ in Proposition \ref{prop:classification-assos}, we get
$\phi(x,z)=x+z=F_{a}(x,z)$ (the formal group itself), and taking $p(x)=x$ we get
$\phi(x,z)=xe^{z}$.}
\end{rem}

For convenience, for $n\in \N$, set
\begin{eqnarray}
\partial^{n}_{z}=\left(\frac{\partial}{\partial z}\right)^{n}\quad\te{and}
\quad\partial^{(n)}_{z}=\frac{1}{n!}\left(\frac{\partial}{\partial z}\right)^n.
\end{eqnarray}

From now on, we assume that {\em $\phi(x,z)$ is an associate of $F_{a}$ with $\phi(x,z)\ne x$,} that is,
$\phi(x,z)=e^{zp(x)\frac{d}{dx}}x$ with $p(x)\ne 0$.

\begin{rem}\label{rem:about-phi1}
{\em We here recall some basic facts and conventions from \cite[Remark 2.6, Lemma 2.7]{li-cmp}.
 First of all, for every $f(x_1,x_2)\in \C((x_1,x_2))$,  $f(\phi(x,z),x)$ exists in $\C((x))[[z]]$.
 Furthermore, the correspondence
$f(x_1,x)\mapsto f(\phi(x,z),x)$ gives a  ring embedding
$$\pi_{\phi}:\  \C((x_1,x))\rightarrow \C((x))[[z]]\subset \C((x))((z)).$$
Denote by $\C_\ast((x_1,x))$ the fraction field of $\C((x_1,x))$.
Then $\pi_{\phi}$ naturally extends to a field embedding
\begin{align*}
\pi_{\phi}^{*}: \  \C_\ast((x_1,x))\rightarrow \C((x))((z)).
\end{align*}
As a convention,  for any $f(x_1,x)\in\C_\ast((x_1,x))$, we shall view
$f(\phi(x,z),x)$ as an element of $\C((x))((z))$ through this field embedding $\pi_{\phi}^{*}$.}
\end{rem}

\begin{rem}\label{rem:about-phi2}
{\em  Note that for any $f(x_1,x_2)\in\C((x_1,x_2))$,  $f(\phi(x,y),\phi(x,z))$
equals  $$\(\sum_{m,n\ge 0}y^mz^n\frac{(p(x_1)\partial_{x_1})^m}{m!}
\frac{(p(x_2)\partial_{x_2})^n}{n!}f(x_1,x_2)\)|_{x_1=x_2=x},$$
which exists in $\C((x))[[y,z]]$.
Suppose $f(\phi(x,z),\phi(x,y))= 0$. Then $f(\phi(x,z),x)=f(\phi(x,z),\phi(x,0))=0$.
From Remark \ref{rem:about-phi1},
we have $f(x_1,x_2)=0$. Therefore, the correspondence $$f(x_1,x_2)\mapsto f(\phi(x,y),\phi(x,z))$$
is a ring embedding of $\C((x_1,x_2))$ into $\C((x))[[y,z]]$.
Just as with other iota-maps, we have field embeddings
\begin{align}
&\iota_{x,y,z}: \  \C_{*}((x_1,x_2))\rightarrow \C((x))((y))((z)),\\
&\iota_{x,z,y}: \   \C_{*}((x_1,x_2))\rightarrow \C((x))((z))((y)).
\end{align}
As a convention, for $f(x_1,x_2)\in\C_\ast((x_1,x_2))$,
we shall view $f(\phi(x,y),\phi(x,z))$ as an element of the fraction field of
$\C((x))[[y,z]]$.}
\end{rem}

The following are some results about (formal) delta-functions:

\begin{lem}\label{lem:delta-func-tech-calculation4}
Let $c\in \C^\times$.
Set
\begin{align*}
  \Delta_c(x_1,x_2,z)=e^{zp(x_2)\partial_{x_2}}\frac{p(x_1)}{x_1-cx_2}\in\C_\ast((x_1,x_2))[[z]].
\end{align*}
Then
\begin{align}
  &\(\iota_{x_1,x_2,z}-\iota_{x_2,x_1,z}\)\Delta_c(x_1,x_2,z)
  =e^{zp(x_2)\partial_{x_2}} p(x_1)x_1\inverse \delta\(\frac{cx_2}{x_1}\),\label{eq:delta-func1}\\
  &\Delta_c(x_1,\phi(x_2,w),z)=\Delta_c(x_1,x_2,w+z)\label{eq:delta-func1-2},\\
  &\Delta_c(c'x_2,x_2,z)\in\C((x_2))[[z]]\quad\te{for }c'\ne c,\label{eq:delta-func1-3}
\end{align}
and
\begin{align}\label{eq:delta-func2}
  \(\iota_{x,x_1,x_2,z}-\iota_{x,x_2,x_1,z}\) \Delta_c(\phi(x,x_1),\phi(x,x_2),z)
  =
  \left\{
  \begin{array}{ll}
  e^{z\partial_{x_2}} x_1\inverse \delta\(\frac{x_2}{x_1}\)&\te{if }c=1,\\
  0 &\te{if }c\ne 1.
  \end{array}
  \right.
\end{align}
\end{lem}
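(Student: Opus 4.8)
The plan is to treat the four identities separately but to extract as much mileage as possible from the basic $\delta$-function identity $(\iota_{x_1,x_2}-\iota_{x_2,x_1})\frac{1}{x_1-cx_2}=x_1^{-1}\delta\!\left(\frac{cx_2}{x_1}\right)$ together with the formal substitution rules recorded in Remarks \ref{rem:about-phi1} and \ref{rem:about-phi2}. For \eqref{eq:delta-func1}, I would first note that the operator $e^{zp(x_2)\partial_{x_2}}$ acts only on the $x_2$-variable and commutes with the formation of the two expansions $\iota_{x_1,x_2,z}$ and $\iota_{x_2,x_1,z}$ in the sense that one may first expand $\frac{p(x_1)}{x_1-cx_2}$ in the appropriate region, multiply by $p(x_1)$ (harmless, a unit up to the chosen order), and only then apply $e^{zp(x_2)\partial_{x_2}}$ coefficient-wise; then \eqref{eq:delta-func1} is just $e^{zp(x_2)\partial_{x_2}}$ applied to the classical identity above, after multiplying through by $p(x_1)$. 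The only subtlety is to justify that $e^{zp(x_2)\partial_{x_2}}$ really does commute past the difference of the two $\iota$-maps — this is where one invokes that each $\iota$-map is a ring (indeed field) homomorphism and that differentiation in $x_2$ is a continuous derivation on each of the two completed fields, so applying the exponential of $z$ times that derivation is a well-defined continuous operator on $\C((x_1))((x_2))((z))$ and on $\C((x_2))((x_1))((z))$ respectively, agreeing with the naive coefficient-wise action.

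For \eqref{eq:delta-func1-2}, the idea is purely algebraic: substituting $\phi(x_2,w)$ for $x_2$ inside $\Delta_c$ means replacing $x_2$ by $\phi(x_2,w)$ in $\frac{p(x_1)}{x_1-cx_2}$ and then applying $e^{zp(\cdot)\partial}$ in the new variable. But by the associate property $\phi(\phi(x_2,w),z)=\phi(x_2,w+z)$, together with the fact (implicit in Remark \ref{rem:about-phi1}) that $g\mapsto g(\phi(x_2,w),x_2)$ realizes $e^{wp(x_2)\partial_{x_2}}$, one gets that applying $e^{zp(x_2)\partial_{x_2}}$ after the substitution $x_2\mapsto\phi(x_2,w)$ is the same as applying $e^{(w+z)p(x_2)\partial_{x_2}}$ directly. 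Concretely: $\Delta_c(x_1,\phi(x_2,w),z)=e^{zp(\phi(x_2,w))\partial_{\phi(x_2,w)}}\frac{p(x_1)}{x_1-c\,\phi(x_2,w)}$, and one rewrites $e^{zp(y)\partial_y}\big|_{y=\phi(x_2,w)}$ composed with $e^{wp(x_2)\partial_{x_2}}$ as $e^{(w+z)p(x_2)\partial_{x_2}}$ using that these one-parameter flows compose additively (which is exactly the associate axiom). I would spell this out as the statement that $\phi(x,y+z)$, expanded in $\C((x))[[y,z]]$, equals $\phi(\phi(x,y),z)$, and then transport it through the embedding.

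For \eqref{eq:delta-func1-3}, setting $x_1=c'x_2$ with $c'\neq c$ turns $\frac{p(x_1)}{x_1-cx_2}$ into $\frac{p(c'x_2)}{(c'-c)x_2}$, which lies in $\C((x_2))$ since $c'-c\in\C^\times$ and $p(c'x_2)\in\C((x_2))$; applying $e^{zp(x_2)\partial_{x_2}}$ then keeps us in $\C((x_2))[[z]]$ because that operator preserves $\C((x_2))((z))$, and no negative powers of $z$ are introduced. This is essentially immediate once one observes that the pole at $x_1=cx_2$ is the only obstruction and it is avoided. Finally, \eqref{eq:delta-func2} is the main payoff and the step I expect to be the real obstacle. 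Here one substitutes $x_1\mapsto\phi(x,x_1)$ and $x_2\mapsto\phi(x,x_2)$ into $\Delta_c$, which by \eqref{eq:delta-func1-2} (applied in the variable that is being translated) and the chain-rule behaviour of $p(x)\partial_x$ under the flow — namely that $p(\phi(x,x_2))\partial_{\phi(x,x_2)}$ corresponds to $\partial_{x_2}$ under the change of variables — converts $e^{zp(x_2)\partial_{x_2}}$ into $e^{z\partial_{x_2}}$; simultaneously the factor $p(x_1)$ in the numerator becomes $p(\phi(x,x_1))$, which combines with the denominator $\phi(x,x_1)-c\,\phi(x,x_2)$. When $c=1$, the difference $\phi(x,x_1)-\phi(x,x_2)$ has a simple zero along $x_1=x_2$, and since $\frac{d}{dx_1}\phi(x,x_1)\big|_{x_1=x_2}=p(\phi(x,x_2))$ one finds $\frac{p(\phi(x,x_1))}{\phi(x,x_1)-\phi(x,x_2)}$ has residue $1$ there, so the antisymmetrization produces $x_1^{-1}\delta(x_2/x_1)$ after applying $e^{z\partial_{x_2}}$; when $c\neq 1$, the function $\frac{p(\phi(x,x_1))}{\phi(x,x_1)-c\phi(x,x_2)}$ has no pole on the "diagonal" $x_1=x_2$ after passing to $\C((x))[[x_1,x_2]]$ — the would-be pole sits at $\phi(x,x_1)=c\,\phi(x,x_2)$, which, since $\phi(x,0)=x$, forces $x=cx$ at leading order, impossible for $c\neq1$ — so the expression lies in $\C((x))[[x_1,x_2]]((z))$ and the two iota-expansions coincide, giving $0$. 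The hard part is making precise the "change of variables" claim that conjugating by $f\mapsto f(\phi(x,x_i),x)$ turns $p(x_i)\partial_{x_i}$ into $\partial_{x_i}$ and turns $p(x_1)/(x_1-x_2)$ into something with residue $1$ on the diagonal; I would handle this by differentiating the identity $\phi(\phi(x,x_i),z)=\phi(x,x_i+z)$ with respect to $z$ at $z=0$ to get $p(\phi(x,x_i))=\partial_{x_i}\phi(x,x_i)$, and then invoking the ring-embedding statement of Remark \ref{rem:about-phi2} to move the computation, which is transparent as an identity of rational functions of $x_1,x_2$, into the completed field where the $\delta$-function lives.
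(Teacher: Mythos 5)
Your proposal is correct and follows essentially the same route as the paper for all four identities: use the classical two-sided expansion of $\frac{1}{x_1-cx_2}$ for \eqref{eq:delta-func1}; use the associate axiom to collapse the nested flows for \eqref{eq:delta-func1-2} and to avoid the pole for \eqref{eq:delta-func1-3}; and for \eqref{eq:delta-func2} split into $c\ne 1$ (denominator a unit in $\C((x))[[x_1,x_2]]$ since $\phi(x,0)-c\phi(x,0)=(1-c)x\ne 0$) and $c=1$, where the key point is exactly the one you identify, namely $\partial_{x_2}\phi(x,x_2)=p(\phi(x,x_2))$, which makes the simple zero along $x_1=x_2$ have the right leading coefficient. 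The paper makes \eqref{eq:delta-func1-2}--\eqref{eq:delta-func1-3} more transparent by rewriting $\Delta_c(x_1,x_2,z)=\dfrac{p(x_1)}{x_1-c\,\phi(x_2,z)}$ up front, and makes your "residue~$1$" heuristic precise by factoring $\phi(x,x_1)-\phi(x,x_2)=(x_1-x_2)F(x,x_1,x_2)$ with $F(x,0,0)=p(x)$ invertible in $\C((x))[[x_1,x_2]]$ and then invoking the standard $\delta$-function identity for $(x_1-x_2)^{-1}$, but these are presentation choices rather than a different argument.
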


\begin{proof} Relation \eqref{eq:delta-func1} follows immediately from the fact
\begin{align*}\label{eq:delta-func-temp1}
x_1\inverse\delta\(\frac{cx_2}{x_1}\)=\iota_{x_1,x_2}\(\frac{1}{x_1-cx_2}\)
  -\iota_{x_2,x_1}\(\frac{1}{x_1-cx_2}\).
\end{align*}
Note that
\begin{align*}
 \Delta_c(x_1,x_2,z)=\frac{p(x_1)}{x_1-c\phi(x_2,z)}.
\end{align*}
Using this we get \eqref{eq:delta-func1-2} and \eqref{eq:delta-func1-3}. We also have
\begin{align*}
\Delta_c(\phi(x,x_1),\phi(x,x_2),z)
 =\frac{p(\phi(x,x_1))}{\phi(x,x_1)-c\phi(\phi(x,x_2),z)}
 =\frac{p(\phi(x,x_1))}{\phi(x,x_1)-c\phi(x,x_2+z)}.
\end{align*}
If $c\ne 1$, as $\phi(x,0)-c\phi(x,0)=(1-c)x\ne 0,$
we see that  $\phi(x,x_1)-c\phi(x,x_2)$ is invertible in $\C((x))[[x_1,x_2]]$.
Consequently, $\Delta_c(\phi(x,x_1),\phi(x,x_2),z)\in\C((x))[[x_1,x_2,z]]$ and hence
\begin{align*}
  (\iota_{x,x_1,x_2,z}-\iota_{x,x_2,x_1,z})\Delta_c(\phi(x,x_1),\phi(x,x_2),z)
  =0.
\end{align*}

Now, we consider the case $c=1$.
As $\phi(x,0)=x$, we have $$\phi(x,x_1)-\phi(x,x_2)=(x_1-x_2)F(x,x_1,x_2)$$
 for some $F(x,x_1,x_2)\in \C((x))[[x_1,x_2]]$.
 We see that $F(x,0,0)=p(x)$, so that $F(x,x_1,x_2)$ is an invertible element of $\C((x))[[x_1,x_2]]$.
In view of this, we have
 $$\iota_{x,x_1,x_2}\left(\frac{p(\phi(x,x_1))}{F(x,x_1,x_2)}\right)
 =\iota_{x,x_2,x_1}\left(\frac{p(\phi(x,x_1))}{F(x,x_1,x_2)}\right).$$
 Then we get
\begin{align*}
  &\(\iota_{x,x_1,x_2}-\iota_{x,x_2,x_1}\)
  \frac{p(\phi(x,x_1))}{\phi(x,x_1)-\phi(x,x_2)}\\
  =&\(\iota_{x,x_1,x_2}-\iota_{x,x_2,x_1}\) \frac{1}{x_1-x_2}\cdot
  \frac{p(\phi(x,x_1))}{F(x,x_1,x_2)}\\
 =& \(\iota_{x_1,x_2}\frac{1}{x_1-x_2}-\iota_{x_2,x_1}\frac{1}{x_1-x_2}\)
\iota_{x,x_1,x_2}\left( \frac{p(\phi(x,x_1))}{F(x,x_1,x_2)}\right)\\
=&x_1\inverse\delta\(\frac{x_2}{x_1}\)\iota_{x,x_1,x_2} \frac{p(\phi(x,x_1))}{\frac{\phi(x,x_1)-\phi(x,x_2)}{x_1-x_2}}\\
  =&x_1\inverse\delta\(\frac{x_2}{x_1}\)\iota_{x,x_2}\(\frac{p(\phi(x,x_2))}{\partial_{x_2}\phi(x,x_2)}\)\\
  =&x_1\inverse\delta\(\frac{x_2}{x_1}\),
\end{align*}
noticing that
\begin{align*}
\partial_{x_2} \phi(x,x_2)=e^{x_2p(x)\partial_{x}}(p(x)\partial_{x})x=e^{x_2p(x)\partial_{x}}p(x)=p(\phi(x,x_2)).
\end{align*}
Using this, we get
\begin{align*}
  &\(\iota_{x,x_1,x_2,z}- \iota_{x,x_2,x_1,z}\)\Delta(\phi(x,x_1),\phi(x,x_2),z)\\
=&\(\iota_{x,x_1,x_2,z}- \iota_{x,x_2,x_1,z}\)\frac{p(\phi(x,x_1))}{\phi(x,x_1)-\phi(x,x_2+z)}\\
=&e^{z\partial_{x_2}}\(\iota_{x,x_1,x_2}- \iota_{x,x_2,x_1}\)\frac{p(\phi(x,x_1))}{\phi(x,x_1)-\phi(x,x_2)}\\
=&e^{z\partial_{x_2}} x_1\inverse\delta\(\frac{x_2}{x_1}\),
\end{align*}
as desired.
\end{proof}

Note that
\begin{align*}
&\Delta(x_1,x_2,z)=e^{zp(x_2)\partial_{x_2}}\frac{p(x_1)}{x_1-x_2}
=\sum_{k\ge 0}\frac{1}{k!}z^k\(p(x_2)\partial_{x_2}\)^k\frac{p(x_1)}{x_1-x_2}.
\end{align*}
As an immediate consequence we have:

\begin{coro}\label{k-form}
For $k\in \N$, set
$$F_{k}(x_1,x_2)=\(p(x_2)\partial_{x_2}\)^k\frac{p(x_1)}{x_1-x_2}\in \C_{*}((x_1,x_2)).$$
Then
\begin{align*}
&\(\iota_{x_1,x_2}-\iota_{x_2,x_1}\)F_k(x_1,x_2)
=\(p(x_2)\partial_{x_2}\)^kp(x_1)x_1^{-1}\delta\left(\frac{x_2}{x_1}\right),\nonumber\\
&\(\iota_{x,x_1,x_2}-\iota_{x,x_2,x_1}\)F_{k}(\phi(x,x_1),\phi(x,x_2))
=\partial_{x_2}^kx_1\inverse \delta\(\frac{x_2}{x_1}\).
\end{align*}
\end{coro}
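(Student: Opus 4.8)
The plan is to obtain both formulas by extracting, from Lemma~\ref{lem:delta-func-tech-calculation4} specialized to $c=1$, the coefficient of $z^{k}/k!$ on each side. Write $\Delta(x_{1},x_{2},z):=\Delta_{1}(x_{1},x_{2},z)$. The displayed identity immediately preceding the statement records that
\[
\Delta(x_{1},x_{2},z)=\sum_{k\ge 0}\frac{z^{k}}{k!}F_{k}(x_{1},x_{2})\in\C_{\ast}((x_{1},x_{2}))[[z]],
\]
so $F_{k}(x_{1},x_{2})$ is exactly the coefficient of $z^{k}/k!$ in $\Delta$; likewise $e^{zp(x_{2})\partial_{x_{2}}}\bigl(p(x_{1})x_{1}^{-1}\delta(x_{2}/x_{1})\bigr)=\sum_{k\ge 0}\frac{z^{k}}{k!}(p(x_{2})\partial_{x_{2}})^{k}p(x_{1})x_{1}^{-1}\delta(x_{2}/x_{1})$ and $e^{z\partial_{x_{2}}}x_{1}^{-1}\delta(x_{2}/x_{1})=\sum_{k\ge 0}\frac{z^{k}}{k!}\partial_{x_{2}}^{k}x_{1}^{-1}\delta(x_{2}/x_{1})$.

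For the first formula I would observe that the embeddings $\iota_{x_{1},x_{2},z}$ and $\iota_{x_{2},x_{1},z}$ in \eqref{eq:delta-func1} act term by term on the expansion in powers of $z$ (they are the $\C[[z]]$-linear extensions of $\iota_{x_{1},x_{2}}$ and $\iota_{x_{2},x_{1}}$ of Remark~\ref{rem:about-phi1}), so that $(\iota_{x_{1},x_{2},z}-\iota_{x_{2},x_{1},z})\Delta(x_{1},x_{2},z)=\sum_{k\ge 0}\frac{z^{k}}{k!}(\iota_{x_{1},x_{2}}-\iota_{x_{2},x_{1}})F_{k}(x_{1},x_{2})$; comparing coefficients of $z^{k}/k!$ with the right-hand side of \eqref{eq:delta-func1} for $c=1$ gives the first identity. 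For the second formula I would run the same argument on \eqref{eq:delta-func2} with $c=1$, the one extra point being to identify the coefficient of $z^{k}/k!$ in $\Delta(\phi(x,x_{1}),\phi(x,x_{2}),z)$ with $F_{k}(\phi(x,x_{1}),\phi(x,x_{2}))$ in the sense of Remark~\ref{rem:about-phi2}. This follows from the chain-rule relation $\partial_{x_{2}}\phi(x,x_{2})=p(\phi(x,x_{2}))$ already used in the proof of Lemma~\ref{lem:delta-func-tech-calculation4}: it turns $(p(x_{2})\partial_{x_{2}})^{k}$, evaluated after the substitution $x_{2}\mapsto\phi(x,x_{2})$, into $\partial_{x_{2}}^{k}$, and matches the Taylor expansion in $z$ of $p(\phi(x,x_{1}))/(\phi(x,x_{1})-\phi(x,x_{2}+z))$. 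Granting this, $(\iota_{x,x_{1},x_{2},z}-\iota_{x,x_{2},x_{1},z})\Delta(\phi(x,x_{1}),\phi(x,x_{2}),z)=\sum_{k\ge 0}\frac{z^{k}}{k!}(\iota_{x,x_{1},x_{2}}-\iota_{x,x_{2},x_{1}})F_{k}(\phi(x,x_{1}),\phi(x,x_{2}))$, and comparing coefficients of $z^{k}/k!$ with $e^{z\partial_{x_{2}}}x_{1}^{-1}\delta(x_{2}/x_{1})$ yields the second identity.

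I expect the only genuine obstacle to be the bookkeeping in the previous paragraph: verifying that forming $\Delta(\phi(x,x_{1}),\phi(x,x_{2}),z)$ and then reading off the $z^{k}$-coefficient produces $F_{k}(\phi(x,x_{1}),\phi(x,x_{2}))$ under the conventions of Remarks~\ref{rem:about-phi1} and~\ref{rem:about-phi2}, and not some other ordering of the substitutions. Once that is settled, everything reduces to a direct comparison of power-series coefficients, which is precisely why the corollary is ``immediate'' given Lemma~\ref{lem:delta-func-tech-calculation4}.
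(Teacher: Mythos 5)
Your proposal is correct and matches the paper's intended argument exactly: the corollary is labeled an immediate consequence of Lemma~\ref{lem:delta-func-tech-calculation4}, and extracting the coefficient of $z^k/k!$ at $c=1$ from \eqref{eq:delta-func1} and \eqref{eq:delta-func2} is precisely what that claim amounts to. The ``extra point'' you worry about is actually automatic --- the substitution of Remark~\ref{rem:about-phi2} is a field embedding extended $\C[[z]]$-linearly, so it commutes with extracting $z$-coefficients and gives $\Delta(\phi(x,x_1),\phi(x,x_2),z)=\sum_{k\ge 0}\frac{z^k}{k!}F_k(\phi(x,x_1),\phi(x,x_2))$ directly --- though your chain-rule verification via $\partial_{x_2}\phi(x,x_2)=p(\phi(x,x_2))$ is also a correct way to see it.
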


\begin{lem}\label{lem:delta-func-prod-tech-calculation5}
Let $\Gamma=\{c_1,\dots,c_k\}$ be a finite nonempty subset of $\C^\times$.
Set
\begin{align}
  \Delta_{\Gamma}(x_1,x_2,z_1,\dots,z_k)=\prod_{i=1}^k\Delta_{c_i}(x_1,x_2,z_i).
\end{align}
Then
\begin{align}
&\(\iota_{x_1,x_2,z_1,\dots,z_k}-\iota_{x_2,x_1,z_1,\dots,z_k}\)
\Delta_\Gamma(x_1,x_2,z_1,\dots,z_k)\nonumber\\
=&\sum_{i=1}^k\iota_{x_2,z_1,\dots,z_k}\prod_{j\ne i}\Delta_{c_j}(c_i\phi(x_2,z_i),x_2,z_j)
p(x_1)x_1\inverse\delta\(c_i\frac{\phi(x_2,z_i)}{x_1}\).
\end{align}
Furthermore, for $1\le i\le k$ we have
\begin{align}
  \iota_{x_2,z_1,\dots,z_k}\prod_{j\ne i}\Delta_{c_j}(c_i\phi(x_2,z_i),x_2,z_j)
  \in\C((x_2))[[z_1,\dots,z_k]].
\end{align}
\end{lem}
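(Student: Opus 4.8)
The plan is to reduce the claim to an iterated application of the single-factor identity \eqref{eq:delta-func1} from Lemma \ref{lem:delta-func-tech-calculation4}, treating the product $\Delta_\Gamma = \prod_{i=1}^k \Delta_{c_i}(x_1,x_2,z_i)$ as a rational function in $x_1$ over the coefficient field $\C_\ast((x_2))((z_1,\dots,z_k))$ (or rather, over the appropriate ring of formal series in the $z_i$), and then performing a partial-fraction decomposition with respect to $x_1$. Concretely, each factor $\Delta_{c_i}(x_1,x_2,z_i) = p(x_1)/(x_1 - c_i\phi(x_2,z_i))$ has a simple pole in $x_1$ at $x_1 = c_i\phi(x_2,z_i)$, and since $c_i\phi(x_2,0) = c_ix_2$ are pairwise distinct for distinct $c_i$ (because $\Gamma\subseteq\C^\times$ has distinct elements), the points $c_i\phi(x_2,z_i)$ are "distinct" in the relevant topological sense — their differences $c_i\phi(x_2,z_i) - c_j\phi(x_2,z_j)$ are invertible, being of the form $(c_i - c_j)x_2 + (\text{higher order in the }z\text{'s})$.

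The key steps, in order: First I would establish the partial-fraction identity
\begin{align*}
\Delta_\Gamma(x_1,x_2,z_1,\dots,z_k) = \sum_{i=1}^k \Big(\prod_{j\ne i}\Delta_{c_j}(x_1,x_2,z_j)\Big)\Big|_{x_1 = c_i\phi(x_2,z_i)}\cdot \frac{p(x_1)}{x_1 - c_i\phi(x_2,z_i)} + (\text{polynomial part in }x_1),
\end{align*}
where the restriction $x_1 = c_i\phi(x_2,z_i)$ makes sense precisely because of the invertibility just noted; the polynomial part in $x_1$ is actually zero here since $p(x_1)$ has only finitely many negative powers of $x_1$ and the degree bookkeeping works out, but in any case it contributes nothing after applying $\iota_{x_1,x_2,\dots} - \iota_{x_2,x_1,\dots}$. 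Second, I would apply the operator $\iota_{x_1,x_2,z_1,\dots,z_k} - \iota_{x_2,x_1,z_1,\dots,z_k}$ term by term. The polynomial (and the regular) parts in $x_1$ are annihilated, and on the $i$-th term the expansion in $x_1$ versus $x_2$ acts only on the factor $p(x_1)/(x_1 - c_i\phi(x_2,z_i))$, producing $p(x_1)x_1^{-1}\delta(c_i\phi(x_2,z_i)/x_1)$ by the basic delta-function fact used to prove \eqref{eq:delta-func1}; the remaining factor $\prod_{j\ne i}\Delta_{c_j}(c_i\phi(x_2,z_i),x_2,z_j)$ is expanded via $\iota_{x_2,z_1,\dots,z_k}$, giving exactly the stated formula.

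For the "furthermore" part, I would observe that $\Delta_{c_j}(c_i\phi(x_2,z_i),x_2,z_j) = p(c_i\phi(x_2,z_i))/(c_i\phi(x_2,z_i) - c_j\phi(x_2,z_j))$, and the denominator is $(c_i - c_j)x_2 + O(z)$ with $c_i \ne c_j$, hence invertible in $\C((x_2))[[z_1,\dots,z_k]]$; the numerator $p(c_i\phi(x_2,z_i))$ lies in $\C((x_2))[[z_i]]$ since $\phi(x_2,z_i)\in\C((x_2))[[z_i]]$ and $\phi(x_2,0)=x_2$ so $p(\phi(x_2,z_i))$ is a well-defined substitution (cf. Remark \ref{rem:about-phi1}). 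Thus each factor, and therefore the product over $j\ne i$, lies in $\C((x_2))[[z_1,\dots,z_k]]$. This is really the analogue of \eqref{eq:delta-func1-3} in Lemma \ref{lem:delta-func-tech-calculation4}, and I would cite that lemma's argument for the regularity.

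The main obstacle I anticipate is bookkeeping the field/ring embeddings carefully: the expression $\Delta_\Gamma$ a priori lives in $\C_\ast((x_1,x_2))[[z_1,\dots,z_k]]$, and one must justify that the partial-fraction decomposition — which is an algebraic identity over the fraction field — is compatible with the various iota-expansions, and in particular that substituting $x_1 = c_i\phi(x_2,z_i)$ into $\prod_{j\ne i}\Delta_{c_j}$ commutes with the expansions in the way required. This is where I would be most careful, using the formalism of Remarks \ref{rem:about-phi1} and \ref{rem:about-phi2}; the rest is a routine generalization of the single-variable computation already carried out in Lemma \ref{lem:delta-func-tech-calculation4}.
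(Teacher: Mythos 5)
Your partial-fraction route is sound and leads to a correct proof, but it is organized differently from the paper's. The paper does not perform an explicit partial-fraction decomposition; instead it uses the telescoping identity
\begin{align*}
\prod_{i}\iota_{x_1,x_2}(\Delta_{c_i})-\prod_{i}\iota_{x_2,x_1}(\Delta_{c_i})
=\sum_{i}\Big(\prod_{j<i}\iota_{x_1,x_2}(\Delta_{c_j})\Big)
\bigl(\iota_{x_1,x_2}(\Delta_{c_i})-\iota_{x_2,x_1}(\Delta_{c_i})\bigr)
\Big(\prod_{j>i}\iota_{x_2,x_1}(\Delta_{c_j})\Big),
\end{align*}
feeds in the single-factor identity \eqref{eq:delta-func1}, and then invokes the delta-function substitution rule (their displayed \eqref{eq:desc-temp3}, established from \eqref{eq:delta-func1-3}) to turn both the $\iota_{x_1,x_2}$- and $\iota_{x_2,x_1}$-expansions of the remaining $\prod_{j\ne i}\Delta_{c_j}$ into $\iota_{x_2,z_1,\dots,z_k}\prod_{j\ne i}\Delta_{c_j}(c_i\phi(x_2,z_i),x_2,z_j)$. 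Your approach arrives at the same place: the partial-fraction identity over $\C((x_2))[[z_1,\dots,z_k]]$ is essentially the telescoping identity in disguise, and the delta-function substitution enters at the same point. Both proofs hinge on \eqref{eq:delta-func1} plus the regularity fact \eqref{eq:delta-func1-3} for the ``furthermore'' assertion, so the two routes buy you roughly the same thing; your version makes the bookkeeping with respect to $x_1$-poles a bit more visible, at the cost of the embedding-compatibility worry you yourself flag.

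One aside you make is incorrect and should be removed: the ``polynomial part in $x_1$'' is generally \emph{not} zero. For example with $p(x)=x$ and $k=2$,
$\Delta_\Gamma=x_1^2/\bigl((x_1-c_1\phi(x_2,z_1))(x_1-c_2\phi(x_2,z_2))\bigr)$
has a nonzero remainder (constant term $1$) in its partial-fraction decomposition in $x_1$; more generally, since $p(x_1)\in\C((x_1))$ is a Laurent series, the remainder is typically a nonzero element of $\C((x_1))$ with coefficients in $\C((x_2))[[z_1,\dots,z_k]]$, possibly involving infinitely many positive (and some negative) powers of $x_1$. What saves the argument is exactly the statement you hedge with: this remainder is pole-free in $x_1$ away from $0$, hence lies in the image of $\C((x_1))\otimes\C((x_2))[[z_1,\dots,z_k]]$ and is fixed by both $\iota_{x_1,x_2,\dots}$ and $\iota_{x_2,x_1,\dots}$, so their difference annihilates it. Replace the claimed vanishing with this correct reason and the proof stands.
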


\begin{proof}
Note that by Lemma \ref{lem:delta-func-tech-calculation4} (see \eqref{eq:delta-func1-3}),
for distinct  $c, c'\in\C^\times$, $\Delta_c(c'\phi(x_2,w),x_2,z)$
exists in $\C((x_2))[[z,w]]$, so that
\begin{align}
  &\iota_{x_1,x_2,z,w}\Delta_c(x_1,x_2,z)
    p(x_1)x_1\inverse\delta\(c'\frac{\phi(x_2,w)}{x_1}\)\nonumber\\
  =&\iota_{x_2,z,w}\Delta_c(c'\phi(x_2,w),x_2,z)
    e^{wp(x_2)\partial_{x_2}}p(x_1)x_1\inverse\delta\(c'\frac{x_2}{x_1}\).\label{eq:desc-temp3}
\end{align}
From \eqref{eq:delta-func1}, we have
\begin{align*}
  \(\iota_{x_1,x_2,z_i}-\iota_{x_2,x_1,z_i}\)\Delta_{c_i}(x_1,x_2,z_i)
  =p(x_1)x_1\inverse\delta\(c_i\frac{\phi(x_2,z_i)}{x_1}\)
\end{align*}
for  $1\le i\le k$.  Therefore,
\begin{align}
  &\(\iota_{x_1,x_2,z_1,\dots,z_k}-\iota_{x_2,x_1,z_1,\dots,z_k}\)
\Delta_\Gamma(x_1,x_2,z_1,\dots,z_k)\nonumber\\
=&\sum_{i=1}^k\iota_{x_1,x_2,z_1,\dots,z_k}\(\prod_{j\ne i}
\Delta_{c_j}(x_1,x_2,z_j)\)p(x_1)x_1\inverse\delta\(c_i\frac{\phi(x_2,z_i)}{x_1}\)\label{eq:desc-temp5}\\
=&\sum_{i=1}^k\iota_{x_2,z_1,\dots,z_k}\prod_{j\ne i}\Delta_{c_j}(c_i\phi(x_2,z_i),x_2,z_j)
p(x_1)x_1\inverse\delta\(c_i\frac{\phi(x_2,z_i)}{x_1}\),\nonumber
\end{align}
where the existence of \eqref{eq:desc-temp5} follows from \eqref{eq:desc-temp3}.
The furthermore statement follows immediately from \eqref{eq:delta-func1-3}.
\end{proof}

\subsection{$\phi$-coordinated quasi modules for nonlocal vertex algebras}

Throughout this subsection, we assume that $\phi(x,z)=e^{zp(x)\frac{d}{dx}}x$ is an associate of the formal additive group $F_a$
such that $p(x)\ne 0$, or equivalently $\phi(x,z)\ne x$.
Note that under this assumption,  we have
$$f(\phi(x,z),x)\ne 0\   \   \   \mbox{  for any nonzero }f(x_1,x_2)\in \C[[x_1,x_2]].$$

We start with the notion of nonlocal vertex algebra (see Remark \ref{nlva-definitions}).

\begin{de}\label{def-nlva}
A {\em nonlocal vertex algebra} is a vector space $V$ equipped with a linear map
$$Y(\cdot,x): \  V\rightarrow (\te{End} V)[[x,x^{-1}]];\ v\mapsto Y(v,x)$$
and a distinguished vector ${\bf 1}\in V$, satisfying the conditions that for $u,v\in V$,
\begin{align}
Y(u,x)v\in V((x)),
\end{align}
\begin{align}
Y({\bf 1},x)v=v,\   \   \   \   Y(v,x){\bf 1}\in V[[x]]\   \mbox{ and }\  \lim_{x\rightarrow 0}Y(v,x){\bf 1}=v
\end{align}
and that for $u,v\in V$, there exists a nonnegative integer $k$ such that
\begin{align}\label{nlva-compatibilty}
(x_1-x_2)^{k}Y(u,x_1)Y(v,x_2)\in \te{Hom}(V,V((x_1,x_2)))
\end{align}
and
\begin{align}\label{nlva-associativity}
\left((x_1-x_2)^{k}Y(u,x_1)Y(v,x_2)\right)|_{x_1=x_2+x_0}=
x_0^{k}Y(Y(u,x_0)v,x_2).
\end{align}
\end{de}

\begin{rem}
{\em Let $A(x_1,x_2)\in \te{Hom}(W,W((x_1,x_2)))$ with $W$ a vector space. Then
$$A(x_1,x_2)|_{x_1=x_2+x_0}\  \mbox{ exists in }(\te{Hom}(W,W((x_2)))[[x_0]]$$
and
$$A(x_1,x_2)|_{x_1=x_0+x_2}\  \mbox{ exists in }(\te{Hom}(W,W((x_2)))[[x_0,x_0^{-1}]].$$
We have
\begin{align}
A(x_2+x_0,x_2)=\te{Res}_{x_1}x_1^{-1}\delta\left(\frac{x_2+x_0}{x_1}\right)A(x_1,x_2).
\end{align}
In view of this, under the condition (\ref{nlva-compatibilty}), we have
\begin{align}
&\left((x_1-x_2)^{k}Y(u,x_1)Y(v,x_2)\right)|_{x_1=x_2+x_0}\nonumber\\
=\ &\te{Res}_{x_1}x_1^{-1}\delta\left(\frac{x_2+x_0}{x_1}\right)\left((x_1-x_2)^{k}Y(u,x_1)Y(v,x_2)\right).
\end{align}}
\end{rem}


\begin{lem}\label{two-definitions}
Let $V$ be a nonlocal vertex algebra. Then the following
 {\em weak associativity} holds: For any $u,v,w\in V$, there exists a nonnegative integer $l$ such that
 \begin{align}\label{weak-assoc}
 (x_0+x_2)^{l}Y(u,x_0+x_2)Y(v,x_2)w= (x_0+x_2)^{l}Y(Y(u,x_0)v,x_2)w.
\end{align}
\end{lem}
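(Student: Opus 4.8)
The plan is to deduce weak associativity from the compatibility/associativity axioms \eqref{nlva-compatibilty}–\eqref{nlva-associativity} in Definition \ref{def-nlva}, passing between the two formal substitutions $x_1 = x_2 + x_0$ and $x_1 = x_0 + x_2$ by means of the formal delta function. Fix $u,v,w \in V$. By \eqref{nlva-compatibilty} there is a nonnegative integer $k$ with $A(x_1,x_2) := (x_1-x_2)^k Y(u,x_1)Y(v,x_2) \in \te{Hom}(V,V((x_1,x_2)))$, and by \eqref{nlva-associativity} we have $A(x_1,x_2)|_{x_1=x_2+x_0} = x_0^k Y(Y(u,x_0)v,x_2)$. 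Since $Y(u,x_0)v \in V((x_0))$, there is a nonnegative integer $m$ such that $x_0^m Y(u,x_0)v \in V[[x_0]]$; I will take $l = k + m$ (or any larger integer), so that $(x_0+x_2)^l Y(Y(u,x_0)v,x_2)w$ and $(x_0+x_2)^l Y(u,x_0+x_2)Y(v,x_2)w$ both lie in $(\te{Hom}(W,W((x_2))))[[x_0]]$ — i.e. involve only nonnegative powers of $x_0$.

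The key computational step is to expand both sides using $x_0^{-1}\delta$-function manipulations. On the one hand, applying $A(x_1,x_2)$ to $w$ and using the standard identity quoted in the Remark following Definition \ref{def-nlva},
\begin{align*}
\left(x_0^k Y(Y(u,x_0)v,x_2)w\right) = \te{Res}_{x_1}\, x_1^{-1}\delta\!\left(\frac{x_2+x_0}{x_1}\right)(x_1-x_2)^k Y(u,x_1)Y(v,x_2)w.
\end{align*}
Multiplying through by $(x_0+x_2)^{l-k} = (x_0+x_2)^m$ and using $(x_1-x_2)^k|_{x_1 = x_2+x_0} = x_0^k$ together with the delta-function substitution property $f(x_1)\delta(\tfrac{x_2+x_0}{x_1}) = f(x_2+x_0)\delta(\tfrac{x_2+x_0}{x_1})$, one rewrites the right-hand side of \eqref{weak-assoc} as $\te{Res}_{x_1}\, x_1^{-1}\delta(\tfrac{x_2+x_0}{x_1})(x_0+x_2)^m (x_1-x_2)^k Y(u,x_1)Y(v,x_2)w$, which (again by the delta substitution, now in reverse, turning $(x_0+x_2)^m$ into $x_1^m$) equals $\te{Res}_{x_1}\, x_1^{-1}\delta(\tfrac{x_2+x_0}{x_1})\, x_1^m (x_1-x_2)^k Y(u,x_1)Y(v,x_2)w$. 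On the other hand, the left-hand side of \eqref{weak-assoc} is precisely $\big(x_1^m (x_1-x_2)^k Y(u,x_1)Y(v,x_2)w\big)\big|_{x_1 = x_0+x_2}$, and since $x_1^m(x_1-x_2)^k Y(u,x_1)Y(v,x_2)w \in W((x_1,x_2))$, this substitution also equals $\te{Res}_{x_1}\, x_1^{-1}\delta(\tfrac{x_0+x_2}{x_1})\, x_1^m(x_1-x_2)^k Y(u,x_1)Y(v,x_2)w$. Thus both sides of \eqref{weak-assoc} are obtained from the same element $x_1^m(x_1-x_2)^k Y(u,x_1)Y(v,x_2)w \in \te{Hom}(W,W((x_1,x_2)))$ by a delta-function residue, and they agree because $A(x_1,x_2)|_{x_1=x_0+x_2}$ and $A(x_1,x_2)|_{x_1=x_2+x_0}$ denote the same element once we know the expression lies in $W((x_1,x_2))$.

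The main obstacle — really the only subtlety — is the bookkeeping of where each expression lives: the naive substitution $x_1 = x_0 + x_2$ a priori produces something in $W((x_2))[[x_0,x_0^{-1}]]$ (negative powers of $x_0$ allowed), whereas $x_1 = x_2 + x_0$ lands in $W((x_2))[[x_0]]$, and the equality of the two substituted series is only valid once one has multiplied by enough of a power of $(x_0+x_2)$ (equivalently of $x_1$) to kill the negative $x_0$-powers. This is exactly the role of the extra factor $x_1^m$ and the choice $l = k+m$; I would state carefully that for $A \in \te{Hom}(W,W((x_1,x_2)))$ one has $A|_{x_1 = x_2+x_0} = \te{Res}_{x_1} x_1^{-1}\delta(\tfrac{x_2+x_0}{x_1})A = \te{Res}_{x_1} x_1^{-1}\delta(\tfrac{x_0+x_2}{x_1})A = A|_{x_1 = x_0+x_2}$ as formal series (both sides make sense and coincide because $x_1^{-1}\delta(\tfrac{x_2+x_0}{x_1})$ and $x_1^{-1}\delta(\tfrac{x_0+x_2}{x_1})$ are the same object), and then feed in $A = x_1^m(x_1-x_2)^k Y(u,x_1)Y(v,x_2)w$. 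This is entirely standard formal-calculus; no genuinely hard point arises.
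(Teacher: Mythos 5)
Your plan is right — multiply by a power of $x_1$ and compare the two substitutions $x_1=x_2+x_0$ versus $x_1=x_0+x_2$ — but the central claim you use to do so is false, and your choice of $l$ is not the right one.

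The error is in the sentence asserting that $x_1^{-1}\delta\!\left(\frac{x_2+x_0}{x_1}\right)$ and $x_1^{-1}\delta\!\left(\frac{x_0+x_2}{x_1}\right)$ ``are the same object,'' and hence that $A|_{x_1=x_2+x_0}=A|_{x_1=x_0+x_2}$ for every $A\in\te{Hom}(W,W((x_1,x_2)))$. These delta functions are \emph{not} equal: writing $\delta(z)=\sum_{n\in\Z}z^n$, the $n<0$ terms require expanding $(x_2+x_0)^n$ (nonnegative powers of $x_0$) versus $(x_0+x_2)^n$ (nonnegative powers of $x_2$), and these binomial expansions differ. Concretely, the coefficient of $x_0^{-1}x_1x_2^0$ is $0$ in $\delta\!\left(\frac{x_2+x_0}{x_1}\right)$ but $1$ in $\delta\!\left(\frac{x_0+x_2}{x_1}\right)$. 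Correspondingly, for $A=\sum_{p\ge -P}a_p(x_2)x_1^p\in W((x_1,x_2))$ the two residues produce $\sum_p a_p(x_2)(x_2+x_0)^p$ versus $\sum_p a_p(x_2)(x_0+x_2)^p$, which disagree precisely on the terms with $p<0$. If your claim were true as stated, one could take $A=(x_1-x_2)^kY(u,x_1)Y(v,x_2)w$ directly and conclude with $l=0$, which is clearly too strong.

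The second, related problem is the choice $l=k+m$ with $m$ determined by $x_0^mY(u,x_0)v\in V[[x_0]]$. The quantity you actually need to control is the lowest power of $x_1$ appearing in $(x_1-x_2)^kY(u,x_1)Y(v,x_2)w\in V((x_1,x_2))$, and there is no reason this should be bounded by the $x_0$-order of $Y(u,x_0)v$. What the paper does — and what you should do — is choose $l$ so that $x_1^l(x_1-x_2)^kY(u,x_1)Y(v,x_2)w$ involves only nonnegative powers of $x_1$; once there are no negative powers of $x_1$, the two substitutions coincide trivially because $(x_2+x_0)^p=(x_0+x_2)^p$ for $p\ge 0$, and no delta-function manipulation is needed at all. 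After that, applying \eqref{nlva-associativity} to one side and direct substitution to the other gives $(x_2+x_0)^lx_0^kY(Y(u,x_0)v,x_2)w=(x_0+x_2)^lx_0^kY(u,x_0+x_2)Y(v,x_2)w$, from which \eqref{weak-assoc} follows. (You also mislabel the left side of \eqref{weak-assoc}: the substitution $x_1=x_0+x_2$ into $x_1^m(x_1-x_2)^kY(u,x_1)Y(v,x_2)w$ yields $(x_0+x_2)^mx_0^kY(u,x_0+x_2)Y(v,x_2)w$, not $(x_0+x_2)^{k+m}Y(u,x_0+x_2)Y(v,x_2)w$; this is a slip rather than the main problem, but it conceals the fact that the factor $x_0^k$ has to be cancelled at the end.)
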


\begin{proof}
Let $u,v,w\in V$. From definition, there exists $k\in \N$ such that (\ref{nlva-compatibilty}) holds. Then
$$(x_1-x_2)^{k}Y(u,x_1)Y(v,x_2)w\in V((x_1,x_2)).$$
Hence, there exists $l\in \N$ such that
$$x_1^{l}(x_1-x_2)^{k}Y(u,x_1)Y(v,x_2)w\in V[[x_1,x_2]][x_2^{-1}],$$
involving only nonnegative integer powers of $x_1$.
Then
$$\left(x_1^{l}(x_1-x_2)^{k}Y(u,x_1)Y(v,x_2)w\right)|_{x_1=x_2+x_0}
=\left(x_1^{l}(x_1-x_2)^{k}Y(u,x_1)Y(v,x_2)w\right)|_{x_1=x_0+x_2}.$$
Using this and (\ref{nlva-associativity}) we get
\begin{align*}
&(x_2+x_0)^{l}x_0^{k}Y(Y(u,x_0)v,x_2)w\\
=&\left(x_1^{l}(x_1-x_2)^{k}Y(u,x_1)Y(v,x_2)w\right)|_{x_1=x_2+x_0}\\
=&\left(x_1^{l}(x_1-x_2)^{k}Y(u,x_1)Y(v,x_2)w\right)|_{x_1=x_0+x_2}\\
=&(x_0+x_2)^{l}x_0^{k}Y(u,x_0+x_2)Y(v,x_2)w,
\end{align*}
which immediately yields (\ref{weak-assoc}).
\end{proof}

\begin{rem}\label{nlva-definitions}
{\em Note that a notion of nonlocal vertex algebra was defined in  \cite{li-nonlocal} in terms of
weak associativity (\ref{weak-assoc}).
In view of Lemma \ref{two-definitions}, the notion of nonlocal vertex algebra in the sense of
Definition \ref{def-nlva} is theoretically stronger than the other same named notion. }
\end{rem}

Let $V$ be a nonlocal vertex algebra. Define a linear operator $\der$ by $\der (v)= v_{-2}\vac$ for $v\in V$.
Then we have (see \cite{li-nonlocal}):
\begin{align}
[\der, Y(v,x)]=Y(\der (v),x)=\frac{d}{dx}Y(v,x)\   \   \te{for }v\in V.
\end{align}

What were called weak quantum vertex algebras in  \cite{li-nonlocal} form
a special class of nonlocal vertex algebras, where
a {\em weak quantum vertex algebra} is defined by replacing the
compatibility and weak associativity conditions in Definition \ref{def-nlva} with the condition that for any $u,v\in V$,
there exist
$$u^{(i)},v^{(i)}\in V,\ f_i(x)\in \C((x))\  \  (i=1,\dots,r)$$
such that
\begin{align}\label{S-Jacobi}
&x_0^{-1}\delta\left(\frac{x_1-x_2}{x_0}\right)Y(u,x_1)Y(v,x_2)\nonumber\\
&\hspace{1cm}-x_0^{-1}\delta\left(\frac{x_2-x_1}{-x_0}\right)\sum_{i=1}^{r}f_i(-x_2+x_1)Y(v^{(i)},x_2)Y(u^{(i)},x_1)\nonumber\\
=\ &x_1^{-1}\delta\left(\frac{x_2+x_0}{x_1}\right)Y(Y(u,x_0)v,x_2).
\end{align}

\begin{de}
Let $V$ be a nonlocal vertex algebra and let $\phi$ be an associate of $F_{a}$.
A {\em $\phi$-coordinated quasi $V$-module} is a vector space $W$ equipped with a linear map
$$Y_{W}(\cdot,x): \  V\rightarrow (\te{End} W)[[x,x^{-1}]];\ v\mapsto Y_{W}(v,x),$$
satisfying the conditions that
$$Y_{W}(u,x)w\in W((x))\   \   \    \te{for }u\in V,\ w\in W,$$
$$Y_{W}({\bf 1},x)=1_{W} \   \  (\te{the identity operator on }W),$$
and that for any $u,v\in V$, there exists a nonzero $f(x_1,x_2)\in \C[[x_1,x_2]]$ such that
\begin{align}
&f(x_1,x_2)Y_{W}(u,x_1)Y_{W}(v,x_2)\in \te{Hom}(W,W((x_1,x_2))),\\
&\(f(x_1,x_2)Y_{W}(u,x_1)Y_{W}(v,x_2)\)|_{x_1=\phi(x_2,z)}=f(\phi(x_2,z),x_2)Y_{W}(Y(u,z)v,x_2).\label{quasi-weak-assoc}
\end{align}
\end{de}

Let $W$ be a vector space, which is fixed for the rest of this section. Set
$$\E(W)=\te{Hom}(W,W((x))).$$
More generally, for any positive integer $r$, set
\begin{align}
\E^{(r)}(W)=\te{Hom}(W,W((x_1,x_2,\dots,x_r))).
\end{align}
An ordered sequence $\(a_1(x),\dots,a_r(x)\)$ in
$\E(W)$ is said to be {\em quasi-compatible} (see  \cite{li-nonlocal,li-g1}) if
there exists a nonzero series $f(x,y)\in \C[[x,y]]$ such that
\begin{align*}
  \(\prod_{1\le i<j\le r}f(x_i,x_j)\)a_1(x_1)\cdots a_r(x_r)
  \in\E^{(r)}(W).
\end{align*}
A subset $U$ of $\E(W)$ is said to be {\em quasi-compatible} if every finite ordered sequence in $U$ is quasi-compatible.

Suppose that $(\alpha(x),\beta(x))$ is a quasi-compatible pair in $\E(W)$. Define
$$\alpha(x)_{n}^{\phi}\beta(x)\in \E(W) \   \  \te{ for }n\in \Z$$
in terms of generating function
\begin{align}
Y_{\E}^{\phi}(\alpha(x),z)\beta(x)=\sum_{n\in \Z}\alpha(x)_{n}^{\phi}\beta(x) z^{-n-1}
\end{align}
by
\begin{align}
Y_{\E}^{\phi}(\alpha(x),z)\beta(x)=\iota_{x,z}(1/f(\phi(x,z),x))\(f(x_1,x)\alpha(x_1)\beta(x)\)|_{x_1=\phi(x,z)},
\end{align}
where $f(x_1,x_2)$ is any nonzero element of $\C[[x_1,x_2]]$ such that
\begin{align}
f(x_1,x_2)\alpha(x_1)\beta(x_2)\in \te{Hom}(W,W((x_1,x_2)))\  \  (=\E^{(2)}(W)).
\end{align}

A quasi-compatible subspace $U$ of $\E(W)$ is said to be {\em $Y_\E^\phi$-closed} if
\begin{align}
a(x)_{n}^{\phi}b(x)\in U
\end{align}
for all $a(x),b(x)\in U,\ n\in \Z$.

The following result was obtained in \cite{li-cmp}:

\begin{thm}\label{thm:abs-construct-non-h-adic}
Let $W$ be a vector space and  let $V$ be a $Y_\E^\phi$-closed quasi-compatible subspace of $\E(W)$
with $1_{W}\in V$.
Then $(V,Y_\E^\phi,1_{W})$ carries the structure of a nonlocal vertex algebra and
$W$ is a $\phi$-coordinated quasi $V$-module with $Y_W(\al(x),z)=\al(z)$ for $\al(x)\in V$.
On the other hand, for every quasi-compatible subset $U$ of $\E(W)$,
there exists a unique minimal $Y_\E^\phi$-closed quasi-compatible subspace $\<U\>_\phi$
that contains $1_W$ and $U$.
Furthermore, $(\<U\>_\phi,Y_\E^\phi,1_W)$ carries the structure of a nonlocal vertex
algebra and $W$ is a $\phi$-coordinated quasi $\<U\>_\phi$-module.
\end{thm}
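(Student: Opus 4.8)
The plan is to verify directly that $(V,Y_\E^\phi,1_W)$ satisfies the axioms of Definition \ref{def-nlva}, with essentially all of the work concentrated in the compatibility and associativity axioms \eqref{nlva-compatibilty}--\eqref{nlva-associativity}; once those are in place, the statement that $W$ is a $\phi$-coordinated quasi $V$-module will be nearly tautological, and the construction of $\<U\>_\phi$ will follow by the standard closure procedure.

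First I would dispose of the elementary axioms. Given a quasi-compatible pair $(\al(x),\be(x))$ in $\E(W)$ and a nonzero $f\in\C[[x_1,x_2]]$ with $f(x_1,x_2)\al(x_1)\be(x_2)\in\E^{(2)}(W)$, the substitution $x_1=\phi(x,z)$ carries $f(x_1,x)\al(x_1)\be(x)$ into $\E(W)[[z]]$ by Remark \ref{rem:about-phi1}, while multiplication by $\iota_{x,z}\bigl(1/f(\phi(x,z),x)\bigr)\in\C((x))((z))$ leaves us inside $\E(W)((z))$; a short calculation shows the result is independent of the choice of $f$, so $Y_\E^\phi(\al(x),z)\be(x)\in V((z))$. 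Specializing, $Y_\E^\phi(\al(x),z)1_W=\al(\phi(x,z))\in V[[z]]$ with limit $\al(x)$ as $z\to0$, and $Y_\E^\phi(1_W,z)$ is the identity operator; here the hypothesis $1_W\in V$ is used. This settles every requirement of Definition \ref{def-nlva} except \eqref{nlva-compatibilty} and \eqref{nlva-associativity}.

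The main step is to establish, for a quasi-compatible ordered triple $(\al,\be,\gamma)$ in $\E(W)$, a weak $\phi$-associativity for $Y_\E^\phi$, together with the stability of quasi-compatibility under the products $a(x)_m^\phi b(x)$. One chooses a single nonzero $f\in\C[[x_1,x_2]]$, which may be taken to be a product of linear factors $x_1-cx_2$, such that $f(x_1,x_2)f(x_1,x_3)f(x_2,x_3)\,\al(x_1)\be(x_2)\gamma(x_3)\in\E^{(3)}(W)$. Expanding the relevant inverse factors $f(\phi(x,z_1),\phi(x,z_2))^{-1}$, $f(\phi(x,z_1),x)^{-1}$ and $f(\phi(x,z_2),x)^{-1}$ in terms of the series $\Delta_c$ of Lemma \ref{lem:delta-func-tech-calculation4}, and using the product identities of Lemma \ref{lem:delta-func-prod-tech-calculation5} to pass between the orderings $\iota_{x,z_1,z_2}$ and $\iota_{x,z_2,z_1}$, one obtains a $k\in\N$ depending only on $\al$ and $\be$ such that $(z_1-z_2)^k\,Y_\E^\phi(\al,z_1)Y_\E^\phi(\be,z_2)\gamma\in\E(W)((z_1,z_2))$ and, after the substitution $z_1=z_2+z_0$ and an application of the functional equation $\phi(\phi(x,z_2),z_0)=\phi(x,z_2+z_0)$,
\[
\bigl((z_1-z_2)^k\,Y_\E^\phi(\al,z_1)Y_\E^\phi(\be,z_2)\gamma\bigr)\big|_{z_1=z_2+z_0}=z_0^k\,Y_\E^\phi\bigl(Y_\E^\phi(\al,z_0)\be,z_2\bigr)\gamma.
\]
The same bookkeeping shows that $\bigl(Y_\E^\phi(\al,z_0)\be,\gamma\bigr)$ is quasi-compatible, and more generally that adjoining to a quasi-compatible set all coefficients $a(x)_m^\phi b(x)$ yields again a quasi-compatible set. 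I expect this to be the main obstacle; the delicate points are arranging the quasi-compatibility data so that it does not involve the auxiliary variables $z_1,z_2$, controlling the passage between the two $\iota$-orderings through the delta-function identities of Lemmas \ref{lem:delta-func-tech-calculation4} and \ref{lem:delta-func-prod-tech-calculation5} (where the vanishing for $c\ne1$ in \eqref{eq:delta-func2} is what keeps the order of the pole along $z_1=z_2$ bounded), and matching coefficients on the two sides by means of the functional equation for $\phi$.

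Granting the main step, I would finish as follows. If $V$ is $Y_\E^\phi$-closed and quasi-compatible with $1_W\in V$, then applying the displayed identity with $\gamma$ ranging over $V$ gives exactly \eqref{nlva-compatibilty} and \eqref{nlva-associativity}, so $(V,Y_\E^\phi,1_W)$ is a nonlocal vertex algebra. That $W$ with $Y_W(\al(x),z)=\al(z)$ is a $\phi$-coordinated quasi $V$-module is then immediate from the definition of $Y_\E^\phi$: for $\al,\be\in V$ a quasi-compatibility polynomial $f$ for the pair satisfies $f(x_1,x_2)Y_W(\al,x_1)Y_W(\be,x_2)=f(x_1,x_2)\al(x_1)\be(x_2)\in\E^{(2)}(W)$, and the substitution $x_1=\phi(x_2,z)$ turns this into $f(\phi(x_2,z),x_2)\,(Y_\E^\phi(\al,z)\be)(x_2)=f(\phi(x_2,z),x_2)Y_W(Y_\E^\phi(\al,z)\be,x_2)$, which is \eqref{quasi-weak-assoc}. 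Finally, for an arbitrary quasi-compatible $U\subseteq\E(W)$, set $V_{(0)}=\C 1_W+\mathrm{span}\,U$ and, inductively, $V_{(n+1)}=V_{(n)}+\mathrm{span}\{a(x)_m^\phi b(x):a,b\in V_{(n)},\ m\in\Z\}$, and put $\<U\>_\phi=\bigcup_{n\ge0}V_{(n)}$. By construction $\<U\>_\phi$ is the smallest $Y_\E^\phi$-closed subspace of $\E(W)$ containing $1_W$ and $U$; by the stability statement of the main step each $V_{(n)}$, hence also the nested union $\<U\>_\phi$, is quasi-compatible; and the first part then shows $(\<U\>_\phi,Y_\E^\phi,1_W)$ is a nonlocal vertex algebra of which $W$ is a $\phi$-coordinated quasi module. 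Uniqueness of $\<U\>_\phi$ is clear, since any $Y_\E^\phi$-closed subspace of $\E(W)$ that contains $1_W$ and $U$ contains every $V_{(n)}$.
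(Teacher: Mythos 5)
This theorem is cited in the paper directly from \cite{li-cmp} with no proof supplied, so there is no internal proof to compare against; I can only judge your sketch on its own terms. Your overall plan -- verify the axioms of Definition \ref{def-nlva} directly, with the work concentrated in \eqref{nlva-compatibilty}--\eqref{nlva-associativity}, then build $\<U\>_{\phi}$ by an inductive closure -- is the right shape, and your handling of the vacuum axioms and of the final closure step is fine. The problem is in the central technical step.

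You assert that the quasi-compatibility series $f\in\C[[x_1,x_2]]$ for a triple ``may be taken to be a product of linear factors $x_1-cx_2$'' and then propose to run the $\Delta_c$ machinery of Lemmas \ref{lem:delta-func-tech-calculation4} and \ref{lem:delta-func-prod-tech-calculation5} to manage the two $\iota$-orderings. That assumption is unjustified and in general false: quasi-compatibility only provides \emph{some} nonzero $f\in\C[[x_1,x_2]]$, which need not be a polynomial, let alone a product of binomials $x_1-cx_2$ (consider $1+x_1x_2$ or $x_1^2+x_2^3$). The $\Delta_c$ identities of Section 2.1 are not a general device for inverting $f(\phi(x,z_1),\phi(x,z_2))$; they are built precisely for Theorem \ref{prop:tech-calculation5}, where the delta-function side of the hypothesis is \emph{already} expressed through $(p(x_2)\partial_{x_2})^i p(x_1)x_1^{-1}\delta(c_k x_2/x_1)$. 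The mechanism that actually bounds the pole along $z_1=z_2$ for a general $f$ is more elementary and linear-factor-free: $\C[[x_1,x_2]]$ is a UFD in which $x_1-x_2$ is prime, so $f=(x_1-x_2)^k g$ with $g(x,x)\ne 0$; then $g(\phi(x,z_1),\phi(x,z_2))$ is a unit of $\C((x))[[z_1,z_2]]$ (its constant term is $g(x,x)\in\C((x))^\times$), and $\phi(x,z_1)-\phi(x,z_2)=(z_1-z_2)F(x,z_1,z_2)$ with $F$ a unit, exactly as observed inside the proof of Lemma \ref{lem:delta-func-tech-calculation4}. That factorization, not a $\Delta_c$-expansion, is what isolates the $(z_1-z_2)$-singularity and makes the change of $\iota$-ordering and the substitution $z_1=z_2+z_0$ go through.

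A second point to be careful about: you claim the exponent $k$ you extract ``depends only on $\al$ and $\be$,'' but as set up, $f$ was chosen to make the \emph{triple} $(\al,\be,\gamma)$ compatible, so both $f$ and the $(x_1-x_2)$-order of $f$ a priori depend on $\gamma$. Showing that a single $k$ works for all $\gamma\in V$, which is what \eqref{nlva-compatibilty} requires, is precisely the gap between Definition \ref{def-nlva} and the weak-associativity formulation flagged in Remark \ref{nlva-definitions}; this deserves an explicit argument rather than an assertion.
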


\begin{de}
 In view of Theorem \ref{thm:abs-construct-non-h-adic},
 we call a $Y_\E^\phi$-closed quasi-compatible subspace of $\E(W)$ containing $1_{W}$
a {\em nonlocal vertex subalgebra of $\E(W)$.}
\end{de}

The following is an immediate consequence:

\begin{lem}\label{lem:tech-calculation0}
Let  $(\al(x),\be(x))$ be a quasi-compatible pair in $\E(W)$.
Then for any $f(x)\in\C((x))$, $(f(x)\al(x),\be(x))$ is quasi-compatible and
we have
\begin{align*}
  Y_\E^\phi\(f(x)\al(x),z\)\be(x)=f(\phi(x,z))Y_\E^\phi\(\al(x),z\)\be(x).
\end{align*}
\end{lem}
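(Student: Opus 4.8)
The plan is to reduce the statement about $f(x)\alpha(x)$ directly to the definition of $Y_\E^\phi$ together with Remark \ref{rem:about-phi1}. First I would observe that since $(\alpha(x),\beta(x))$ is quasi-compatible, there is a nonzero $g(x_1,x_2)\in\C[[x_1,x_2]]$ with $g(x_1,x_2)\alpha(x_1)\beta(x_2)\in\E^{(2)}(W)$. For $f(x)\in\C((x))$, pick $m\in\N$ large enough that $x^m f(x)\in\C[[x]]$, and note that $h(x_1,x_2):=x_1^m g(x_1,x_2)$ is still a nonzero element of $\C[[x_1,x_2]]$ and that $h(x_1,x_2)\big(f(x_1)\alpha(x_1)\big)\beta(x_2)=\big(x_1^m f(x_1)\big)\cdot g(x_1,x_2)\alpha(x_1)\beta(x_2)\in\E^{(2)}(W)$, since $x_1^m f(x_1)\in\C[[x_1]]$ acts by a well-defined multiplication on $\E^{(2)}(W)=\te{Hom}(W,W((x_1,x_2)))$. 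This shows $(f(x)\alpha(x),\beta(x))$ is quasi-compatible, and it also exhibits $h$ as an admissible choice of the ``$f(x_1,x_2)$'' in the definition of $Y_\E^\phi$ for this pair.

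Next I would compute $Y_\E^\phi(f(x)\alpha(x),z)\beta(x)$ using $h$ as the admissible series. By definition this is
\begin{align*}
\iota_{x,z}\big(1/h(\phi(x,z),x)\big)\Big(h(x_1,x)\,f(x_1)\alpha(x_1)\beta(x)\Big)\Big|_{x_1=\phi(x,z)}.
\end{align*}
Now $h(x_1,x)f(x_1)=x_1^m f(x_1)\,g(x_1,x)$, so $h(x_1,x)f(x_1)\alpha(x_1)\beta(x)=\big(x_1^m f(x_1)\big)\,g(x_1,x)\alpha(x_1)\beta(x)$, and substituting $x_1=\phi(x,z)$ — which is the ring embedding $\pi_\phi$ (extended to $\pi_\phi^*$) of Remark \ref{rem:about-phi1}, hence multiplicative — yields $\big(\phi(x,z)^m f(\phi(x,z))\big)\cdot\big(g(x_1,x)\alpha(x_1)\beta(x)\big)\big|_{x_1=\phi(x,z)}$. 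Likewise $h(\phi(x,z),x)=\phi(x,z)^m g(\phi(x,z),x)$, and since multiplication by the unit $\phi(x,z)^m$ commutes with everything, the factors $\phi(x,z)^m$ cancel and $\iota_{x,z}\big(1/h(\phi(x,z),x)\big)=\iota_{x,z}\big(1/g(\phi(x,z),x)\big)\cdot\iota_{x,z}\big(\phi(x,z)^{-m}\big)$. Collecting, everything reorganizes into
\begin{align*}
f(\phi(x,z))\cdot\iota_{x,z}\big(1/g(\phi(x,z),x)\big)\Big(g(x_1,x)\alpha(x_1)\beta(x)\Big)\Big|_{x_1=\phi(x,z)}
= f(\phi(x,z))\,Y_\E^\phi(\alpha(x),z)\beta(x),
\end{align*}
which is exactly the claim, with $f(\phi(x,z))\in\C((x))[[z]]\subset\C((x))((z))$ well-defined by Remark \ref{rem:about-phi1} (for $f$ a polynomial it is automatic; for general $f\in\C((x))$ one uses the extension $\pi_\phi^*$).

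The only real points requiring care — and the ``main obstacle,'' though it is mild — are bookkeeping ones: verifying that the choice of admissible $h$ does not matter (this is implicit in the well-definedness of $Y_\E^\phi$ already established in Theorem \ref{thm:abs-construct-non-h-adic}, so one may simply pick the convenient $h$), and making sure the scalar series $f(x_1)$, which may involve negative powers of $x_1$, is handled correctly under the substitution $x_1\mapsto\phi(x,z)$ and under the multiplications by $\phi(x,z)^{m}$ and $1/g(\phi(x,z),x)$ in the field $\C((x))((z))$ — all of which is legitimate precisely because $\pi_\phi^*$ of Remark \ref{rem:about-phi1} is a field embedding, so all these rational expressions in $\phi(x,z)$ multiply as expected. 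No deeper idea is needed; the statement is essentially a compatibility of $Y_\E^\phi$ with scalar multiplication on the first slot, following formally from the definition.
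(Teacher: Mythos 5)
Your proof is correct. The paper does not give a proof of this lemma at all — it is labeled ``an immediate consequence,'' with Theorem \ref{thm:abs-construct-non-h-adic} and the defining formula for $Y_\E^\phi$ as the implicit justification — so there is no paper proof to compare against, but what you wrote is precisely the expected unraveling: clear the pole of $f$ by multiplying by $x_1^m$, absorb $x_1^m$ into the compatibility series to get the admissible $h(x_1,x_2)=x_1^m g(x_1,x_2)$ for the pair $\bigl(f(x)\alpha(x),\beta(x)\bigr)$, then cancel the $\phi(x,z)^{\pm m}$ factors. One small point of precision: when you pass the scalar $x_1^m f(x_1)$ through the substitution $x_1=\phi(x,z)$ on the $\te{Hom}(W,W((x_1,x)))$-valued object $g(x_1,x)\alpha(x_1)\beta(x)$, what you are actually using is not literally the ring embedding $\pi_\phi$ of Remark \ref{rem:about-phi1} (that is defined only on $\C((x_1,x))$), but the analogous multiplicativity of the substitution on $\te{Hom}$-valued series, which holds because the substitution is $e^{zp(x_1)\partial_{x_1}}|_{x_1=x}$ and $p(x_1)\partial_{x_1}$ is a derivation, so the iterated Leibniz rule applies; this is a routine verification and the same fact is used silently in the paper's proof of Lemma \ref{lem:tech-calculation1}.
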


We also have the following result (cf. \cite[Proposition 4.5]{li-cmp}):

\begin{lem}\label{lem:tech-calculation1}
Let  $V$ be a nonlocal vertex subalgebra of $\E(W)$.
Suppose that
\begin{align*}
  \al_i(x),\be_i(x)\in V,\   g_i(x_1,x_2)\in\C_\ast((x_1,x_2))\  \   (i=1,2,\dots,r)
\end{align*}
such that
\begin{align*}
  \sum_{i=1}^r\iota_{x_1,x_2}(g_i(x_1,x_2))\al_i(x_1)\be_i(x_2)\in\te{Hom}(W,W((x_1,x_2))).
\end{align*}
Then
\begin{align}\label{sum-equality}
 & \sum_{i=1}^r\iota_{x,z}(g_i\(\phi(x,z),x\))Y_\E^\phi\(\al_i(x),z\)\be_i(x)\nonumber\\
=&\(\sum_{i=1}^r\iota_{x_1,x}(g_i(x_1,x))\al_i(x_1)\be_i(x)\)\Bigg|_{x_1=\phi(x,z)}.
\end{align}
\end{lem}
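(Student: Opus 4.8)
The plan is to reduce the statement to the single-term case $r=1$, which is essentially the definition of $Y_\E^\phi$, and then handle the general case by a limiting/polynomial-clearing argument. First I would observe that each $g_i(x_1,x_2)\in\C_\ast((x_1,x_2))$ can be written as $g_i(x_1,x_2)=h_i(x_1,x_2)/f(x_1,x_2)$ for a common nonzero denominator $f(x_1,x_2)\in\C[[x_1,x_2]]$ (clear denominators over all $i$) with $h_i\in\C((x_1,x_2))$; enlarging $f$ if necessary, I may also assume $f(x_1,x_2)\al_i(x_1)\be_i(x_2)\in\te{Hom}(W,W((x_1,x_2)))$ for every $i$, since $V$ is quasi-compatible and there are finitely many pairs. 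With this common $f$, the hypothesis says $\sum_i \iota_{x_1,x_2}(h_i(x_1,x_2))\,\al_i(x_1)\be_i(x_2)$ lies in $f(x_1,x_2)\cdot\te{Hom}(W,W((x_1,x_2)))$, i.e.\ $A(x_1,x_2):=\sum_i h_i(x_1,x_2)\al_i(x_1)\be_i(x_2)$, a priori in $\te{Hom}(W,W((x_1))((x_2)))$, is divisible by $f$ inside $\te{Hom}(W,W((x_1,x_2)))$.

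Next I would apply the substitution $x_1=\phi(x,z)$. By Remark \ref{rem:about-phi1}, the map $\pi_\phi^*$ is a field embedding $\C_\ast((x_1,x))\to\C((x))((z))$, so $\iota_{x,z}(g_i(\phi(x,z),x)) = \iota_{x,z}(h_i(\phi(x,z),x))\cdot\iota_{x,z}(1/f(\phi(x,z),x))$, and the right-hand side of \eqref{sum-equality} is $\iota_{x,z}(1/f(\phi(x,z),x))\cdot\big(A(x_1,x)\big)|_{x_1=\phi(x,z)}$, where the substitution of $A\in f\cdot\te{Hom}(W,W((x_1,x_2)))$ into $x_1=\phi(x_2,z)$ is legitimate exactly because $A/f$ lies in $\te{Hom}(W,W((x_1,x_2)))$ and $\phi(x,z)\ne x$. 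On the left-hand side, using Lemma \ref{lem:tech-calculation0} to pull the scalar $\iota_{x,z}(h_i(\phi(x,z),x))$ through $Y_\E^\phi$ — here I would first decompose $h_i$ further, writing $h_i(x_1,x_2) = \sum_j c_{ij}(x_1) d_{ij}(x_2)$ is not available in general, so instead I note that $Y_\E^\phi(\al_i(x),z)\be_i(x) = \iota_{x,z}(1/f(\phi(x,z),x))\cdot\big(f(x_1,x)\al_i(x_1)\be_i(x)\big)|_{x_1=\phi(x,z)}$ by definition — the left-hand side of \eqref{sum-equality} becomes $\iota_{x,z}(1/f(\phi(x,z),x))\sum_i \big(h_i(x_1,x)\al_i(x_1)\be_i(x)\big)|_{x_1=\phi(x,z)} = \iota_{x,z}(1/f(\phi(x,z),x))\cdot\big(A(x_1,x)\big)|_{x_1=\phi(x,z)}$, matching the right-hand side. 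The point is that the common factor $\iota_{x,z}(1/f(\phi(x,z),x))$ and the common substitution $x_1=\phi(x,z)$ are linear operations, so they commute with the finite sum over $i$.

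The step I expect to be the main obstacle is justifying that the formal substitution $x_1=\phi(x,z)$ really does commute with the finite sum in the way claimed — more precisely, that $\big(\sum_i h_i(x_1,x)\al_i(x_1)\be_i(x)\big)|_{x_1=\phi(x,z)}$ equals $\sum_i \big(h_i(x_1,x)\al_i(x_1)\be_i(x)\big)|_{x_1=\phi(x,z)}$ as elements of $\E(W)((z))$, since the individual terms $h_i(x_1,x)\al_i(x_1)\be_i(x)$ need not be divisible by $f$ and hence their individual substitutions a priori live only in the larger space $\te{Hom}(W,W((x)))((z))$ after multiplying by $\iota_{x,z}(1/f(\phi(x,z),x))$; one must check that no cancellation of infinitely many terms is being mishandled. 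I would resolve this by working at a fixed but arbitrary $w\in W$: then each $\al_i(x_1)\be_i(x)w$ lies in $W((x_1))((x))$, only finitely many $\al_i\be_i$ contribute to any given coefficient once we also multiply by $f$, and the substitution $x_1=\phi(x,z)$ is continuous in the relevant $z$-adic sense (by Remark \ref{rem:about-phi1}, applied coefficient-wise in $W$). This makes the interchange of sum and substitution legitimate, completing the proof.
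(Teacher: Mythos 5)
Your plan is the same in spirit as the paper's: clear denominators, reduce to the definition of $Y_\E^\phi$, move the substitution through the finite sum by linearity, and then cancel the common scalar. The paper implements this by multiplying both sides of \eqref{sum-equality} by $g(\phi(x,z),x)$ for a single $g(x_1,x_2)\in\C[[x_1,x_2]]$ making all pairs $(\al_i,\be_i)$ compatible, performing the substitution only on elements of $\te{Hom}(W,W((x_1,x_2)))$, and then dividing by $g(\phi(x,z),x)$ at the very end.

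The genuine problem in your write-up is the intermediate expression $\bigl(h_i(x_1,x)\al_i(x_1)\be_i(x)\bigr)\big|_{x_1=\phi(x,z)}$ and the ``$z$-adic continuity'' heuristic you use to defend it. Since $h_i(x_1,x_2)\in\C((x_1,x_2))$ and $\al_i(x_1)\be_i(x_2)$ is a priori only in $\te{Hom}(W,W((x_1))((x_2)))$, the product $\iota_{x_1,x_2}(h_i)\,\al_i(x_1)\be_i(x_2)$ lives in $\te{Hom}(W,W((x_1))((x_2)))$ and generally \emph{not} in $\te{Hom}(W,W((x_1,x_2)))$; and the map $B(x_1,x_2)\mapsto B(\phi(x_2,z),x_2)$ is simply not defined on $W((x_1))((x_2))$. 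A concrete failure: for $B(x_1,x_2)=\sum_{n\ge 0}x_1^{-n}x_2^n$, the putative $B(\phi(x_2,z),x_2)=\sum_{n\ge 0}\bigl(x_2/\phi(x_2,z)\bigr)^n$ has no well-defined coefficients because $x_2/\phi(x_2,z)\in 1+z\C((x_2))[[z]]$, so every fixed power of $z$ receives infinitely many contributions. Thus there is no ``continuous extension'' of the substitution to $W((x_1))((x))$ to appeal to, and the interchange cannot be justified coefficient-wise in the way you suggest.

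The repair is exactly the device the paper uses and which your own bookkeeping already makes available: never perform the substitution on $h_i(x_1,x)\al_i(x_1)\be_i(x)$ by itself. Writing the $i$-th summand on the left of \eqref{sum-equality} as
$\iota_{x,z}\bigl(1/f(\phi(x,z),x)\bigr)^2\,\iota_{x,z}\bigl(h_i(\phi(x,z),x)\bigr)\,\bigl(f(x_1,x)\al_i(x_1)\be_i(x)\bigr)\big|_{x_1=\phi(x,z)}$,
every substitution acts on $f(x_1,x_2)\al_i(x_1)\be_i(x_2)\in\te{Hom}(W,W((x_1,x_2)))$. Since $h_i\in\C((x_1,x_2))$ and $h_i\,f\al_i\be_i\in\te{Hom}(W,W((x_1,x_2)))$, one has $\iota_{x,z}(h_i(\phi(x,z),x))\,\bigl(f\al_i\be_i\bigr)(\phi(x,z),x)=\bigl(h_i f\al_i\be_i\bigr)(\phi(x,z),x)$, and the substitution is $\C$-linear on $\te{Hom}(W,W((x_1,x_2)))$, so the finite sum becomes $\bigl(f A\bigr)(\phi(x,z),x)=f(\phi(x,z),x)\,A(\phi(x,z),x)$. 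Dividing back by $f(\phi(x,z),x)^2$ gives $A(\phi(x,z),x)/f(\phi(x,z),x)$, which is your right-hand side. This closes the gap; it is the same cancellation step the paper carries out after multiplying through by $g(\phi(x,z),x)$.
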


\begin{proof} It is similar  to the proof in \cite{li-cmp}.
As $V$ is a quasi-compatible subspace of $\E(W)$, there exists  a nonzero series
$g(x_1,x_2)\in\C[[x_1,x_2]]$ such that
\begin{align*}
  g(x_1,x_2)\al_i(x_1)\be_i(x_2)\in\te{Hom}(W,W((x_1,x_2)))\  \   \,\te{ for }i=1,2,\dots,r.
\end{align*}
From the definition of $Y_\E^\phi$ we have
\begin{align*}
g\(\phi(x,z),x\)Y_\E^\phi\(\al_i(x),z\)\be_i(x)=\(g(x_1,x)\al_i(x_1)\be_i(x)\)|_{x_1=\phi(x,z)}
\end{align*}
for $i=1,2,\dots,r$. Then we get
\begin{align*}
 & g\(\phi(x,z),x\)\sum_{i=1}^r\iota_{x,z}(g_i\(\phi(x,z),x\))Y_\E^\phi\(\al_i(x),z\)\be_i(x)\\
 =&\sum_{i=1}^r\iota_{x,z}(g_i\(\phi(x,z),x\))\(g(x_1,x)\al_i(x_1)\be_i(x)\)|_{x_1=\phi(x,z)}\\
 =&\(g(x_1,x)\sum_{i=1}^r\iota_{x_1,x}(g_i(x_1,x))\al_i(x_1)\be_i(x)\)\Bigg|_{x_1=\phi(x,z)}\\
 =&g\(\phi(x,z),x\)\(\sum_{i=1}^r\iota_{x_1,x}(g_i(x_1,x))\al_i(x_1)\be_i(x)\)\Bigg|_{x_1=\phi(x,z)}.
\end{align*}
Noticing that both $\sum_{i=1}^r\iota_{x,z}(g_i\(\phi(x,z),x\))Y_\E^\phi\(\al_i(x),z\)\be_i(x)$
and
\begin{align*}
  \(\sum_{i=1}^r\iota_{x_1,x}(g_i(x_1,x))\al_i(x_1)\be_i(x)\)\Bigg|_{x_1=\phi(x,z)}\  \te{ lie in }\(\E(W)\)((z)),
\end{align*}
which  is a vector space over $\C((x))((z))$, and that $g(\phi(x,z),x)$ is a nonzero element of  $\C((x))((z))$,
we immediately get (\ref{sum-equality}).
\end{proof}

On the other hand,  we have:

\begin{lem}\label{lem:tech-calculation2}
Let $V$ be a nonlocal vertex subalgebra of $\E(W)$.
Suppose that
\begin{align*}
  \al_i(x),\be_i(x)\in V, \   g_i(x_1,x_2)\in\C_\ast((x_1,x_2))\  \   (i=1,2,\dots,r)
\end{align*}
such that
\begin{align*}
  \sum_{i=1}^r\iota_{x_1,x_2}(g_i(x_1,x_2))\al_i(x_1)\be_i(x_2)\in\te{Hom}(W,W((x_1,x_2))).
\end{align*}
Then
\begin{eqnarray*}
 &&\sum_{i=1}^r\iota_{x,x_1,x_2}(g_i\(\phi(x,x_1),\phi(x,x_2)\))
  Y_\E^\phi\(\al_i(x),x_1\)Y_\E^\phi\(\be_i(x),x_2\)\nonumber\\
  &&\   \   \in \te{Hom}\(V,V((x))((x_1,x_2))\).
\end{eqnarray*}
\end{lem}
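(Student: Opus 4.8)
The plan is to reduce the multi-variable statement to the two-variable situation handled by Lemma~\ref{lem:tech-calculation1}, applied twice. The starting point is that $V$ is a quasi-compatible subspace of $\E(W)$, so there is a single nonzero $g(x_1,x_2)\in\C[[x_1,x_2]]$ such that $g(x_1,x_2)g(x_1,x_3)g(x_2,x_3)\al_i(x_1)\be_i(x_2)\gamma(x_3)$ lies in $\E^{(3)}(W)$ for all the relevant triples; more to the point, for the pair $(\al_i,\be_i)$ we may assume $g(x_1,x_2)\al_i(x_1)\be_i(x_2)\in\te{Hom}(W,W((x_1,x_2)))$ for every $i$. Since by hypothesis $\sum_i\iota_{x_1,x_2}(g_i)\al_i(x_1)\be_i(x_2)\in\te{Hom}(W,W((x_1,x_2)))$, I can write $g_i = h_i/g$ with $h_i\in\C_\ast((x_1,x_2))$ and arrange matters so that the two-variable results apply to each $g(x_1,x_2)\iota_{x_1,x_2}(g_i(x_1,x_2))\al_i(x_1)\be_i(x_2)$, which lies in $\te{Hom}(W,W((x_1,x_2)))$.

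The core computation is then the following. First I would apply $Y_\E^\phi(\cdot,x_2)$ to the vacuum-like slot and use Lemma~\ref{lem:tech-calculation1} with $g$ in place of the generic compatibility series to get, for each fixed $i$,
\begin{align*}
\iota_{x,x_2}(g(\phi(x,x_2),x))\,Y_\E^\phi(\al_i'(x),x_2)\be_i(x)
=\big(g(x_1,x)\al_i'(x_1)\be_i(x)\big)\big|_{x_1=\phi(x,x_2)},
\end{align*}
and similarly produce $Y_\E^\phi(\al_i(x),x_1)Y_\E^\phi(\be_i(x),x_2)$ by substituting $x_1=\phi(x,x_1)$ into the first factor. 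The key observation driving the whole argument is Remark~\ref{rem:about-phi2}: the map $f(x_1,x_2)\mapsto f(\phi(x,x_1),\phi(x,x_2))$ is a ring embedding of $\C((x_1,x_2))$ into $\C((x))[[x_1,x_2]]$, extending to a field embedding on $\C_\ast((x_1,x_2))$. This tells me that $g(\phi(x,x_1),\phi(x,x_2))$ is a nonzero element of the fraction field of $\C((x))[[x_1,x_2]]$, and in fact because $g\in\C[[x_1,x_2]]$ with $g\ne 0$ it becomes — after clearing a suitable power of $x_1 x_2$ — a nonzero element of $\C((x))[[x_1,x_2]]$ times a unit. Multiplying the desired expression $\sum_i\iota_{x,x_1,x_2}(g_i(\phi(x,x_1),\phi(x,x_2)))Y_\E^\phi(\al_i(x),x_1)Y_\E^\phi(\be_i(x),x_2)$ through by the nonzero scalar $g(\phi(x,x_1),\phi(x,x_2))$ and applying Lemma~\ref{lem:tech-calculation1} (in the iterated form, treating the second substitution first and then the first) converts it into
\begin{align*}
\Big(\sum_i\iota_{x_1,x_2,x_3}\big(g(x_1,x_3)g(x_2,x_3)\,g_i(x_1,x_2)\big)\al_i(x_1)\be_i(x_2)\Big)
\end{align*}
evaluated at the appropriate substitutions, which manifestly lands in $\te{Hom}(W,W((x))((x_1,x_2)))$ because the parenthesized expression already lies in $\te{Hom}(W,W((x_1,x_2)))$ by hypothesis and the substitution $\pi_\phi$-type maps send $\C((x_1,x_2))$-coefficient objects into $\C((x))((x_1))((x_2))$ (or the $(x_2,x_1)$-ordered version). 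Dividing back out by the nonzero scalar $g(\phi(x,x_1),\phi(x,x_2))$, which is invertible in the coefficient field, preserves membership in $\te{Hom}(V,V((x))((x_1,x_2)))$, since that space is a module over $\C((x))((x_1))((x_2))$.

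The point requiring the most care — and the one I expect to be the main obstacle — is bookkeeping the \emph{order} of the formal variables. The statement asks for membership in $\te{Hom}(V,V((x))((x_1,x_2)))$, which is symmetric in $x_1,x_2$ in its final form, but the two-variable Lemma~\ref{lem:tech-calculation1} naturally produces objects in $(\E(W))((z))$, i.e.\ expansions with $z$ outermost. One must therefore verify that applying $Y_\E^\phi(\cdot,x_1)$ after $Y_\E^\phi(\cdot,x_2)$ yields something in $(\E(W))((x_2))((x_1))$ that actually truncates to lie in $V((x))((x_1,x_2))$ — this is exactly the content of quasi-compatibility of the triple, combined with the fact that $Y_\E^\phi$ is closed on $V$ (so each $Y_\E^\phi(\al_i(x),x_1)Y_\E^\phi(\be_i(x),x_2)$ acting on an element of $V$ already produces something in $V((x))$ in the $x$-variable). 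I would handle this by choosing $g$ at the outset to be a quasi-compatibility series for the \emph{triple} (the two vertex operators plus the element of $V$ they act on), so that the clearing-of-denominators step genuinely lands in $\E^{(3)}(W)$ and the truncation to $((x))((x_1,x_2))$ is automatic; the rest is then the routine application of Remark~\ref{rem:about-phi2} and Lemma~\ref{lem:tech-calculation1} as above.
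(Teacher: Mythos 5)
Your approach has a genuine gap, concentrated in the very last step, and it is precisely the step you flagged as ``requiring the most care.''

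You multiply by the scalar $g(\phi(x,x_1),\phi(x,x_2))$, land (after the substitution) in a space you can control, and then want to divide back out, asserting that $\te{Hom}(V,V((x))((x_1,x_2)))$ ``is a module over $\C((x))((x_1))((x_2))$.'' That assertion is false. The space $V((x))((x_1,x_2))$ is a module over $\C((x))((x_1,x_2))$, which is a strictly smaller ring: it is \emph{not} stable under multiplication by arbitrary elements of $\C((x))((x_1))((x_2))$. Concretely, if $g(x_1,x_2)$ vanishes on the diagonal (say $g$ has a factor $(x_1-x_2)^m$, which cannot be ruled out for a quasi-compatibility series), then $g(\phi(x,x_1),\phi(x,x_2))$ has a factor $(x_1-x_2)^m$, and $(x_1-x_2)^{-1}$ does not lie in $\C((x))((x_1,x_2))$ (the $\iota_{x_2,x_1}$-expansion $-\sum_{n\ge 0}x_1^n x_2^{-n-1}$ has powers of $x_2$ unbounded below). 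Thus knowing $(x_1-x_2)^m A\in V((x))((x_1,x_2))$ does not imply $A\in V((x))((x_1,x_2))$ when $A$ is a priori only in $V((x))((x_1))((x_2))$; the elementary counterexample $A=\iota_{x_2,x_1}\frac{1}{x_1-x_2}$ already shows this for $m=1$. Your parenthetical ``after clearing a suitable power of $x_1x_2$'' does not repair this, because the obstruction is along $x_1=x_2$, not along $x_1x_2=0$.

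What your proposal is missing is the weak associativity of the nonlocal vertex algebra $V$, which is the real engine of the paper's proof and which you never invoke. The paper's argument does not try to control the iterated product $Y_\E^\phi(\al_i(x),x_1)Y_\E^\phi(\be_i(x),x_2)\theta(x)$ directly by clearing and un-clearing a scalar. Instead, one fixes $\theta(x)\in V$, uses weak associativity (Lemma~\ref{two-definitions}) to rewrite $(x_0+x_2)^kY_\E^\phi(\al_i(x),x_0+x_2)Y_\E^\phi(\be_i(x),x_2)\theta(x)$ as $(x_0+x_2)^kY_\E^\phi\big(Y_\E^\phi(\al_i(x),x_0)\be_i(x),x_2\big)\theta(x)$, and \emph{then} applies Lemma~\ref{lem:tech-calculation1} to the hypothesis to conclude that the single-variable expression $\sum_i g_i(\phi(x,x_0),x)Y_\E^\phi(\al_i(x),x_0)\be_i(x)$ lies in $V((x))[[x_0]]$, i.e.\ is truncated from below in $x_0$. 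Feeding that into $Y_\E^\phi(\cdot,x_2)$ and substituting $x_0=x_1-x_2$ yields membership in $V((x))((x_1,x_2))$. Without this associativity step there is no mechanism that trades the a priori $((x_1))((x_2))$-order for the symmetric $((x_1,x_2))$-truncation; ``quasi-compatibility of the triple'' and ``$Y_\E^\phi$-closedness of $V$,'' which you cite, do not by themselves produce it.

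One more minor caution: the hypothesis is that the \emph{sum} $\sum_i\iota_{x_1,x_2}(g_i)\al_i(x_1)\be_i(x_2)$ lies in $\te{Hom}(W,W((x_1,x_2)))$, while the individual summands need not; so any term-by-term clearing of denominators has to be arranged to apply to the whole sum at once, as the paper does via the uniform series $g$.
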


\begin{proof} As $V$ is a quasi-compatible subspace of $\E(W)$, from the definition of $Y_\E^\phi$,
 there exists a positive integer $k$ such that
\begin{align*}
 x_0^k Y_\E^\phi\(\al_i(x),x_0\)\be_i(x)\in V[[x_0]]
\end{align*}
for all $1\le i\le r$.
Let $\theta(x)\in V$ be arbitrarily fixed. By the weak associativity of the nonlocal vertex algebra $V$,
we can update $k$ so that we also have
\begin{align}\label{eq:weak-asso}
  &(x_0+x_2)^k Y_\E^\phi\(\al_i(x),x_0+x_2\)Y_\E^\phi\(\be_i(x),x_2\)\theta(x)\nonumber\\
  =&(x_0+x_2)^k
  Y_\E^\phi\(Y_\E^\phi\(\al_i(x),x_0\)\be_i(x),x_2\)\theta(x),
\end{align}
which implies that the common quantity on both sides lies in $V((x_0,x_2))$.  Then
\begin{align*}
  &x_1^k(x_1-x_2)^kY_\E^\phi\(\al_i(x),x_1\)Y_\E^\phi\(\be_i(x),x_2\)\theta(x)\\
=&\((x_0+x_2)^kx_0^k
  Y_\E^\phi\(Y_\E^\phi\(\al_i(x),x_0\)\be_i(x),x_2\)\theta(x)\)|_{x_0=x_1-x_2}
\end{align*}
for all $1\le i\le r$.
Hence, we have
\begin{align}\label{eq:weak-asso-temp1}
&  x_1^k(x_1-x_2)^k \sum_{i=1}^r \iota_{x,x_1,x_2}(g_i(\phi(x,x_1),\phi(x,x_2)))
  Y_\E^\phi\(\al_i(x),x_1\)Y_\E^\phi\(\be_i(x),x_2\)\theta(x)\nonumber\\
  =&\Bigg(x_0^k (x_0+x_2)^k\sum_{i=1}^r \iota_{x,x_2,x_0}(g_i(\phi(x,x_0+x_2),\phi(x,x_2)))\nonumber\\
&\qquad\qquad\times
  Y_\E^\phi\(Y_\E^\phi\(\al_i(x),x_0\)\be_i(x),x_2\)\theta(x)\Bigg)\Bigg|_{x_0=x_1-x_2}.
\end{align}

Recall again that  $g_i(\phi(x,x_0),x)$ exists in $\C((x))((x_0))$.
Then $g_i(\phi(\phi(x,x_2),x_0),\phi(x,x_2))$
exists in $\C((x))[[x_2]]((x_0))\subset \C((x))((x_0,x_2))$.
From Lemma \ref{lem:tech-calculation0} and \eqref{eq:weak-asso}, we have
\begin{align*}
&(x_0+x_2)^k  Y_\E^\phi\(g_i(\phi(x,x_0),x)Y_\E^\phi\(\al_i(x),x_0\)\be_i(x),x_2\)
    \theta(x)\\
=&(x_0+x_2)^k g_i(\phi(\phi(x,x_2),x_0),\phi(x,x_2))
    Y_\E^\phi\(Y_\E^\phi\(\al_i(x),x_0\)\be_i(x),x_2\)\theta(x),
\end{align*}
which implies that the common quantity on both sides lies in $V((x))((x_0,x_2))$.
Then
\begin{align*}
&(x_0+x_2)^k  Y_\E^\phi\(g_i(\phi(x,x_0),x)Y_\E^\phi\(\al_i(x),x_0\)\be_i(x),x_2\)
    \theta(x)\\
=&(x_0+x_2)^k g_i(\phi(x,x_0+x_2),\phi(x,x_2))
    Y_\E^\phi\(Y_\E^\phi\(\al_i(x),x_0\)\be_i(x),x_2\).
\end{align*}
By Lemma \ref{lem:tech-calculation1}, we have
\begin{align*}
&\sum_{i=1}^rg_i(\phi(x,x_0),x)Y_\E^\phi\(\al_i(x),x_0\)\be_i(x)\\
=&\(\sum_{i=1}^rg_i(x_1,x)\al_i(x_1)\be_i(x)\)\Bigg|_{x_1=\phi(x,x_0)}
\in V((x))[[x_0]].
\end{align*}
Then
\begin{align*}
    (x_0+x_2)^k
    Y_\E^\phi\(\sum_{i=1}^rg_i(\phi(x,x_0),x)
    Y_\E^\phi\(\al_i(x),x_0\)\be_i(x),x_2\)\theta(x)\in V((x))[[x_0]]((x_2)).
\end{align*}
Consequently,
\begin{align*}
\((x_0+x_2)^k
    Y_\E^\phi\(\sum_{i=1}^rg_i(\phi(x,x_0),x)
    Y_\E^\phi\(\al_i(x),x_0\)\be_i(x),x_2\)\theta(x)\)\Bigg|_{x_0=x_1-x_2}
\end{align*}
lies in $V((x))((x_1,x_2))$.
Combining this with \eqref{eq:weak-asso-temp1}, we get
\begin{align*}
&  x_1^k(x_1-x_2)^k \sum_{i=1}^r g_i(\phi(x,x_1),\phi(x,x_2))
  Y_\E^\phi\(\al_i(x),x_1\)Y_\E^\phi\(\be_i(x),x_2\)\theta(x)\nonumber\\
  =&(x_1-x_2)^k \Bigg((x_0+x_2)^k\sum_{i=1}^r g_i(\phi(x,x_0+x_2),\phi(x,x_2))\nonumber\\
&\qquad\qquad\qquad\times
  Y_\E^\phi\(Y_\E^\phi\(\al_i(x),x_0\)\be_i(x),x_2\)\theta(x)\Bigg)\Bigg|_{x_0=x_1-x_2}.
\end{align*}
Hence,
\begin{align*}
  &x_1^k\sum_{i=1}^r g_i(\phi(x,x_1),\phi(x,x_2))
  Y_\E^\phi\(\al_i(x),x_1\)Y_\E^\phi\(\be_i(x),x_2\)\theta(x)\nonumber\\
  =&\Bigg(
    (x_0+x_2)^k
    Y_\E^\phi\(\sum_{i=1}^rg_i(\phi(x,x_0),x)Y_\E^\phi\(\al_i(x),x_0\)\be_i(x),x_2\)\theta(x)
  \Bigg)\Bigg|_{x_0=x_1-x_2}
\end{align*}
lies in $V((x))((x_1,x_2))$.
Therefore, we have
\begin{align*}
\sum_{i=1}^r g_i(\phi(x,x_1),\phi(x,x_2))
  Y_\E^\phi\(\al_i(x),x_1\)Y_\E^\phi\(\be_i(x),x_2\)\theta(x)
  \in V((x))((x_1,x_2)),
\end{align*}
as desired.
\end{proof}

We have the following technical result (cf. \cite[Proposition 5.3]{li-cmp}):

\begin{prop}\label{prop:tech-calculation3}
Let $W$ be a vector space  and let $V$ be a nonlocal vertex subalgebra of $\E(W)$.
Suppose
\begin{align*}
  \al_i(x),\be_i(x),\mu_j(x),\nu_j(x)\in V, \
  g_i(x_1,x_2),h_j(x_1,x_2)\in\C_\ast((x_1,x_2))
\end{align*}
for $1\le i\le r,\  1\le j\le s$ such that the following relation holds on $W$:
\begin{align}\label{eq:S-local-temp0001}
  \sum_{i=1}^r\iota_{x_1,x_2}(g_i(x_1,x_2))\al_i(x_1)\be_i(x_2)=\sum_{j=1}^s\iota_{x_2,x_1}(h_j(x_2,x_1))\mu_j(x_2)\nu_j(x_1).
\end{align}
Then the following relation holds on $V$:
\begin{align}\label{adjoint-gcomm}
 & \sum_{i=1}^r \iota_{x,x_1,x_2}(g_i(\phi(x,x_1),\phi(x,x_2)))
    Y_\E^\phi\(\al_i(x),x_1\)Y_\E^\phi\(\be_i(x),x_2\)\nonumber\\
  =&\sum_{j=1}^s \iota_{x,x_2,x_1}(h_i(\phi(x,x_2),\phi(x,x_1)))
    Y_\E^\phi\(\mu_j(x),x_2\)Y_\E^\phi\(\nu_j(x),x_1\).
\end{align}
\end{prop}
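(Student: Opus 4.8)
The plan is to transport the identity \eqref{eq:S-local-temp0001} from $W$ to $V$ one factor at a time, using the $\phi$-coordinated analogues of weak associativity (Lemmas \ref{lem:tech-calculation0}, \ref{lem:tech-calculation1}, \ref{lem:tech-calculation2}) together with a delta-function extraction argument. The overarching strategy mimics \cite[Proposition 5.3]{li-cmp}: first show that both sides of \eqref{adjoint-gcomm}, when applied to an arbitrary $\theta(x)\in V$, produce elements lying in $V((x))((x_1,x_2))$ and in $V((x))((x_2,x_1))$ respectively; then show that their ``difference'' is supported on the diagonal $x_1=x_2$ and in fact vanishes.

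First I would invoke Lemma \ref{lem:tech-calculation2} twice: once for the pairs $(\al_i(x),\be_i(x))$ with the series $g_i$, to conclude that
\[
A(x,x_1,x_2):=\sum_{i=1}^r\iota_{x,x_1,x_2}\bigl(g_i(\phi(x,x_1),\phi(x,x_2))\bigr)Y_\E^\phi(\al_i(x),x_1)Y_\E^\phi(\be_i(x),x_2)\theta(x)
\]
lies in $V((x))((x_1,x_2))$, and once for the pairs $(\mu_j(x),\nu_j(x))$ with the series $h_j$ (after swapping the roles of $x_1,x_2$) to conclude that
\[
B(x,x_1,x_2):=\sum_{j=1}^s\iota_{x,x_2,x_1}\bigl(h_j(\phi(x,x_2),\phi(x,x_1))\bigr)Y_\E^\phi(\mu_j(x),x_2)Y_\E^\phi(\nu_j(x),x_1)\theta(x)
\]
lies in $V((x))((x_2,x_1))$. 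Thus $A$ and $B$ each make sense as honest Laurent series, but a priori in \emph{different} iterated Laurent series rings; to compare them I would multiply by a common ``clearing'' factor. Concretely, since $V$ is quasi-compatible there is a nonzero $g(x_1,x_2)\in\C[[x_1,x_2]]$ with $g(x_1,x_2)\al_i(x_1)\be_i(x_2)$ and $g(x_1,x_2)\mu_j(x_1)\nu_j(x_2)$ all in $\te{Hom}(W,W((x_1,x_2)))$; enlarging it, I may take $g(x_1,x_2)$ of the form $(x_1-x_2)^N G(x_1,x_2)$ with $G(0,0)\neq 0$, so that $g(\phi(x,x_1),\phi(x,x_2))$ differs from $(x_1-x_2)^N$ by a unit in $\C((x))[[x_1,x_2]]$ (using $\phi(x,x_1)-\phi(x,x_2)=(x_1-x_2)F(x,x_1,x_2)$ with $F(x,0,0)=p(x)$, exactly as in the proof of Lemma \ref{lem:delta-func-tech-calculation4}).

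The core step is then: apply Lemma \ref{lem:tech-calculation1} (the single-variable associativity transfer) to rewrite, for each side, the product $g(\phi(x,x_0),x)\,Y_\E^\phi(\al_i(x),x_0)\be_i(x)$ as $\bigl(g(x_1,x)\al_i(x_1)\be_i(x)\bigr)|_{x_1=\phi(x,x_0)}$, and similarly for the $\mu_j,\nu_j$; then feed these into $Y_\E^\phi(-,x_2)$ and use the weak associativity of $V$ as a nonlocal vertex algebra (as already done inside the proof of Lemma \ref{lem:tech-calculation2}) to express both $g(\phi(x,x_1),\phi(x,x_2))\cdot A$ and $g(\phi(x,x_1),\phi(x,x_2))\cdot B$ as the substitution $x_0\mapsto x_1-x_2$ of one and the same element of $V((x))((x_0))((x_2))$ — the hypothesis \eqref{eq:S-local-temp0001} guarantees the two resulting iterated-substitution expressions coincide because the relevant inner sum $\sum_i g_i(x_1,x)\al_i(x_1)\be_i(x)=\sum_j h_j(x,x_1)\mu_j(x)\nu_j(x_1)$ agrees in $\te{Hom}(W,W((x_1,x)))$. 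Hence $g(\phi(x,x_1),\phi(x,x_2))(A-B)=0$; since multiplication by the nonzero element $g(\phi(x,x_1),\phi(x,x_2))$ of the field $\C((x))((x_1,x_2))$ (equivalently $\C((x))((x_2,x_1))$) is injective on the relevant spaces, we get $A=B$, i.e. \eqref{adjoint-gcomm} holds applied to $\theta(x)$. As $\theta(x)\in V$ was arbitrary, the operator identity \eqref{adjoint-gcomm} follows.

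The main obstacle I expect is bookkeeping of which iterated Laurent-series ring each expression naturally lives in and justifying that the two ``substitution'' descriptions of $g(\phi(x,x_1),\phi(x,x_2))A$ and $g(\phi(x,x_1),\phi(x,x_2))B$ really land in a \emph{common} ring $V((x))((x_0,x_2))$ where they can be equated before substituting $x_0=x_1-x_2$; this is precisely where the factorization $g(\phi(x,x_1),\phi(x,x_2))=(x_1-x_2)^N\cdot(\text{unit})$ and the field embeddings $\iota_{x,x_1,x_2},\iota_{x,x_2,x_1}$ from Remark \ref{rem:about-phi2} must be handled with care, in the same spirit as the delta-function manipulations of Lemma \ref{lem:delta-func-tech-calculation4}. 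Everything else is a routine adaptation of the untwisted ($\phi=F_a$) argument of \cite{li-cmp}.
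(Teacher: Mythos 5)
Your high-level strategy (multiply by a clearing factor, transport the identity via some form of associativity, then cancel) follows the paper's overall scheme, but the core step of your argument — reconciling the two sides via two-variable weak associativity — has a genuine gap.

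The problem is that weak associativity for the two composites leads to substitutions in \emph{different} variables. For $A$, which involves $Y_\E^\phi(\al_i(x),x_1)Y_\E^\phi(\be_i(x),x_2)\theta(x)$ with $\al_i$ as the outer operator, weak associativity naturally yields
\begin{align*}
Y_\E^\phi\bigl(Y_\E^\phi(\al_i(x),x_0)\be_i(x),\,x_2\bigr)\theta(x)\quad\text{with }x_0=x_1-x_2,
\end{align*}
so the ``inner'' object is $Y_\E^\phi(\al_i(x),x_0)\be_i(x)$ and lives in a space like $V((x))((x_0))((x_2))$. But for $B$, which involves $Y_\E^\phi(\mu_j(x),x_2)Y_\E^\phi(\nu_j(x),x_1)\theta(x)$ with $\mu_j$ as the outer operator, weak associativity gives
\begin{align*}
Y_\E^\phi\bigl(Y_\E^\phi(\mu_j(x),x_0')\nu_j(x),\,x_1\bigr)\theta(x)\quad\text{with }x_0'=x_2-x_1,
\end{align*}
so the inner object is $Y_\E^\phi(\mu_j(x),x_0')\nu_j(x)$ and lives in a space like $V((x))((x_0'))((x_1))$. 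These are \emph{not} the substitution $x_0=x_1-x_2$ of ``one and the same element of $V((x))((x_0))((x_2))$,'' as you assert. Moreover, the corresponding inner identities that Lemma \ref{lem:tech-calculation1} would produce from the hypothesis \eqref{eq:S-local-temp0001} involve the substitution $(x_1,x_2)\mapsto(\phi(x,x_0),x)$ on the left and $(x_1,x_2)\mapsto(x,\phi(x,x_0'))$ on the right; these are different specializations of the hypothesis, so the hypothesis does not directly equate the two inner products. Reconciling the two iterated structures would essentially require a ``commutativity'' or ``skew-symmetry'' principle relating $Y_\E^\phi(a,x_1)Y_\E^\phi(b,x_2)$ to $Y_\E^\phi(b,x_2)Y_\E^\phi(a,x_1)$, which is precisely what is \emph{not} available in the nonlocal setting and is in fact what this proposition is trying to establish (in a weak form). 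This is not a bookkeeping issue that ``the same spirit as Lemma \ref{lem:delta-func-tech-calculation4}'' will resolve.

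What the paper uses instead, and what your argument is missing, is the three-variable substitution identity from \cite[Lemma 4.7]{li-cmp}: after multiplying by $f(\phi(x,x_1),\phi(x,x_2))\,f(\phi(x,x_1),x)\,f(\phi(x,x_2),x)$ with $f$ chosen so that the triple products $f(x_1,x_2)f(x_1,x_3)f(x_2,x_3)\,a(x_1)b(x_2)\theta(x_3)$ are in $\E^{(3)}(W)$, both $Y_\E^\phi(\al_i(x),x_1)Y_\E^\phi(\be_i(x),x_2)\theta(x)$ and $Y_\E^\phi(\mu_j(x),x_2)Y_\E^\phi(\nu_j(x),x_1)\theta(x)$ are realized as the \emph{same} three-variable substitution $(y_1,y_2,y_3)=(\phi(x,x_1),\phi(x,x_2),x)$ applied to the corresponding three-variable operator products $\al_i(y_1)\be_i(y_2)\theta(y_3)$ and $\mu_j(y_2)\nu_j(y_1)\theta(y_3)$. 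Since the hypothesis \eqref{eq:S-local-temp0001} is an operator identity on $W$ in the variables $(y_1,y_2)$, it holds verbatim before the substitution, and so the substituted expressions agree. Then one cancels the nonzero factor in $\C((x))((x_1))((x_2))$ as you describe, after observing via Lemma \ref{lem:tech-calculation2} that both sides already lie in $V((x))((x_1,x_2))$. So the cancellation mechanics of your proposal are fine, but the central transport step needs the simultaneous three-variable substitution formula rather than iterated two-variable weak associativity.
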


\begin{proof} Recall from Remark \ref{rem:about-phi2}  that  $\iota_{x,x_1,x_2}
    g_i(\phi(x,x_1),\phi(x,x_2))\in \C((x))((x_1))((x_2))$
and $\iota_{x,x_2,x_1}
    h_j(\phi(x,x_2),\phi(x,x_1))\in \C((x))((x_2))((x_1))$.
Let $\theta(x)\in V$. As $V$ is quasi-compatible,
there exists nonzero  $f(x_1,x_2)\in\C[[x_1,x_2]]$ such that
\begin{align*}
  &f(x_1,x_2)\be_i(x_1)\theta(x_2),\   \    f(x_1,x_2)\nu_j(x_1)\theta(x_2)\in\E^{(2)}(W),\\
  &f(x_1,x_2)f(x_1,x_3)f(x_2,x_3)\al_i(x_1)\be_i(x_2)\theta(x_3)\in\E^{(3)}(W),\\
  &f(x_1,x_2)f(x_1,x_3)f(x_2,x_3)\mu_j(x_1)\nu_j(x_2)\theta(x_3)
  \in\E^{(3)}(W)
\end{align*}
for $1\le i\le r,$ $1\le j\le s$.
From \cite[Lemma 4.7]{li-cmp}, we have
\begin{align*}
  f(&\phi(x,x_1),\phi(x,x_2))f(\phi(x,x_1),x)f(\phi(x,x_2),x)
  Y_\E^\phi\(\al_i(x),x_1\)Y_\E^\phi\(\be_i(x),x_2\)\theta(x)\\
&  =\(\prod_{1\le a<b\le 3}f(y_a,y_b)\al_i(y_1)\be_i(y_2)\theta(y_3)\)
    \Bigg|_{y_1=\phi(x,x_1),y_2=\phi(x,x_2),y_3=x},\\
  f(&\phi(x,x_1),\phi(x,x_2))f(\phi(x,x_1),x)f(\phi(x,x_2),x)
  Y_\E^\phi\(\mu_j(x),x_1\)Y_\E^\phi\(\nu_j(x),x_2\)\theta(x)\\
&  =\(\prod_{1\le a<b\le 3}f(y_a,y_b)\mu_j(y_1)\nu_j(y_2)\theta(y_3)\)
    \Bigg|_{y_1=\phi(x,x_1),y_2=\phi(x,x_2),y_3=x}
\end{align*}
for $1\le i\le r$ and $1\le j\le s$.
Then we have
\begin{align}\label{eq:temp10001}
 & f(\phi(x,x_1),\phi(x,x_2))f(\phi(x,x_1),x)f(\phi(x,x_2),x)
  \sum_{i=1}^r g_i(\phi(x,x_1),\phi(x,x_2))\nonumber\\
&\quad\times
  Y_\E^\phi\(\al_i(x),x_1\)Y_\E^\phi\(\be_i(x),x_2\)\theta(x)\nonumber\\
=&\(\prod_{1\le a<b\le 3}f(y_a,y_b)\sum_{i=1}^r
    g_i(y_1,y_2)\al_i(y_1)\be_i(y_2)\theta(y_3)\)
    |_{y_1=\phi(x,x_1),y_2=\phi(x,x_2),y_3=x}\nonumber\\
=&\(\prod_{1\le a<b\le 3}f(y_a,y_b)\sum_{j=1}^s
    h_j(y_2,y_1)\mu_j(y_2)\nu_j(y_2)\theta(y_3)\)
    |_{y_1=\phi(x,x_1),y_2=\phi(x,x_2),y_3=x}\nonumber\\
= &f(\phi(x,x_1),\phi(x,x_2))f(\phi(x,x_1),x)f(\phi(x,x_2),x)
\sum_{j=1}^sh_i(\phi(x,x_2),\phi(x,x_1))\nonumber\\
&\quad\times
    Y_\E^\phi\(\mu_j(x),x_2\)Y_\E^\phi\(\nu_j(x),x_1\)\theta(x).
\end{align}
Note that the relation \eqref{eq:S-local-temp0001}
implies that the common quantity on the both sides lies in $V((x_1,x_2))$.
Similarly, with Lemma \ref{lem:tech-calculation2} we conclude that both
\begin{align*}
\sum_{i=1}^r \iota_{x,x_1,x_2}g_i(\phi(x,x_1),\phi(x,x_2))
Y_\E^\phi\(\al_i(x),x_1\)Y_\E^\phi\(\be_i(x),x_2\)\theta(x)
\end{align*}
and
\begin{align*}
\sum_{j=1}^s\iota_{x,x_2,x_1}h_i(\phi(x,x_2),\phi(x,x_1))
Y_\E^\phi\(\mu_j(x),x_2\)Y_\E^\phi\(\nu_j(x),x_1\)\theta(x)
\end{align*}
lie in $V((x))((x_1,x_2))\subset V((x))((x_1))((x_2))$ which is a vector space over $\C((x))((x_1))((x_2))$.
Notice that
\begin{align*}
0\ne f(\phi(x,x_1),\phi(x,x_2))f(\phi(x,x_1),x)f(\phi(x,x_2),x)\in\C((x))((x_1))((x_2)).
\end{align*}
Then, from \eqref{eq:temp10001} we get
\begin{align*}
&\sum_{i=1}^r \iota_{x,x_1,x_2}g_i(\phi(x,x_1),\phi(x,x_2))
Y_\E^\phi\(\al_i(x),x_1\)Y_\E^\phi\(\be_i(x),x_2\)\theta(x)\\
=&\sum_{j=1}^s\iota_{x,x_2,x_1}h_i(\phi(x,x_2),\phi(x,x_1))
Y_\E^\phi\(\mu_j(x),x_2\)Y_\E^\phi\(\nu_j(x),x_1\)\theta(x).
\end{align*}
Thus, (\ref{adjoint-gcomm}) holds.
\end{proof}

As the main result of this section we have:

\begin{thm}\label{prop:tech-calculation5}
Let $W$ be a vector space and let $V$ be a nonlocal vertex subalgebra of $\E(W)$.
Suppose
\begin{align*}
 \al_i(x),\be_i(x), \mu_j(x),\nu_j(x)\in V,
 \  g_i(x_1,x_2),h_j(x_1,x_2)\in\C_\ast((x_1,x_2))
\end{align*}
for $1\le i\le r,\ 1\le j\le s$ such that the following relation holds on $W$:
\begin{align}\label{alphabeta=gamma}
  \sum_{i=1}^r&\iota_{x_1,x_2}\(g_i(x_1,x_2)\)\al_i(x_1)\be_i(x_2)
  -\sum_{j=1}^s\iota_{x_2,x_1}\(h_j(x_1,x_2)\)
    \mu_j(x_2)\nu_j(x_1)\nonumber\\
&=\sum_{i=0}^t\sum_{k=1}^N
  \gamma_{k,i}(x_2)\frac{1}{i!}\(p(x_2)\partial_{x_2}\)^ip(x_1)x_1\inverse \delta\(c_k\frac{x_2}{x_1}\),
\end{align}
where $N,t\ge 0$ and $c_0,\dots,c_N$ are distinct nonzero complex numbers with $c_0=1$
and $\gamma_{k,i}(x)\in V$ for $0\le i\le t$, $0\le k\le N$.
Then
\begin{align}\label{YEphialpha-beta-gamma}
& \sum_{i=1}^r\iota_{x,x_1,x_2}\(g_i(\phi(x,x_1),\phi(x,x_2))\)
  Y_\E^\phi(\al_i(x),x_1)Y_\E^\phi(\be_i(x),x_2)\nonumber\\
 & \   \   -\sum_{j=1}^s\iota_{x,x_2,x_1}
    \(h_j(\phi(x,x_1),\phi(x,x_2))\)
    Y_\E^\phi(\mu_j(x),x_2)Y_\E^\phi(\nu_j(x),x_1)\nonumber\\
  =&\sum_{i=0}^tY_\E^\phi(\gamma_{0,i}(x),x_2)\frac{1}{i!}\partial_{x_2}^ix_1\inverse \delta\(\frac{x_2}{x_1}\).
\end{align}
\end{thm}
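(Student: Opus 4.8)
The plan is to absorb the delta-function terms on the right side of \eqref{alphabeta=gamma} into products of two fields, so as to reduce the hypothesis to an honest equality of the form \eqref{eq:S-local-temp0001}, then to invoke Proposition \ref{prop:tech-calculation3}, and finally to extract \eqref{YEphialpha-beta-gamma} from the resulting identity \eqref{adjoint-gcomm} using the delta-function calculus of Lemma \ref{lem:delta-func-tech-calculation4}. Concretely, for $0\le k\le N$ and $0\le i\le t$ set $F_{k,i}(x_1,x_2)=\(p(x_2)\partial_{x_2}\)^i\frac{p(x_1)}{x_1-c_kx_2}\in\C_\ast((x_1,x_2))$. Extracting the coefficient of $z^i$ in \eqref{eq:delta-func1} with $c=c_k$ yields
\begin{align*}
\frac{1}{i!}\(p(x_2)\partial_{x_2}\)^ip(x_1)\,x_1\inverse\delta\(c_k\frac{x_2}{x_1}\)=\(\iota_{x_1,x_2}-\iota_{x_2,x_1}\)\frac{1}{i!}F_{k,i}(x_1,x_2).
\end{align*}
Since $1_W$ and every $\gamma_{k,i}(x)$ lie in $V$, each delta term in \eqref{alphabeta=gamma} can be written as $\(\iota_{x_1,x_2}-\iota_{x_2,x_1}\)\(\frac{1}{i!}F_{k,i}(x_1,x_2)\)\,1_W(x_1)\gamma_{k,i}(x_2)$; moving the $\iota_{x_1,x_2}$-part to the left side and the $\iota_{x_2,x_1}$-part to the right side turns \eqref{alphabeta=gamma} into a relation of the shape \eqref{eq:S-local-temp0001} for the families obtained from $\{(\al_i,\be_i)\}$ by adjoining the pairs $(1_W,\gamma_{k,i})$ with coefficient functions $-\frac{1}{i!}F_{k,i}$, and from $\{(\mu_j,\nu_j)\}$ by adjoining $(\gamma_{k,i},1_W)$ with coefficient functions $-\frac{1}{i!}F_{k,i}$ (placed in the slot carrying the $\iota_{x_2,x_1}$-expansion). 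Being simultaneously an $\iota_{x_1,x_2}$- and an $\iota_{x_2,x_1}$-expansion, the common quantity lies in $\te{Hom}(W,W((x_1,x_2)))$, so Proposition \ref{prop:tech-calculation3} is applicable.

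Next I would apply Proposition \ref{prop:tech-calculation3} to this enlarged relation and use that $Y_\E^\phi(1_W,x)$ is the identity operator on $V$. Then the contribution of each adjoined pair to \eqref{adjoint-gcomm} is $-\frac{1}{i!}\iota_{x,x_1,x_2}\(F_{k,i}(\phi(x,x_1),\phi(x,x_2))\)Y_\E^\phi(\gamma_{k,i}(x),x_2)$ on the left side and $-\frac{1}{i!}\iota_{x,x_2,x_1}\(F_{k,i}(\phi(x,x_1),\phi(x,x_2))\)Y_\E^\phi(\gamma_{k,i}(x),x_2)$ on the right side; the crucial point is that the argument swap built into the passage from \eqref{eq:S-local-temp0001} to \eqref{adjoint-gcomm} makes \emph{both} iota-expansions act on the \emph{same} rational function $F_{k,i}(\phi(x,x_1),\phi(x,x_2))$. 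Collecting the $\gamma$-terms, \eqref{adjoint-gcomm} then asserts that the left side of \eqref{YEphialpha-beta-gamma} equals
\begin{align*}
\sum_{k,i}\(\iota_{x,x_1,x_2}-\iota_{x,x_2,x_1}\)\(\frac{1}{i!}F_{k,i}(\phi(x,x_1),\phi(x,x_2))\)Y_\E^\phi(\gamma_{k,i}(x),x_2).
\end{align*}
Finally, the coefficient of $z^i$ in \eqref{eq:delta-func2} with $c=c_k$ shows that the iota-difference above equals $\frac{1}{i!}\partial_{x_2}^ix_1\inverse\delta(x_2/x_1)$ when $c_k=1$ and vanishes otherwise; as $c_0=1$ and $c_1,\dots,c_N\neq1$, only the $k=0$ summands remain, and we recover precisely the right side of \eqref{YEphialpha-beta-gamma}.

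The main obstacle I anticipate is not conceptual but notational: one must carefully reconcile the ordering conventions for the functions $h_j$ among \eqref{alphabeta=gamma}, Proposition \ref{prop:tech-calculation3} and \eqref{YEphialpha-beta-gamma}, so that after the argument swap one genuinely lands on $h_j(\phi(x,x_1),\phi(x,x_2))$ and on a single copy of $F_{k,i}(\phi(x,x_1),\phi(x,x_2))$ rather than on $F_{k,i}(\phi(x,x_2),\phi(x,x_1))$ --- it is exactly this that lets \eqref{eq:delta-func2} be applied verbatim. Once the bookkeeping is arranged, every analytic point (the well-definedness of the substitutions $x_1\mapsto\phi(x,x_1)$ and $x_2\mapsto\phi(x,x_2)$, and the membership in $\te{Hom}(W,W((x_1,x_2)))$ after moving the delta terms) is supplied by the results already in place, chiefly Lemma \ref{lem:delta-func-tech-calculation4} and Proposition \ref{prop:tech-calculation3}.
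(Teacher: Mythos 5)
Your proof is correct, and it differs from the paper's in a genuine (if modest) structural way, so let me compare the two. Both proofs share the central device of absorbing the delta-function contributions into products of fields by pairing with the identity field $1_W\in V$, rewriting each delta term as an iota-difference, splitting the two halves between the $\iota_{x_1,x_2}$ side and the $\iota_{x_2,x_1}$ side, and then invoking Proposition \ref{prop:tech-calculation3}; both then conclude by recombining the resulting iota-differences into formal delta functions. The divergence lies in how the $c_k\neq1$ terms are disposed of. The paper pre-multiplies the hypothesis by $f(x_1,x_2)=\prod_{k=1}^N(x_1-c_kx_2)^{t+1}$, which annihilates every $c_k\neq1$ delta term before Proposition \ref{prop:tech-calculation3} is applied; it then must cancel $f(\phi(x,x_1),\phi(x,x_2))$ afterwards (checking that this element is invertible in $\C((x))[[x_1,x_2]]$), and it only needs the $c=1$ portion of the delta-function calculus, stated as Corollary \ref{k-form}. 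You instead carry all $N+1$ families $F_{k,i}$ through Proposition \ref{prop:tech-calculation3} on an equal footing and let the $c_k\neq1$ contributions die only after the substitution, by the $c\neq1$ branch of \eqref{eq:delta-func2} in Lemma \ref{lem:delta-func-tech-calculation4}. Your route is a bit more uniform, since it avoids introducing $f$ and the attendant invertibility check, and it makes visible exactly where the hypothesis $c_1,\dots,c_N\neq 1$ is used (namely, in the vanishing case of \eqref{eq:delta-func2}); the paper's route is more hands-on but keeps the bookkeeping smaller by working with a single value of $c$. Your remark about the ordering convention for $h_j$ (so that the argument swap in passing from \eqref{eq:S-local-temp0001} to \eqref{adjoint-gcomm} lands all $\gamma$-terms on a single copy of $F_{k,i}(\phi(x,x_1),\phi(x,x_2))$) is exactly the right point to flag, and once that is reconciled the proof is complete.
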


\begin{proof}  Notice that we have
\begin{align*}
  &(x_1-cx_2)^{i+1}\(p(x_2)\partial_{x_2}\)^ip(x_1)x_1\inverse \delta\(\frac{cx_2}{x_1}\)=0
\end{align*}
for any $c\in \C^{\times}$ and $i\in \N$ and that
\begin{align*}
\(p(x_2)\partial_{x_2}\)^ip(x_1)x_1\inverse \delta\(\frac{x_2}{x_1}\)
=\iota_{x_1,x_2}F_{i}(x_1,x_2)-\iota_{x_2,x_1}F_{i}(x_1,x_2),
\end{align*}
where $F_{i}(x_1,x_2)=\(p(x_2)\partial_{x_2}\)^i(p(x_1)/(x_1-x_2))$ for $i\ge 0$.
Set $$f(x_1,x_2)=\prod_{k=1}^N(x_1-c_kx_2)^{t+1}.$$
Then we have
\begin{align*}
 & f(x_1,x_2)\sum_{i=0}^t\sum_{k=0}^N
  \gamma_{k,i}(x_2)\frac{1}{i!}\(p(x_2)\partial_{x_2}\)^ip(x_1)x_1\inverse \delta\(c_k\frac{x_2}{x_1}\)\nonumber\\
 =&f(x_1,x_2)\sum_{i=0}^t
  \gamma_{0,i}(x_2)\frac{1}{i!}\(p(x_2)\partial_{x_2}\)^ip(x_1)x_1\inverse \delta\(\frac{x_2}{x_1}\)\nonumber\\
  =& f(x_1,x_2)\sum_{i=0}^t
 \frac{1}{i!} \left(\iota_{x_1,x_2}(F_{i}(x_1,x_2)) -\iota_{x_2,x_1}(F_{i}(x_1,x_2))\right) \gamma_{0,i}(x_2).
 \end{align*}
Using this and (\ref{alphabeta=gamma}) we get
\begin{align*}
 & f(x_1,x_2)\sum_{i=1}^r\iota_{x_1,x_2}\(g_i(x_1,x_2)\)\al_i(x_1)\be_i(x_2)-
 f(x_1,x_2)\sum_{i=0}^t
 \frac{1}{i!} \iota_{x_1,x_2}F_{i}(x_1,x_2)  \gamma_{0,i}(x_2) \nonumber\\
  =&f(x_1,x_2)\sum_{j=1}^s\iota_{x_2,x_1}\(h_j(x_1,x_2)\)
    \mu_j(x_2)\nu_j(x_1)-
    f(x_1,x_2)\sum_{i=0}^t
 \frac{1}{i!}\iota_{x_2,x_1}(F_{i}(x_1,x_2)) \gamma_{0,i}(x_2).
\end{align*}
Then by Proposition \ref{prop:tech-calculation3}, we have
\begin{align*}
 & \sum_{i=1}^r\iota_{x,x_1,x_2}\(f(\phi(x,x_1),\phi(x,x_2))g_i(\phi(x,x_1),\phi(x,x_2))\)
 Y_{\E}^{\phi}(\al_i(x),x_1)Y_{\E}^{\phi}(\be_i(x),x_2)
 \\
 &\   \   \       -\sum_{i=0}^t \iota_{x,x_1,x_2}f(\phi(x,x_1),\phi(x,x_2))
 \frac{1}{i!} \iota_{x,x_1,x_2}F_{i}(\phi(x,x_1),\phi(x,x_2)) Y_{\E}^{\phi}( \gamma_{0,i}(x),x_2) \nonumber\\
  =&\sum_{j=1}^s\iota_{x,x_2,x_1}\(f(\phi(x,x_1),\phi(x,x_2))h_j(\phi(x,x_1),\phi(x,x_2))\)
    Y_{\E}^{\phi}(\mu_j(x),x_2)Y_{\E}^{\phi}(\nu_j(x),x_1)\\
    &\   \    \    -\sum_{i=0}^t \iota_{x,x_2,x_1}f(\phi(x,x_1),\phi(x,x_2))
 \frac{1}{i!}\iota_{x,x_2,x_1}F_{i}(\phi(x,x_1),\phi(x,x_2))
    Y_{\E}^{\phi}(\gamma_{0,i}(x),x_2).
\end{align*}
Note that with $f(x_1,x_2)\in \C[x_1,x_2]$,
 we have  $f(\phi(x,x_1),\phi(x,x_2))\in \C((x))[[x_1,x_2]]$.
As
\begin{align*}
  f(\phi(x,0),\phi(x,0))=f(x,x)=x^{N(t+1)}\prod_{k=1}^N(1-c_k)^{t+1}\ne 0,
\end{align*}
$f(\phi(x,x_1),\phi(x,x_2))$ is invertible in $\C((x))[[x_1,x_2]]$.
Then by cancellation we get
\begin{align*}
 & \sum_{i=1}^r\iota_{x,x_1,x_2}\(g_i(\phi(x,x_1),\phi(x,x_2))\)
 Y_{\E}^{\phi}(\al_i(x),x_1)Y_{\E}^{\phi}(\be_i(x),x_2)\\
 &\   \   \       -\sum_{i=0}^t
 \frac{1}{i!} \iota_{x,x_1,x_2}(F_{i}(\phi(x,x_1),\phi(x,x_2))) Y_{\E}^{\phi}( \gamma_{0,i}(x),x_2) \nonumber\\
  =&\sum_{j=1}^s\iota_{x,x_2,x_1}\(h_j(\phi(x,x_1),\phi(x,x_2))\)
    Y_{\E}^{\phi}(\mu_j(x),x_2)Y_{\E}^{\phi}(\nu_j(x),x_1)\\
    &\    \    -\sum_{i=0}^t
 \frac{1}{i!}\iota_{x,x_2,x_1}(F_{i}(\phi(x,x_1),\phi(x,x_2)))
    Y_{\E}^{\phi}(\gamma_{0,i}(x),x_2).
\end{align*}
Combining this relation with Corollary \ref{k-form},
 we obtain (\ref{YEphialpha-beta-gamma}).
\end{proof}

Next, we consider a special form of Theorem \ref{prop:tech-calculation5}.
For the rest of this section, we  assume
\begin{eqnarray}\label{Psi-psi-element}
\Psi(z_1,z_2)\in \C[z_1^{\pm 1},z_2^{\pm 1}],\    \    \    \   \psi(x)\in \C[[x]] \  \mbox{ with }\psi'(0)=1
\end{eqnarray}
such that
\begin{eqnarray}\label{Psi-phi-psi-1}
 \Psi(\phi(x,x_1),\phi(x,x_2))=\psi(x_1-x_2).
\end{eqnarray}
As $\phi(x,0)=x$, we have
\begin{eqnarray}\label{Psi-phi-psi}
 \Psi(\phi(x,z),x)=\psi(z).
\end{eqnarray}
This implies $\phi(x,z)\ne x$ as $\psi'(z)\ne 0$ from the assumption above.

\begin{rem}\label{phi_r}
{\em  For $r\in \Z$, set
\begin{align}
\phi_r(x,z)=e^{zx^{r+1}\frac{d}{dx}}x.
\end{align}
From \cite{fhl}, we have
\begin{eqnarray*}
\phi_r(x,z)=\begin{cases}x\(1-rzx^{r}\)^{\frac{-1}{r}}&\  \  \mbox{ if }r\ne 0\\
xe^z& \  \  \mbox{ if }r= 0.
\end{cases}
\end{eqnarray*}
Set
\begin{eqnarray}
\Psi_r(z_1,z_2)=\begin{cases}-\frac{1}{r}(z_1^{-r}-z_2^{-r})&\  \  \mbox{ if }r\ne 0\\
z_1/z_2&\  \  \mbox{ if }r= 0
\end{cases}
\end{eqnarray}
and
\begin{eqnarray}
\psi_r(z)=\begin{cases}z&\  \  \mbox{ if }r\ne 0\\
e^{z}& \  \  \mbox{ if }r= 0.
\end{cases}
\end{eqnarray}
Then we have $\Psi_r(z_1,z_2)\in \C[z_1^{\pm 1},z_2^{\pm 1}]$ and $\psi_r(z)\in \C[[z]]$ with $\psi_r'(0)=1$ such that
\begin{eqnarray*}
 \Psi_r\(\phi_r(x,x_1),\phi_r(x,x_2)\)=\psi_r(x_1-x_2).
 \end{eqnarray*}}
\end{rem}


Recall that $\C(x)$ is the fraction field of $\C[x]$.
Let $\C_{*}(x)$ denote the fraction field of $\C[[x]]$, which is isomorphic to $\C((x))$.
Denote the canonical isomorphism by $\iota_{x,0}$:
\begin{eqnarray}
\iota_{x,0}: \   \    \C_{*}(x)\rightarrow \C((x)).
\end{eqnarray}
As $\psi(x)\in \C[[x]]$ with $\psi'(x)\ne 0$, we have $q(\psi(x))\ne 0$ for any nonzero $q(z)\in \C[z]$
(since $\C$ is algebraically closed and $\psi(x)\notin \C$).
Consequently, for any $Q(z)\in \C(z)$, $Q(\psi(x))$ is a well defined element of $\C_{*}(x)$.
Then we have a field embedding
\begin{eqnarray}
 \iota_{z=\psi(x)}: \C(z)\rightarrow \C((x)),
 \end{eqnarray}
 which is defined by
$$\iota_{z=\psi(x)}(Q(z))=\iota_{x,0}(Q(\psi(x)))\   \   \   \mbox{ for } Q(z)\in \C(z).$$
Note that
$$Q(\Psi(x_1,x_2))\in \C(x_1,x_2)\   \   \mbox{ for }Q(z)\in \C(z).$$
 Under this setting with $\Psi$ and $\psi$ satisfying the conditions (\ref{Psi-psi-element}) and (\ref{Psi-phi-psi-1}),
 we immediately have:

\begin{thm}\label{qva-specialization}
Let $W$ be a vector space and let $V$ be a nonlocal vertex subalgebra of $\E(W)$. Suppose
\begin{align*}
 \al_i(x),\be_i(x), \mu_j(x),\nu_j(x)\in V,
 \  f_i(z),g_j(z)\in\C(z)
\end{align*}
for $1\le i\le r,\ 1\le j\le s$ such that the following relation holds on $W$:
\begin{align}\label{alphabeta=gamma}
  \sum_{i=1}^r&\iota_{x_1,x_2}\(f_i(\Psi(x_1,x_2))\)\al_i(x_1)\be_i(x_2)
  -\sum_{j=1}^s\iota_{x_2,x_1}\(g_j(\Psi(x_1,x_2))\)
    \mu_j(x_2)\nu_j(x_1)\nonumber\\
&=\sum_{i=0}^t\sum_{k=1}^N
  \gamma_{k,i}(x_2)\frac{1}{i!}\(p(x_2)\partial_{x_2}\)^ip(x_1)x_1\inverse \delta\(c_k\frac{x_2}{x_1}\),
\end{align}
where $N,t\ge 0$ and $c_0,\dots,c_N$ are distinct nonzero complex numbers with $c_0=1$
and $\gamma_{k,i}(x)\in V$ for $0\le i\le t$, $0\le k\le N$.
Then
\begin{align}\label{YEphialpha-gcommutator-qva}
& \sum_{i=1}^r\iota_{x_1,x_2}\(f_i(\psi(x_1-x_2))\)
  Y_\E^\phi(\al_i(x),x_1)Y_\E^\phi(\be_i(x),x_2)\nonumber\\
 & \   \   -\sum_{j=1}^s\iota_{x_2,x_1}
    \(g_j(\psi(x_1-x_2))\)
    Y_\E^\phi(\mu_j(x),x_2)Y_\E^\phi(\nu_j(x),x_1)\nonumber\\
  =\  &\sum_{i=0}^tY_\E^\phi(\gamma_{0,i}(x),x_2)\frac{1}{i!}\partial_{x_2}^ix_1\inverse \delta\(\frac{x_2}{x_1}\).
\end{align}
\end{thm}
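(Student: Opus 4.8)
The plan is to deduce Theorem \ref{qva-specialization} as a direct specialization of Theorem \ref{prop:tech-calculation5}. First I would set
$$g_i(x_1,x_2)=f_i(\Psi(x_1,x_2))\ \ (1\le i\le r),\qquad
h_j(x_1,x_2)=g_j(\Psi(x_1,x_2))\ \ (1\le j\le s).$$
Since $\Psi(x_1,x_2)\in\C[x_1^{\pm1},x_2^{\pm1}]$ is non-constant, it is transcendental over $\C$, so $Q(\Psi(x_1,x_2))$ is a well-defined nonzero element of $\C(x_1,x_2)\subset\C_\ast((x_1,x_2))$ for every $Q\in\C(z)$ (this is the observation recorded just before the theorem). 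Thus $g_i,h_j\in\C_\ast((x_1,x_2))$, and with these substitutions the hypothesis of Theorem \ref{qva-specialization} becomes verbatim the hypothesis of Theorem \ref{prop:tech-calculation5}.

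Invoking Theorem \ref{prop:tech-calculation5} then yields
\begin{align*}
&\sum_{i=1}^r\iota_{x,x_1,x_2}\bigl(g_i(\phi(x,x_1),\phi(x,x_2))\bigr)
  Y_\E^\phi(\al_i(x),x_1)Y_\E^\phi(\be_i(x),x_2)\\
&\qquad-\sum_{j=1}^s\iota_{x,x_2,x_1}\bigl(h_j(\phi(x,x_1),\phi(x,x_2))\bigr)
  Y_\E^\phi(\mu_j(x),x_2)Y_\E^\phi(\nu_j(x),x_1)\\
&\qquad=\sum_{i=0}^tY_\E^\phi(\gamma_{0,i}(x),x_2)\frac{1}{i!}\partial_{x_2}^ix_1^{-1}\delta\Bigl(\frac{x_2}{x_1}\Bigr),
\end{align*}
and the key step is to rewrite the coefficient functions via \eqref{Psi-phi-psi-1}. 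Indeed
$$g_i(\phi(x,x_1),\phi(x,x_2))=f_i\bigl(\Psi(\phi(x,x_1),\phi(x,x_2))\bigr)=f_i(\psi(x_1-x_2)),$$
and likewise $h_j(\phi(x,x_1),\phi(x,x_2))=g_j(\psi(x_1-x_2))$. The crucial point is that $\psi(x_1-x_2)\in\C[[x_1,x_2]]$ has linear part $x_1-x_2$, hence is transcendental over $\C$, so $f_i(\psi(x_1-x_2))$ and $g_j(\psi(x_1-x_2))$ are well-defined elements of the fraction field of $\C[[x_1,x_2]]$ that involve no $x$. Consequently the three-variable iota-maps collapse: $\iota_{x,x_1,x_2}$ restricts to $\iota_{x_1,x_2}$ on $f_i(\psi(x_1-x_2))$ (inside $(\E(W))((x_1))((x_2))$) and $\iota_{x,x_2,x_1}$ restricts to $\iota_{x_2,x_1}$ on $g_j(\psi(x_1-x_2))$ (inside $(\E(W))((x_2))((x_1))$). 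Substituting these identifications into the display above gives exactly \eqref{YEphialpha-gcommutator-qva}.

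I expect no serious obstacle here; the text itself flags the result as immediate, and the only care required is bookkeeping — checking that $f_i\circ\Psi$ and $g_j\circ\Psi$ genuinely lie in the function field over which $Y_\E^\phi$ and Theorem \ref{prop:tech-calculation5} are set up, and justifying the collapse of $\iota_{x,x_1,x_2}$ onto $\iota_{x_1,x_2}$ once the variable $x$ has been removed through \eqref{Psi-phi-psi-1}. Both points rest on the transcendence of $\Psi(x_1,x_2)$ and of $\psi(x_1-x_2)$ over $\C$, which is precisely what legitimizes the maps $Q\mapsto Q(\Psi(x_1,x_2))$ and $\iota_{z=\psi(x)}$ introduced before the theorem. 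Finally I would note that, by Remark \ref{phi_r}, the standing hypotheses \eqref{Psi-psi-element}--\eqref{Psi-phi-psi-1} are satisfied for every $\phi=\phi_r$ with $r\in\Z$, so Theorem \ref{qva-specialization} applies to all the settings of interest in this paper.
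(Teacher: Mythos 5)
Your argument is correct and is exactly the specialization the paper has in mind: the paper introduces the theorem with "we immediately have," leaving the verification that $g_i=f_i\circ\Psi$, $h_j=g_j\circ\Psi$ satisfy the hypotheses of Theorem \ref{prop:tech-calculation5} and that $\Psi(\phi(x,x_1),\phi(x,x_2))=\psi(x_1-x_2)$ eliminates the $x$-dependence (so the three-variable $\iota$-maps reduce to two-variable ones) to the reader. You have simply written out that implicit verification, and both your checks — that $f_i(\Psi(x_1,x_2))\in\C(x_1,x_2)\subset\C_\ast((x_1,x_2))$ and that $f_i(\psi(x_1-x_2))$ is a well-defined $x$-independent element of the fraction field of $\C[[x_1,x_2]]$ — are exactly the points guaranteed by the transcendence observations made in the paragraph before the theorem.
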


Let $\phi, \Psi,\psi$ be given as before. Motivated by Theorem \ref{qva-specialization}
we formulate the following notion:

\begin{de}
Let $W$ be a vector space.
A subset $U$ of $\E(W)$ is said to be {\em quasi $(\phi,S)$-local} if for any $a(x),b(x)\in U$, there exist
$$u_i(x),v_i(x)\in U,\  f_i(z)\in \C(z)\  \  (i=1,\dots,r)$$
and a nonzero polynomial $q(x_1,x_2)\in \C[x_1,x_2]$ such that
\begin{eqnarray}\label{phi-S-locality}
q(x_1,x_2)a(x_1)b(x_2)=q(x_1,x_2)\sum_{i=1}^r\iota_{x_2,x_1}f_i(\Psi(x_1,x_2))u_i(x_2)v_i(x_1).
\end{eqnarray}
We define a notion of $(\phi,S)$-local subset (without the prefix ``quasi'') by simply changing the phrase
``a nonzero polynomial $q(x_1,x_2)\in \C[x_1,x_2]$'' to ``a Laurent polynomial $q(x_1,x_2)=\Psi(x_1,x_2)^k$ with $k\in \N$''.
\end{de}

\begin{thm}\label{qva-specialization-2}
Let $W$ be a vector space and let $U$ be a quasi $(\phi,S)$-local subset of $\E(W)$. Then
$U$ is quasi compatible and the nonlocal vertex algebra $\<U\>_{\phi}$ generated  in $\E(W)$ by $U$
is a weak quantum vertex algebra.
\end{thm}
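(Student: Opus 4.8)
The plan is to verify the two defining conditions of a weak quantum vertex algebra for $\<U\>_\phi$, namely that it is a nonlocal vertex algebra (which is automatic from Theorem \ref{thm:abs-construct-non-h-adic} once we know $U$ is quasi-compatible) and that it satisfies the $S$-Jacobi identity \eqref{S-Jacobi}, or equivalently the $S$-locality. First I would observe that quasi $(\phi,S)$-locality of $U$ immediately gives quasi-compatibility: the relation \eqref{phi-S-locality} expresses $q(x_1,x_2)a(x_1)b(x_2)$ as a finite sum $q(x_1,x_2)\sum_i \iota_{x_2,x_1}f_i(\Psi(x_1,x_2))u_i(x_2)v_i(x_1)$, and since each $\iota_{x_2,x_1}f_i(\Psi(x_1,x_2))$ lies in $\C((x_2))((x_1))$ and each $u_i(x_2)v_i(x_1)$, after multiplication by a suitable power of $\Psi(x_1,x_2)$ (whose lowest-degree behaviour we control), lands in $\te{Hom}(W,W((x_1,x_2)))$, we can clear all denominators with a single nonzero $F(x_1,x_2)\in\C[x_1,x_2]$ to witness compatibility. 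Hence by Theorem \ref{thm:abs-construct-non-h-adic}, $(\<U\>_\phi,Y_\E^\phi,1_W)$ is a nonlocal vertex algebra and $W$ is a $\phi$-coordinated quasi $\<U\>_\phi$-module.

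The heart of the matter is to upgrade the quasi $(\phi,S)$-locality of the generating set $U$ to genuine $S$-locality of all of $\<U\>_\phi$. Here the key input is Theorem \ref{qva-specialization} (equivalently, the more primitive Theorem \ref{prop:tech-calculation5}): for $a,b\in U$, rewrite \eqref{phi-S-locality} in the standard form \eqref{alphabeta=gamma}. The passage from the polynomial $q(x_1,x_2)$ to the delta-function right-hand side is the usual maneuver — the difference
$$\iota_{x_1,x_2}(q(x_1,x_2)^{-1})-\iota_{x_2,x_1}(q(x_1,x_2)^{-1})$$
is supported on the zero set of $q$, and after arranging (by absorbing factors) that $q$ is a product $\prod_k(x_1-c_kx_2)^{t+1}$ with $c_0=1$, this difference is a finite combination of the delta-type distributions $(p(x_2)\partial_{x_2})^i p(x_1)x_1^{-1}\delta(c_k x_2/x_1)$ appearing on the right of \eqref{alphabeta=gamma}, with coefficients $\gamma_{k,i}(x)$ which lie in $\<U\>_\phi$ because they are obtained from the $a,b,u_i,v_i,f_i$ by the operations building $\<U\>_\phi$. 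Applying Theorem \ref{qva-specialization} then yields, on $\<U\>_\phi$,
$$\iota_{x_1,x_2}\!\big(1\big)Y_\E^\phi(a,x_1)Y_\E^\phi(b,x_2)-\sum_i\iota_{x_2,x_1}\big(f_i(\psi(x_1-x_2))\big)Y_\E^\phi(u_i,x_2)Y_\E^\phi(v_i,x_1)=\sum_i Y_\E^\phi(\gamma_{0,i},x_2)\partial_{x_2}^{(i)}x_1^{-1}\delta(x_2/x_1),$$
which is precisely the $S$-locality relation for the generators (with $S$-matrix entries $f_i(\psi(x_1-x_2))$), the right-hand side being the singular part that corresponds to $Y_\E^\phi(Y_\E^\phi(a,x_0)b,x_2)$ under $x_1=\phi(x_2,x_0)$.

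It then remains to propagate this $S$-locality from the generating set $U$ to the whole generated nonlocal vertex algebra $\<U\>_\phi$. For this I would invoke the bootstrapping principle for weak quantum vertex algebras established in \cite{li-nonlocal}: if a nonlocal vertex algebra is generated by a subset on which $S$-locality holds, then $S$-locality holds on the entire algebra (the set of elements $S$-local with respect to a fixed $S$-local generating set is closed under the vertex operator products $(\cdot)_n^\phi(\cdot)$ and contains the vacuum). Concretely, one checks that the collection of pairs $(a,b)$ in $\<U\>_\phi$ satisfying an $S$-locality relation of the required shape is stable under replacing $a$ by $c_n^\phi a$ and $b$ by $c_n^\phi b$ for $c\in U$, $n\in\Z$; this stability is exactly the content of the general "$S$-local closure" lemma, and the $\phi$-coordinated analogue of the relevant computations is supplied by Lemma \ref{lem:tech-calculation0}, Lemma \ref{lem:tech-calculation1} and Proposition \ref{prop:tech-calculation3}. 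Since $\<U\>_\phi$ is by definition the smallest $Y_\E^\phi$-closed quasi-compatible subspace containing $1_W$ and $U$, this closure property forces every pair in $\<U\>_\phi$ to be $S$-local, so $\<U\>_\phi$ is a weak quantum vertex algebra. The main obstacle I anticipate is the bookkeeping in this last step — verifying that the rational $S$-matrix entries $f_i(\psi(x_1-x_2))$ transform correctly (and stay in the right field $\C_*((x_1,x_2))$, i.e.\ are expansions of elements of $\C(z)$ composed with $\psi$) under the vertex-operator products, which is where the compatibility of $\Psi,\psi$ with $\phi$ via \eqref{Psi-phi-psi-1} is essential; the delta-function formal calculus itself is routine given Lemma \ref{lem:delta-func-tech-calculation4} and Corollary \ref{k-form}.
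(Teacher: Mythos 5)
Your overall strategy (quasi-compatibility $\Rightarrow$ nonlocal vertex algebra, $S$-locality on generators, then propagate) is the right shape, and the final bootstrapping step matches the paper's closing citation of a result from \cite{ltw}. But the central step — getting $S$-locality of the generators — takes a route the paper deliberately avoids, and as written it contains a real gap.

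You propose to divide out $q(x_1,x_2)$ and write the difference $a(x_1)b(x_2)-\sum_i\iota_{x_2,x_1}f_i(\Psi(x_1,x_2))u_i(x_2)v_i(x_1)$ as a sum $\sum_{i,k}\gamma_{k,i}(x_2)\tfrac{1}{i!}(p(x_2)\partial_{x_2})^ip(x_1)x_1^{-1}\delta(c_kx_2/x_1)$ and then feed this into Theorem~\ref{qva-specialization}. That theorem, however, requires the coefficients $\gamma_{k,i}(x)$ to lie in $\<U\>_{\phi}$ for \emph{all} $i,k$ — including those with $c_k\neq 1$ — and your sketch offers no justification of this. The $\gamma_{0,i}$ can plausibly be matched with $a(x)_n^\phi b(x)\in\<U\>_\phi$ via weak $\phi$-associativity, but there is no reason built into the definition of $\<U\>_\phi$ for the "off-diagonal" coefficients $\gamma_{k,i}$ ($c_k\neq 1$) to land there; that would require a separate argument which you haven't supplied.

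The paper sidesteps this entirely. It keeps the polynomial $q$ on both sides of \eqref{phi-S-locality} and applies Theorem~\ref{prop:tech-calculation5} with the delta-function right-hand side \emph{empty} (no $\gamma$ terms at all, so nothing to verify), producing
\[
q(\phi(x,x_1),\phi(x,x_2))\,Y_\E^\phi(a,x_1)Y_\E^\phi(b,x_2)=q(\phi(x,x_1),\phi(x,x_2))\sum_i\iota_{x_2,x_1}f_i(\psi(x_1-x_2))\,Y_\E^\phi(u_i,x_2)Y_\E^\phi(v_i,x_1).
\]
It then factors $q(x_1,x_2)=(x_1-x_2)^k\bar q(x_1,x_2)$ with $\bar q(x_2,x_2)\neq0$, observes that $q(\phi(x,x_1),\phi(x,x_2))=(x_1-x_2)^kQ(x,x_1,x_2)$ with $Q$ invertible in $\C((x))[[x_1,x_2]]$, and cancels $Q$ to obtain bona fide $(x_1-x_2)^k$-$S$-locality of the generating vertex operators — no delta-function decomposition and no auxiliary coefficients needed. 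Your argument would need the additional lemma about the $\gamma_{k,i}$'s; the paper's is both shorter and closes that hole by construction. Separately, your sketch of quasi-compatibility addresses only pairs (the definition requires arbitrary finite ordered sequences); the paper delegates this to \cite{li-qvta}, which is the clean way to handle it.
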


\begin{proof} Note that quasi $(\phi,S)$-locality is a special case of quasi $S(x_1,x_2)$-locality in the sense of \cite{li-qvta}.
Then by Proposition 4.6 therein,  $U$ is a quasi compatible, so that we have a nonlocal vertex algebra $\<U\>_{\phi}$.
For $a(x),b(x)$, let $u_i(x),v_i(x)\in U,\  f_i(z)\in \C(z)$ $(i=1,\dots,r)$ and
$0\ne q(x_1,x_2)\in \C[x_1,x_2]$ such that (\ref{phi-S-locality}) holds.
By Theorem \ref{prop:tech-calculation5}, we have
\begin{align}\label{q-S-locality}
&q(\phi(x,x_1),\phi(x,x_2))Y_{\E}^{\phi}(a(x),x_1)Y_{\E}^{\phi}(b(x),x_2)\nonumber\\
=\ &q(\phi(x,x_1),\phi(x,x_2))\sum_{i=1}^r\iota_{x_2,x_1}f_i(\psi(x_1-x_2))Y_{\E}^{\phi}(u_i(x),x_2)Y_{\E}^{\phi}(v_i(x),x_1).
\end{align}
As $q(x_1,x_2)\ne 0$, we have $q(x_1,x_2)=(x_1-x_2)^{k}\bar{q}(x_1,x_2)$
for some $k\in \N,\  \bar{q}(x_1,x_2)\in \C[x_1,x_2]$
with $\bar{q}(x_2,x_2)\ne 0$. Noticing that $\phi(x,z)=x+p(x)z+\frac{1}{2}p(x)p'(x)z^2+\cdots$, we have
$$q(\phi(x,x_1),\phi(x,x_2))=(x_1-x_2)^{k}Q(x,x_1,x_2)$$
for some $Q(x,x_1,x_2)\in \C((x))[[x_1,x_2]]$ with $Q(x,0,0)\ne 0$.
Noticing that $Q(x,x_1,x_2)$ is invertible in $\C((x))[[x_1,x_2]]$,
then  from (\ref{q-S-locality}) by cancellation we get
\begin{align*}
&(x_1-x_2)^kY_{\E}^{\phi}(a(x),x_1)Y_{\E}^{\phi}(b(x),x_2)\nonumber\\
=\ &(x_1-x_2)^k\sum_{i=1}^r\iota_{x_2,x_1}f_i(\psi(x_1-x_2))Y_{\E}^{\phi}(u_i(x),x_2)Y_{\E}^{\phi}(v_i(x),x_1).
\end{align*}
This shows that $\{Y_{\E}^{\phi}(a(z),x)\ |\  a(z)\in U\}$ is an $S$-local set of vertex operators
on the nonlocal vertex algebra $\<U\>_{\phi}$.
Since $\<U\>_{\phi}$ as a nonlocal vertex algebra is generated by $U$,
from \cite{ltw} (Proposition 2.6), $\<U\>_{\phi}$ is a weak quantum vertex algebra.
\end{proof}

\section{$(G,\chi_{\phi})$-equivariant $\phi$-coordinated quasi modules}

In this section, we study $(G,\chi_{\phi})$-equivariant $\phi$-coordinated quasi modules
for a general $(G,\chi)$-module nonlocal vertex algebra. The main result
(Theorem \ref{coro:G-va-abs-construct-non-h-adic}) is
 a refinement of Theorem \ref{thm:abs-construct-non-h-adic}.
Throughout this section, $G$ is a group equipped with a linear character $\chi$.

First, we formulate the following notion (cf.  \cite{li-new}, \cite{li-gamma}):

\begin{de}\label{def-Gchiva}
Let $G$ be a group with  a linear character $\chi: G\rightarrow \C^{\times}$.
A {\em $(G,\chi)$-module nonlocal vertex algebra} is a nonlocal vertex algebra $V$ equipped with
a group homomorphism $R:G\rightarrow \te{GL} (V)$  such that
$R(g)\vac=\vac$ and
\begin{align}
  R(g)Y(v,x)R(g)\inverse =Y(R(g)v,\chi(g)x)\quad\te{for }g\in G,\  v\in V.
\end{align}
We shall also denote a $(G,\chi)$-module nonlocal vertex algebra by a pair $(V,R)$.
\end{de}

\begin{rem}
{\em Notice that in Definition \ref{def-Gchiva}, for $g\in G$ with $\chi(g)=1$,
we have $R(g)\in \te{Aut}(V)$, where $\te{Aut}(V)$ denotes the automorphism group of $V$
as a nonlocal vertex algebra. Thus a $(G,\chi)$-module nonlocal vertex algebra with $\chi=1$
(the trivial character) is simply
a nonlocal vertex algebra on which  $G$ acts as an automorphism group.
In this case, we call $V$ a {\em $G$-module nonlocal vertex algebra.}}
\end{rem}

 For $(G,\chi)$-module nonlocal vertex algebras $(V,R)$ and $(V',R')$,
a {\em $(G,\chi)$-module nonlocal vertex algebra homomorphism} is a nonlocal vertex algebra homomorphism
$\psi:V\rightarrow V'$ such that
\begin{align}
&  \psi\circ R(g)=R'(g)\circ \psi\quad\te{for }g\in G.
\end{align}

The following is a convenient technical result:

\begin{lem}\label{lem:G-va-construct}
Suppose that $V$ is a nonlocal vertex algebra,
$\rho:G\rightarrow \te{Aut}(V)$ and $L:G\rightarrow \te{GL} (V)$
are  group homomorphisms such that $L(g){\bf 1}={\bf 1}$,
\begin{align*}
&\rho(g)L(g')=L(g')\rho(g)  \quad\te{for }g,g'\in G,\\
&L(g)Y(v,x)L(g)\inverse=Y(L(g)v,\chi(g)x)  \quad\te{for }g\in G,\   v\in S,
\end{align*}
where $S$ is a generating subset of $V$.
Define a map $R:G\rightarrow \te{GL} (V)$ by $R(g)= \rho(g)L(g)$ for $g\in G$.
Then $(V,R)$ is a $(G,\chi)$-module nonlocal vertex algebra.
\end{lem}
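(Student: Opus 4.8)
The plan is to verify directly that the pair $(V,R)$ with $R(g)=\rho(g)L(g)$ satisfies the three axioms of Definition \ref{def-Gchiva}: that $R$ is a group homomorphism, that $R(g)\vac=\vac$, and that the conjugation relation $R(g)Y(v,x)R(g)\inverse=Y(R(g)v,\chi(g)x)$ holds for all $v\in V$ (not just for $v\in S$). The first two are immediate from the hypotheses: since $\rho$ and $L$ commute pointwise and each is a homomorphism, $R(g)R(g')=\rho(g)L(g)\rho(g')L(g')=\rho(g)\rho(g')L(g)L(g')=\rho(gg')L(gg')=R(gg')$; and $R(g)\vac=\rho(g)L(g)\vac=\rho(g)\vac=\vac$ because $L(g)\vac=\vac$ and $\rho(g)$ is an automorphism (hence fixes the vacuum). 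So the substance is entirely in the conjugation relation.

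For the conjugation relation, I would first record what it says for $\rho$ and $L$ separately. Since $\rho(g)$ is an automorphism of the nonlocal vertex algebra $V$, we have $\rho(g)Y(v,x)\rho(g)\inverse=Y(\rho(g)v,x)$ for all $v\in V$. The hypothesis gives $L(g)Y(v,x)L(g)\inverse=Y(L(g)v,\chi(g)x)$ only for $v\in S$; the key first step is to upgrade this to all of $V$. Let $T=\set{v\in V}{L(g)Y(v,x)L(g)\inverse=Y(L(g)v,\chi(g)x)\text{ for all }g\in G}$. Then $S\subseteq T$ and $\vac\in T$ (using $L(g)\vac=\vac$ and $Y(\vac,x)=1$), so it suffices to show $T$ is closed under the operations $u_{(n)}v$ that generate $V$ from $S$, i.e. that $T$ is a nonlocal vertex subalgebra. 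This is the standard argument: if $u,v\in T$, choose $k\in\N$ with $(x_1-x_2)^kY(u,x_1)Y(v,x_2)\in\te{Hom}(V,V((x_1,x_2)))$, conjugate the weak associativity relation \eqref{nlva-associativity} by $L(g)$, use $L(g)Y(u,x_1)Y(v,x_2)L(g)\inverse=Y(L(g)u,\chi(g)x_1)Y(L(g)v,\chi(g)x_2)$ on the left side and track the substitution $x_1=x_2+x_0$ (which rescales to $\chi(g)x_1=\chi(g)x_2+\chi(g)x_0$), to conclude $L(g)Y(Y(u,x_0)v,x_2)L(g)\inverse=Y(L(g)(Y(u,\chi(g)x_0)v),\chi(g)x_2)$; hence $Y(u,x_0)v\in T[[x_0,x_0^{-1}]]$ coefficientwise, so each $u_{(n)}v\in T$. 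Therefore $T=V$.

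With the relation now known for $\rho$ on all of $V$ and for $L$ on all of $V$, I would simply compose: for $v\in V$ and $g\in G$,
\begin{align*}
R(g)Y(v,x)R(g)\inverse
&=\rho(g)L(g)Y(v,x)L(g)\inverse\rho(g)\inverse\\
&=\rho(g)Y(L(g)v,\chi(g)x)\rho(g)\inverse\\
&=Y(\rho(g)L(g)v,\chi(g)x)
=Y(R(g)v,\chi(g)x),
\end{align*}
which is exactly the axiom. This completes the verification that $(V,R)$ is a $(G,\chi)$-module nonlocal vertex algebra.

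\textbf{Main obstacle.} The only non-routine point is the bootstrapping step showing $T$ is a nonlocal vertex subalgebra, and within that, keeping careful track of how the scalar $\chi(g)$ propagates through the weak-associativity substitution $x_1=x_2+x_0$: one must check that conjugating by $L(g)$ sends the identity \eqref{nlva-associativity} for $(u,v)$ to the corresponding identity with the vertex operator arguments uniformly rescaled by $\chi(g)$, so that reading off coefficients yields $L(g)(u_{(n)}v)=(L(g)u)_{(n)}(L(g)v)$-type relations compatible with the $\chi(g)x$ rescaling. Everything else is formal. (One should also note at the outset that $S$ being a generating subset means $V$ is spanned by elements $u^{(1)}_{(n_1)}\cdots u^{(m)}_{(n_m)}\vac$ with $u^{(i)}\in S$, which is what licenses the induction.)
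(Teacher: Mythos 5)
Your proposal is correct and takes essentially the same approach as the paper: both define the set of vectors on which the $L$-conjugation relation holds (your $T$, the paper's $V'$), show it is a nonlocal vertex subalgebra containing $S$ and $\vac$ by conjugating the weak associativity identity and tracking the uniform $\chi(g)$-rescaling of $x_0,x_1,x_2$, and then compose with the automorphisms $\rho(g)$. The only cosmetic difference is that the paper carries out the "substitution $x_1=x_2+x_0$" step via the residue/delta-function formulation of \eqref{nlva-associativity}, and it explicitly arranges for the integer $k$ to serve simultaneously for the pair $(u,v)$ and for $(L(g)u,L(g)v)$, a small point your sketch leaves implicit.
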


\begin{proof} We first prove that $(V,L)$ is a $(G,\chi)$-module nonlocal vertex algebra.
For this we need to prove
\begin{align*}
  L(g)Y(v,x)L(g)\inverse=Y(L(g)v,\chi(g)x)
\end{align*}
for $g\in G,\ v\in V$.
Let $V'$  consist of those vectors $v\in V$ such that the above relation holds for all $g\in G$.
Then we must prove $V'=V$. As $S\cup \{ {\bf 1}\}\subset V'$ from assumption,
we need to prove that $V'$ is a nonlocal vertex subalgebra of $V$.
It remains to prove that $u_{n}v\in V'$ for $u,v\in V',\ n\in \Z$.
Let $u,v\in V'$ and let $g\in G$. There exists $k\in \N$ such that
\begin{align*}
&(x_1-x_2)^kY(u,x_1)Y(v,x_2)\in \te{Hom}(V,V((x_1,x_2))),\\
&(x_1-x_2)^kY(L(g)u,x_1)Y(L(g)v,x_2)\in \te{Hom}(V,V((x_1,x_2)))
\end{align*}
and
\begin{align*}
&z^kY(Y(u,z)v,x)= \te{Res}_{x_1}x_1\inverse\delta\(\frac{x+z}{x_1}\) \left((x_1-x)^kY(u,x_1)Y(v,x)\right),\\
&z^kY(Y(L(g)u,z)L(g)v,x)
= \te{Res}_{x_1}x_1\inverse\delta\(\frac{x+z}{x_1}\) \left((x_1-x)^kY(L(g)u,x_1)Y(L(g)v,x)\right).
\end{align*}
Then
\begin{align*}
&z^{k}L(g)Y(Y(u,z)v,x)L(g)\inverse\\
=\  &\te{Res}_{x_1}x_1\inverse\delta\(\frac{x+z}{x_1}\) \left((x_1-x)^kL(g)Y(u,x_1)Y(v,x)L(g)^{-1}\right)\\
=\  &\te{Res}_{x_1}x_1\inverse\delta\(\frac{x+z}{x_1}\) \left((x_1-x)^kY(L(g)u,\chi(g)x_1)Y(L(g)v,\chi(g)x)\right)\\
=\  &\chi(g)^{-k}\te{Res}_{\chi(g)x_1}
(\chi(g)x_1)\inverse\delta\(\frac{\chi(g)x+\chi(g)z}{\chi(g)x_1}\) \Big((\chi(g)x_1-\chi(g)x)^k\\
&\qquad\times
Y(L(g)u,\chi(g)x_1)Y(L(g)v,\chi(g)x)\Big)\\
=\  &\chi(g)^{-k}(\chi(g)z)^k
    Y\Big(Y(L(g)u,\chi(g)z)L(g)v,\chi(g)x\Big)\\
=\  &z^k Y\Big(L(g)Y(u,z)v,\chi(g)x\Big),
\end{align*}
which gives
$$L(g)Y(Y(u,z)v,x)L(g)\inverse=Y\Big(L(g)Y(u,z)v,\chi(g)x\Big).$$
This proves that $u_{n}v\in V'$ for $u,v\in V',\ n\in \Z$.
Thus $V'=V$. Therefore $(V,L)$ is a $(G,\chi)$-module nonlocal vertex algebra.
Furthermore, since $\rho$ is a group homomorphism from $G$ to the automorphism group $\te{Aut}(V)$,
it is straightforward to show that $(V,R)$ is a $(G,\chi)$-module nonlocal vertex algebra.
\end{proof}

For a subgroup $\Gamma$ of $\C^\times$, denote by $\C_{\Gamma}[x]$ the monoid
generated multiplicatively by polynomials $x-\alpha$ for $\alpha\in \Gamma$,
i.e., the set of all monic polynomials whose roots are contained in $\Gamma$.

\begin{de}\label{de:G-equiv-phi-mod}
Let $(V,R)$ be a $(G,\chi)$-module nonlocal vertex algebra and
 let $\chi_{\phi}: G\rightarrow \C^{\times}$ be a linear character of $G$.
A {\em $(G,\chi_{\phi})$-equivariant $\phi$-coordinated quasi $V$-module} is a $\phi$-coordinated quasi
$V$-module $(W,Y_W^\phi)$ satisfying the conditions that
\begin{align}\label{phi-module-equiv}
  Y_W^\phi\(R(g)v,x\)
  =Y_W^{\phi}(v,\chi_{\phi}(g)^{-1}x)\quad\te{for }g\in G,\  v\in V
\end{align}
and that for any $u,v\in V$, there exists $f(x)\in\C_{\chi_{\phi}(G)}[x]$
such that
\begin{align}\label{eq:quasi-compatible-temp-tt01}
  f(x_1/x_2)Y_W^\phi(u,x_1)Y_W^\phi(v,x_2)\in\te{Hom} (W,W((x_1,x_2))).
\end{align}
\end{de}

\begin{lem}\label{equivariance-character-compat}
Let $(V,R)$ be a $(G,\chi)$-module nonlocal vertex algebra and let $\chi_{\phi}: G\rightarrow \C^{\times}$ be a linear character of $G$.
Suppose that there exists a $(G,\chi_{\phi})$-equivariant $\phi$-coordinated quasi $V$-module
$(W,Y_{W}^{\phi})$ such that $\frac{d}{dx}Y_{W}^{\phi}(v,x)\ne 0$ for some $v\in V$. Then
\begin{align}\label{phi-chi-chiphi}
\phi(x,\chi(g)x_0)=\chi_{\phi}(g)\phi (\chi_{\phi}(g)^{-1}x,x_0)\   \   \   \mbox{ for }g\in G.
\end{align}
On the other hand,  (\ref{phi-chi-chiphi}) is equivalent to
\begin{align}\label{def-chi}
p(\chi_{\phi}(g)x)=\chi(g)^{-1}\chi_{\phi}(g)p(x)\   \   \mbox{ for }g\in G,
\end{align}
where  $\phi(x,z)=e^{zp(x)\partial_x}x$ with $p(x)\in \C((x))$.
\end{lem}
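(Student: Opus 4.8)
The plan is to derive \eqref{phi-chi-chiphi} from the two defining properties of a $(G,\chi_\phi)$-equivariant $\phi$-coordinated quasi module, namely the intertwining relation \eqref{phi-module-equiv} and the weak $\phi$-associativity \eqref{quasi-weak-assoc}, by running the associativity axiom through the substitution $x_1 = \phi(x_2,x_0)$ both for $v$ and for $R(g)v$ and then comparing. First I would fix $v\in V$ with $\tfrac{d}{dx}Y_W^\phi(v,x)\ne 0$ and $g\in G$. Applying \eqref{phi-module-equiv} to the vacuum-related element $\der v = v_{-2}\vac$, together with the relation $[\der,Y(v,x)]=\tfrac{d}{dx}Y(v,x)$ recorded in the paper and the fact that $R(g)$ fixes $\vac$ and is a $(G,\chi)$-module map (so $R(g)\der = \chi(g)^{-1}\der R(g)$ from $R(g)Y(\cdot,x)R(g)^{-1}=Y(R(g)\cdot,\chi(g)x)$), one gets on $W$ the identity
\begin{align*}
Y_W^\phi\big(\der (R(g)v),x\big) = \chi(g)^{-1}\,\frac{d}{dx}\Big(Y_W^\phi(v,\chi_\phi(g)^{-1}x)\Big) \cdot (\text{correction}),
\end{align*}
more precisely I would compute $\tfrac{d}{dx}Y_W^\phi(R(g)v,x)$ in two ways: directly via $Y_W^\phi(\der(R(g)v),x)$, and via \eqref{phi-module-equiv} by differentiating $Y_W^\phi(v,\chi_\phi(g)^{-1}x)$ through the chain rule. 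This produces a relation between $Y_W^\phi(\der v,\chi_\phi(g)^{-1}x)$, the scalar $\chi(g)^{-1}$, and $\chi_\phi(g)^{-1}$; since $\der v = v_{-2}\vac$ and the module is honest, the operators $Y_W^\phi(v,x)$ and $Y_W^\phi(\der v,x)$ are not proportional in general, so the only way the relation can hold is a constraint linking $\chi,\chi_\phi$ and the Taylor expansion of $\phi$ — but this is exactly the weak associativity information, not just a scalar identity, so I must feed in \eqref{quasi-weak-assoc}.

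The cleaner route, which I would actually carry out, is: take $u=\omega$-like generic $u$ and apply \eqref{quasi-weak-assoc} with $p(x_1,x_2)$ chosen for the pair $(R(g)u, R(g)v)$; using $Y_W^\phi(R(g)u,x_1)=Y_W^\phi(u,\chi_\phi(g)^{-1}x_1)$ and similarly for $v$, the left side becomes (after the rescaling $x_i\mapsto \chi_\phi(g)x_i$) the $\phi$-associativity datum for the pair $(u,v)$ but with $x_1=\phi(x_2,x_0)$ replaced by $\chi_\phi(g)^{-1}\phi(\chi_\phi(g)x_2,x_0)$, whereas the right side, via $Y(R(g)u,\chi(g)x_0)$ from the $(G,\chi)$-module axiom and $R(g)(u_n v)$-bookkeeping, forces the substitution point to be $\phi(x_2,\chi(g)x_0)$. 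Comparing the two substitution points in $\C((x_2))[[x_0]]$ — and using that the embedding $f(x_1,x_2)\mapsto f(\phi(x_2,x_0),x_2)$ is injective (Remark \ref{rem:about-phi1}), so the substitution point is uniquely determined — yields precisely
\begin{align*}
\phi(x_2,\chi(g)x_0) = \chi_\phi(g)\,\phi\big(\chi_\phi(g)^{-1}x_2,x_0\big),
\end{align*}
which is \eqref{phi-chi-chiphi} after renaming $x_2\to x$. The hypothesis $\tfrac{d}{dx}Y_W^\phi(v,x)\ne 0$ for some $v$ is what guarantees that $Y_W^\phi$ is not identically a "constant" map, so that the comparison is not vacuous and the substitution points genuinely must agree; without it both sides could collapse.

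For the equivalence of \eqref{phi-chi-chiphi} with \eqref{def-chi}, I would differentiate \eqref{phi-chi-chiphi} with respect to $x_0$ and set $x_0=0$: since $\partial_{x_0}\phi(x,x_0)|_{x_0=0}=p(x)$ (this is the identity $\partial_z\phi(x,z)=p(\phi(x,z))$ evaluated at $z=0$, used already in the proof of Lemma \ref{lem:delta-func-tech-calculation4}), the left side gives $\chi(g)p(x)$ and the right side gives $\chi_\phi(g)\cdot \chi_\phi(g)^{-1}p(\chi_\phi(g)^{-1}x)\cdot\chi_\phi(g) = \chi_\phi(g)\,p(\chi_\phi(g)^{-1}x)$; wait — carefully, $\partial_{x_0}\big[\chi_\phi(g)\phi(\chi_\phi(g)^{-1}x,x_0)\big]|_{x_0=0}=\chi_\phi(g)p(\chi_\phi(g)^{-1}x)$, so $\chi(g)p(x)=\chi_\phi(g)p(\chi_\phi(g)^{-1}x)$, i.e. replacing $x$ by $\chi_\phi(g)x$, $p(\chi_\phi(g)x)=\chi(g)^{-1}\chi_\phi(g)p(x)$, which is \eqref{def-chi}. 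Conversely, given \eqref{def-chi}, both sides of \eqref{phi-chi-chiphi} satisfy the same first-order ODE in $x_0$ — namely $\partial_{x_0}(\text{LHS})=\chi(g)\,p(\text{LHS})$ and one checks $\partial_{x_0}(\text{RHS})=\chi_\phi(g)p(\chi_\phi(g)^{-1}\text{RHS})$, and \eqref{def-chi} makes these coincide — with the same initial value $x$ at $x_0=0$, so by uniqueness of solutions of ODEs over $\C((x))[[x_0]]$ they agree. I expect the main obstacle to be the careful bookkeeping of the rescaling $x_i\mapsto \chi_\phi(g)x_i$ inside the $\operatorname{Res}_{x_1}$/$\delta$-function formulation of the substitution (as in the Remark after Definition \ref{def-nlva}), making sure the powers of $\chi_\phi(g)$ and $\chi(g)$ are tracked correctly so that the two substitution points are extracted unambiguously; the ODE-uniqueness half is routine.
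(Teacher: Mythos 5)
Your overall strategy is the same as the paper's: run the weak $\phi$-associativity once with $Y$ twisted by $R(g)$ on the $V$-side and once with $Y_W^{\phi}$ twisted by $\chi_{\phi}(g)^{-1}$ on the $W$-side, and then use the hypothesis $\frac{d}{dx}Y_W^{\phi}(v,x)\ne 0$ to force the two substitution points to coincide. The equivalence $\eqref{phi-chi-chiphi}\Leftrightarrow\eqref{def-chi}$ is argued by you and by the paper in essentially the same way (the associate $e^{zq(x)\partial_x}x$ is determined by $q$; you differentiate at $x_0=0$ for one direction and invoke uniqueness for the other, the paper invokes Proposition~\ref{prop:classification-assos} directly).

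Two places where your account is looser than it should be. First, the crucial simplification you leave implicit is to take the second factor to be the \emph{vacuum}: the paper applies associativity to the pair $(v,\vac)$, so that $Y_W^{\phi}(\vac,x_2)=1_W$, the compatibility polynomial can be taken to be $1$, and $Y(v,z)\vac$ is an honest $V[[z]]$-valued series. With a ``generic $u$'' as you propose, you have to carry the nonzero $f(x_1,x_2)$ through both sides and cancel it, and the bookkeeping of $\chi_{\phi}(g)$ in the substitution point becomes genuinely delicate --- the quantity you name as the pulled-back substitution point does not match a direct calculation, which is a symptom of exactly this bookkeeping. Specializing to $\vac$ makes the chain of equalities short and unambiguous, and this is the step you should make explicit. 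Second, appealing to the injectivity of $\pi_{\phi}$ in Remark~\ref{rem:about-phi1} is not quite the right tool to ``read off'' the substitution point: that remark says $f\mapsto f(\phi(x,z),x)$ is injective in $f$, not that $F\circ\phi_1 = F\circ\phi_2$ forces $\phi_1=\phi_2$. The correct argument (and the one the paper gives) is to fix a matrix coefficient $F(x)=\langle\alpha,Y_W^{\phi}(v,x)w\rangle$ with $F'\ne 0$, expand $F(\phi(x_2,\chi(g)x_0))$ and $F(\chi_{\phi}(g)\phi(\chi_{\phi}(g)^{-1}x_2,x_0))$ in $\C((x_2))[[x_0]]$, and compare $x_0$-coefficients to get $\chi(g)p(x_2)=\chi_{\phi}(g)p(\chi_{\phi}(g)^{-1}x_2)$ --- that is, \eqref{def-chi} --- and then pass to \eqref{phi-chi-chiphi} via the equivalence you prove in the second part. (Your inductive/ODE intuition, that $F'\ne 0$ lets you recover all Taylor coefficients, is sound and gives \eqref{phi-chi-chiphi} directly, but you should say so rather than invoke Remark~\ref{rem:about-phi1}.) Finally, the first paragraph of the proposal is a dead end you abandon; note in passing that $R(g)\der = \chi(g)\der R(g)$, not $\chi(g)^{-1}\der R(g)$ --- the index calculation for $v_{-2}\vac$ gives $\chi(g)^{+1}$.
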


\begin{proof} We first prove the second assertion. As $\phi(x,z)=e^{zp(x)\partial_x}x$, we have
\begin{eqnarray*}
&&\phi(x,\alpha x_0)=e^{\alpha x_0p(x)\partial_{x}}x,\\
&&\beta \phi(\beta^{-1}x,x_0)=\beta e^{x_0p(\beta^{-1}x)\partial_{\beta^{-1}x}}
(\beta^{-1}x)=e^{x_0\beta p(\beta^{-1}x)\partial_{x}}x
\end{eqnarray*}
 for any $\alpha,\beta\in \C^{\times}$. It then follows immediately that the second assertion holds.

To prove the first assertion, let $v\in V$ such that $\frac{d}{dx}Y_{W}^{\phi}(v,x)\ne 0$.
Recall that $Y_{W}^{\phi}({\bf 1},x)=1$ and $R(g){\bf 1}={\bf 1}$ for $g\in G$.
Notice that for $a,b\in V$, if $h(x_1,x_2)$ is any nonzero polynomial such that
$$h(x_1,x_2)Y_{W}^{\phi}(a,x_1)Y_{W}^{\phi}(b,x_2)\in {\rm Hom} (W,W((x_1,x_2))),$$
 then
$$\left(h(x_1,x_2)Y_{W}^{\phi}(a,x_1)Y_{W}^{\phi}(b,x_2)\right)|_{x_1=\phi(x_2,x_0)}
=h(\phi(x_2,x_0),x_2)Y_{W}^{\phi}(Y(a,x_0)b,x_2).$$
Using this fact and  (\ref{phi-module-equiv}), for $g\in G$ we get
\begin{eqnarray*}
&&\left(Y_{W}^{\phi}(v,x_1)Y_{W}^{\phi}({\bf 1},x_2)\right)|_{x_1=\phi(x_2,\chi(g)x_0)}\nonumber\\
&=&Y_{W}^{\phi}(Y(v,\chi(g)x_0){\bf 1},x_2)\nonumber\\
&=&Y_{W}^{\phi}(R(g)Y(R(g)^{-1}v,x_0){\bf 1},x_2)\nonumber\\
&=&Y_{W}^{\phi}(Y(R(g)^{-1}v,x_0){\bf 1},\chi_{\phi}(g)^{-1}x_2)\nonumber\\
&=&\left(Y_{W}^{\phi}(R(g)^{-1}v,x_1)Y_{W}^{\phi}({\bf 1},\chi_{\phi}(g)^{-1}x_2)\right)|_{x_1=\phi(\chi_{\phi}(g)^{-1}x_2,x_0)}\nonumber\\
&=&\left(Y_{W}^{\phi}(v,\chi_{\phi}(g)x_1)Y_{W}^{\phi}({\bf 1},x_2)\right)|_{x_1=\phi(\chi_{\phi}(g)^{-1}x_2,x_0)}\nonumber\\
&=&\left(Y_{W}^{\phi}(v,x_1)Y_{W}^{\phi}({\bf 1},x_2)\right)|_{x_1=\chi_{\phi}(g)\phi(\chi_{\phi}(g)^{-1}x_2,x_0)}.
\end{eqnarray*}
As $\frac{d}{dx}Y_{W}^{\phi}(v,x)\ne 0$, there exist $w\in W,\ \alpha\in W^{*}$ such that
$\frac{d}{dx}\langle \alpha, Y_{W}^{\phi}(v,x)w\rangle\ne 0$. Set $F(x)=\langle \alpha, Y_{W}^{\phi}(v,x)w\rangle\in \C((x))$.
Then we have $F'(x)\ne 0$ and
$$F(\phi(x_2,\chi(g)x_0))=F(\chi_{\phi}(g)\phi(\chi_{\phi}(g)^{-1}x_2,x_0)).$$
As
\begin{eqnarray*}
&&F(\phi(x_2,\chi(g)x_0))=e^{\chi(g)x_0p(x)\partial_{x_2}} F(x_2),\\
&&F\left(\chi_{\phi}(g)\phi(\chi_{\phi}(g)^{-1}x_2,x_0)\right)
=e^{x_0\chi_{\phi}(g) p(\chi_{\phi}(g)^{-1}x)\partial_{x_2}}F(x_2),
\end{eqnarray*}
 by extracting the coefficients of $x_0$ we obtain (\ref{phi-chi-chiphi}).
\end{proof}

\begin{rem}\label{trivial-case}
{\em Under the setting of Lemma \ref{equivariance-character-compat},
assume that $(W,Y_{W}^{\phi})$ is a $(G,\chi_{\phi})$-equivariant $\phi$-coordinated quasi $V$-module
 such that $\frac{d}{dx}Y_{W}^{\phi}(v,x)=0$ for all $v\in V$.
It is straightforward to show that the quotient nonlocal vertex algebra
$A:=V/(\ker Y_{W}^{\phi})$ is simply an associative algebra on which $G$ acts trivially
and $W$ is a (faithful) $A$-module.}
\end{rem}

In view of Lemma \ref{equivariance-character-compat} and Remark \ref{trivial-case},
from now on {\em we shall always assume that (\ref{phi-chi-chiphi})
holds for $(G,\chi_{\phi})$-equivariant $\phi$-coordinated quasi $V$-modules} unless it is stated otherwise.

\begin{rem}\label{lem:compatible-associate}
{\em Let $\Gamma$ be a subgroup of $\C^{\times}$ and let $p(x)\in \C((x))$ nonzero.
It is straightforward to show that there exists a linear character
$\lambda: \Gamma\rightarrow \C^{\times}$ such that
\begin{align}
p(cx)=\lambda(c)  p(x) \   \   \   \mbox{ for }c\in \Gamma
\end{align}
if and only if $p(x)=x^r$ for some $r\in \Z$ when $|\Gamma|=\infty$
and
$p(x)=x^rp_{0}(x^{m})$ for some $r\in \Z, \ p_0(x)\in \C[[x]]$ with $p_0(0)\ne 0$ when $|\Gamma|=m<\infty$.}
\end{rem}

\begin{rem}
{\em  Consider the case $\phi(x,z)=x+z$ with $p(x)=1$. Then the condition (\ref{def-chi}) amounts to  $\chi=\chi_{\phi}$.
In this case, the notion of $(G,\chi_{\phi})$-equivariant $\phi$-coordinated
quasi $V$-module coincides with the notion of  $G$-equivariant quasi $V$-module introduced in \cite{li-new}.
On the other hand, suppose $\phi(x,z)=xe^z$ with $p(x)=x$. From (\ref{def-chi}), we have $\chi=1$, so that
$G$ acts on $V$ as an automorphism group.
In this case, $\chi_{\phi}$ can be any linear character  of $G$ and
the notion of $(G,\chi_{\phi})$-equivariant $\phi$-coordinated quasi $V$-module coincides with the
same named notion introduced in \cite{li-jmp} with $\chi=\chi_{\phi}^{-1}$.}
\end{rem}

The following technical result is analogous to Lemma \ref{lem:G-va-construct}:

\begin{lem}\label{lem:G-equiv-phi-mod-construct}
Under the setting of Definition \ref{de:G-equiv-phi-mod}, assume that all the conditions hold except
(\ref{phi-module-equiv}), and instead assume
\begin{align}
  Y_W^\phi(R(g)v,x)=Y_W^\phi(v,\chi_{\phi}(g)^{-1}x)\quad\te{for }g\in G,\  v\in S,
\end{align}
where $S$ is a generating subset of $V$.
Then $W$ is a $(G,\chi_{\phi})$-equivariant $\phi$-coordinated quasi $V$-module.
\end{lem}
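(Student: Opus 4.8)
The plan is to imitate the proof of Lemma~\ref{lem:G-va-construct}. Let $V'$ be the set of all $v\in V$ for which \eqref{phi-module-equiv} holds, i.e. $Y_W^\phi(R(g)v,x)=Y_W^\phi(v,\chi_\phi(g)^{-1}x)$ for every $g\in G$. This is clearly a subspace of $V$; by hypothesis it contains the generating set $S$, and since $R(g)\vac=\vac$ and $Y_W^\phi(\vac,x)=1_W$ it also contains $\vac$. Hence it suffices to show that $V'$ is closed under the products $u_nv$ for $u,v\in V'$ and $n\in\Z$: then $V'$ is a nonlocal vertex subalgebra of $V$ containing $S$, so $V'=V$, which establishes \eqref{phi-module-equiv} for all $v\in V$ — the only hypothesis of Definition~\ref{de:G-equiv-phi-mod} not already assumed.

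So I fix $u,v\in V'$ and $g\in G$, and pick a nonzero $f(x_1,x_2)\in\C[[x_1,x_2]]$ witnessing the quasi-weak-associativity \eqref{quasi-weak-assoc} for the pair $(u,v)$. Using the equivariance of $u$ and $v$ one checks that $\tilde f(x_1,x_2):=f(\chi_\phi(g)^{-1}x_1,\chi_\phi(g)^{-1}x_2)$ is a corresponding witness for the pair $(R(g)u,R(g)v)$. I then compute $\bigl(\tilde f(x_1,x_2)Y_W^\phi(R(g)u,x_1)Y_W^\phi(R(g)v,x_2)\bigr)\big|_{x_1=\phi(x_2,z)}$ in two ways. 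Applying \eqref{quasi-weak-assoc} to $(R(g)u,R(g)v)$ directly and using $Y(R(g)u,z)R(g)v=R(g)\bigl(Y(u,\chi(g)^{-1}z)v\bigr)$, which follows from $R(g)Y(u,x)R(g)\inverse=Y(R(g)u,\chi(g)x)$, this expression equals $\tilde f(\phi(x_2,z),x_2)\,Y_W^\phi\bigl(R(g)(Y(u,\chi(g)^{-1}z)v),x_2\bigr)$. On the other hand, replacing $Y_W^\phi(R(g)u,x_1)$ and $Y_W^\phi(R(g)v,x_2)$ by $Y_W^\phi(u,\chi_\phi(g)^{-1}x_1)$ and $Y_W^\phi(v,\chi_\phi(g)^{-1}x_2)$, and invoking the compatibility identity \eqref{phi-chi-chiphi} to see that under the rescaling $x_i\mapsto\chi_\phi(g)^{-1}x_i$ the substitution $x_1=\phi(x_2,z)$ turns into $\chi_\phi(g)^{-1}x_1=\phi(\chi_\phi(g)^{-1}x_2,\chi(g)^{-1}z)$, I can apply \eqref{quasi-weak-assoc} to $(u,v)$ with $\chi(g)^{-1}z$ in place of $z$ and conclude, again via \eqref{phi-chi-chiphi}, that the same expression equals $\tilde f(\phi(x_2,z),x_2)\,Y_W^\phi\bigl(Y(u,\chi(g)^{-1}z)v,\chi_\phi(g)^{-1}x_2\bigr)$.

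Comparing the two and cancelling the common factor $\tilde f(\phi(x_2,z),x_2)$ — which is nonzero since $\phi(x,z)\neq x$ forces $f(\phi(x,z),x)\neq0$ for nonzero $f$ — yields
$$Y_W^\phi\bigl(R(g)(Y(u,\chi(g)^{-1}z)v),x_2\bigr)=Y_W^\phi\bigl(Y(u,\chi(g)^{-1}z)v,\chi_\phi(g)^{-1}x_2\bigr),$$
and extracting the coefficients of powers of $z$ gives $Y_W^\phi(R(g)(u_nv),x_2)=Y_W^\phi(u_nv,\chi_\phi(g)^{-1}x_2)$ for all $n$, i.e. $u_nv\in V'$, completing the induction. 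I expect the main obstacle to be purely organizational: one must carry the scalar substitutions $x_i\mapsto\chi_\phi(g)^{\pm1}x_i$ and $z\mapsto\chi(g)^{\pm1}z$ through the substitution $x_1=\phi(x_2,z)$ consistently, and the identity \eqref{phi-chi-chiphi} (equivalently \eqref{def-chi}) is exactly what guarantees this goes through; once that bookkeeping is arranged, the computation is formal and runs parallel to the proof of Lemma~\ref{lem:G-va-construct}.
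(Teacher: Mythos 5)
Your proof is correct and follows essentially the same strategy as the paper: define $V'$ as the set of vectors for which \eqref{phi-module-equiv} holds, observe that it contains $S$ and $\vac$, and show it is closed under the $u_nv$ products by evaluating the substitution $x_1=\phi(x_2,z)$ in two ways and invoking the compatibility identity \eqref{phi-chi-chiphi}. The only difference is bookkeeping: the paper exploits the structure given by \eqref{eq:quasi-compatible-temp-tt01}, taking a single $f(x)\in\C_{\chi_\phi(G)}[x]$ so that $f(x_1/x_2)$ is invariant under the simultaneous rescaling $x_i\mapsto\chi_\phi(g)^{-1}x_i$ and hence serves as a common witness for both $(u,v)$ and $(R(g)u,R(g)v)$ with no modification; you instead take a generic $f(x_1,x_2)\in\C[[x_1,x_2]]$ and rescale it to $\tilde f(x_1,x_2)=f(\chi_\phi(g)^{-1}x_1,\chi_\phi(g)^{-1}x_2)$ to obtain the witness for $(R(g)u,R(g)v)$. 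The paper then phrases the comparison via the residue formula $\te{Res}_{x_1}x_1^{-1}\delta(\phi(x,z)/x_1)(\cdots)$ while you apply the weak $\phi$-associativity axiom directly, but these are the same computation. One small point worth being explicit about: your claim that $\tilde f$ "is a corresponding witness" tacitly uses the standard fact that once the compatibility $\tilde f(x_1,x_2)Y_W^\phi(R(g)u,x_1)Y_W^\phi(R(g)v,x_2)\in\te{Hom}(W,W((x_1,x_2)))$ holds, the substitution identity \eqref{quasi-weak-assoc} with that same $\tilde f$ follows automatically — this is exactly the remark the paper invokes inside its proof of Lemma~\ref{equivariance-character-compat}, so the step is fine, but it deserves a sentence.
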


\begin{proof}
Let $V'$ consist of all vectors $v\in V$ such that
\begin{align*}
  Y_W^\phi(R(g)v,x)=Y_W^\phi(v,\chi_{\phi}(g)^{-1}x)\quad\te{for }g\in G.
\end{align*}
Let $u,v\in V'$ and let $g\in G$.
Notice that there exists $f(x)\in\C_{\chi_{\phi}(G)}[x]$ such that
\begin{align*}
f(x_1/x)Y_W^\phi(u,x_1)Y_W^\phi(v,x),\ f(x_1/x)Y_W^\phi(R(g)u,x_1)Y_W^\phi(R(g)v,x)
\in \te{Hom}(W,W((x_1,x_2))),
\end{align*}
and
\begin{align*}
f(\phi(x,z)/x)Y_W^\phi(Y(u,z)v,x)= \te{Res}_{x_1}x_1\inverse\delta\(\frac{\phi(x,z)}{x_1}\)
\left(f(x_1/x)Y_W^\phi(u,x_1)Y_W^\phi(v,x)\right),
\end{align*}
\begin{align*}
&f(\phi(x,z)/x)Y_W^\phi(Y(R(g)u,z)R(g)v,x)\\
=& \te{Res}_{x_1}x_1\inverse\delta\(\frac{\phi(x,z)}{x_1}\)
\left(f(x_1/x)Y_W^\phi(R(g)u,x_1)Y_W^\phi(R(g)v,x)\right).
\end{align*}
Using all of these and  (\ref{phi-chi-chiphi}) we get
\begin{align*}
&f(\phi(x,\chi(g)z)/x)Y_W^\phi(R(g)Y(u,z)v,x)\\
=&f(\phi(x,\chi(g)z)/x)Y_W^\phi(Y(R(g)u,\chi(g)z)R(g)v,x)\\
=&\te{Res}_{x_1}x_1\inverse\delta\(\frac{\phi(x,\chi(g)z)}{x_1}\)
\left(f(x_1/x)Y_W^\phi(R(g)u,x_1)Y_W^\phi(R(g)v,x)\right)\\
=&\te{Res}_{x_1}x_1\inverse\delta\(\frac{\phi(x,\chi(g)z)}{x_1}\)
\left(f(x_1/x)Y_W^\phi(u,\chi_{\phi}(g)^{-1}x_1)Y_W^\phi(v,\chi_{\phi}(g)^{-1}x)\right)\\
=&\te{Res}_{x_1}x_1\inverse\delta\(\frac{\phi(\chi_{\phi}(g)^{-1}x,z)}{\chi_{\phi}(g)^{-1}x_1}\)
\left(f(x_1/x)Y_W^\phi(u,\chi_{\phi}(g)^{-1}x_1)Y_W^\phi(v,\chi_{\phi}(g)^{-1}x)\right)\\
=&f(\phi(\chi_{\phi}(g)^{-1}x,z)/\chi_{\phi}(g)^{-1}x)Y_W^\phi(Y(u,z)v,\chi_{\phi}(g)^{-1}x)\\
=&f(\phi(x,\chi(g)z)/x)Y_W^\phi(Y(u,z)v,\chi_{\phi}(g)^{-1}x).
\end{align*}
Multiplying both sides by $f(\phi(x,\chi(g)z)/x)\inverse$ in $\C((x))((z))$,
we get
\begin{align*}
Y_W^\phi(R(g)Y(u,z)v,x)=
Y_W^\phi(Y(u,z)v,\chi_{\phi}(g)^{-1}x).
\end{align*}
This proves that $u_{n}v\in V'$ for $u,v\in V',\ n\in \Z$.
Hence, $V'$ is a nonlocal vertex subalgebra of $V$. As $S\subset V'$ by assumption, we must have $V'=V$.
Therefore, $W$ is a $(G,\chi_{\phi})$-equivariant $\phi$-coordinated quasi $V$-module.
\end{proof}

In the following, we shall get a refinement of Theorem \ref{thm:abs-construct-non-h-adic} in terms of
$(G,\chi)$-module nonlocal vertex algebras and $(G,\chi_{\phi})$-equivariant $\phi$-coordinated quasi modules.
Let $W$ be a vector space and let $G$ be a subgroup of $\C^\times$.
For $g\in G$, define a linear automorphism $R_g$ of $\E(W)$ by
\begin{align}\label{def-nug}
R_g(a(x))=a(g^{-1} x)\    \    \    \mbox{  for }a(x)\in \E(W).
\end{align}
This gives a group homomorphism  $R$ from $G$ to $\te{GL}(\E(W))$.

Let $\phi(x,z)=\exp(zp(x)\frac{d}{dx})x$ with
\begin{eqnarray}\label{special-p(x)}
p(x)=\begin{cases}x^{r+1}  &\mbox{  if }|G|=\infty\\
x^{r+1} p_0(x^m)&  \mbox{  if } |G|=m,
\end{cases}
\end{eqnarray}
where $r\in \Z$, $p_0(x)\in \C[[x]]$ such that $p_0(0)\ne 0$.
Take $\chi_{\phi}$ to be the natural embedding of $G$ into $\C^{\times}$, i.e.,
\begin{eqnarray}
\chi_{\phi}(g)=g\   \   \    \mbox{ for }g\in G.
\end{eqnarray}
Then using $\chi_{\phi}$ we define a linear character $\chi:  G\rightarrow \C^{\times}$ by
(\ref{def-chi}), i.e.,
\begin{align}
 \chi(g)^{-1}=\frac{p(\chi_{\phi}(g)x)}{\chi_{\phi}(g)p(x)}=\frac{p(gx)}{gp(x)}\   \   \   \mbox{ for }g\in G.
\end{align}
More explicitly, we have
\begin{align}
 \chi(g)=g^{-r}\   \   \   \mbox{ for }g\in G.
\end{align}
We also have
\begin{align}\label{phi-gx}
\phi(g^{-1}x,x_0)=g^{-1}\phi(x,\chi(g)x_0)\   \   \   \mbox{ for }g\in G.
\end{align}

\begin{de}
A subset $U$ of $\E(W)$ is said to be {\em $G$-quasi compatible} if
 for any finite sequence $a_1(x),a_2(x),\dots,a_k(x)\in U$, there exists
$ f(x)\in \C_{G}[x]$ such that
\begin{align*}
  \left(\prod_{1\le i<j\le k}f(x_i/x_j)\right)a_1(x_1)a_2(x_2)\cdots a_k(x_k)\in\te{Hom}(W,W((x_1,\dots,x_k))).
\end{align*}
\end{de}

From definition, it is clear that any $G$-quasi compatible subset of $\E(W)$ is quasi compatible.
Let $U$ be a $G$-quasi compatible subset of $\E(W)$.
In view of Theorem \ref{thm:abs-construct-non-h-adic},
$U$ generates a nonlocal vertex algebra $\<U\>_\phi$ with $W$ as a $\phi$-coordinated quasi module.
The following is a refinement of Theorem \ref{thm:abs-construct-non-h-adic}:

\begin{thm}\label{coro:G-va-abs-construct-non-h-adic}
Let $W$ be a vector space and let $G$ be a subgroup of $\C^\times$.
Assume that $U$ is a $G$-quasi compatible subset of $\E(W)$, which is $G$-stable.
Then the nonlocal vertex algebra $\<U\>_\phi$ together with the group homomorphism $R$
defined in (\ref{def-nug}) is a $(G,\chi)$-module nonlocal vertex algebra
and $W$ is a $(G,\chi_{\phi})$-equivariant $\phi$-coordinated quasi $\<U\>_\phi$-module with
$Y_W(a(x),z)=a(z)$ for $a(x)\in \<U\>_\phi$.
\end{thm}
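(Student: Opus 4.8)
The plan is to build everything on top of Theorem~\ref{thm:abs-construct-non-h-adic}: by that result $\<U\>_\phi$ is already a nonlocal vertex algebra and $W$ is a $\phi$-coordinated quasi $\<U\>_\phi$-module with $Y_W(a(x),z)=a(z)$, so the remaining work is to show that the operators $R_g$ of \eqref{def-nug} preserve $\<U\>_\phi$ and realize the axioms of Definitions~\ref{def-Gchiva} and~\ref{de:G-equiv-phi-mod}. Everything will flow from one commutation identity between $R_g$ and the $\phi$-product on $\E(W)$.

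\textbf{Step 1: a commutation identity.} For a quasi-compatible pair $(a(x),b(x))$ in $\E(W)$ I would prove
\[
R_g\bigl(Y_\E^\phi(a(x),z)b(x)\bigr)=Y_\E^\phi\bigl(R_g a(x),\chi(g)z\bigr)\,R_g b(x)\qquad(g\in G),
\]
equivalently $R_g\bigl(a(x)_n^\phi b(x)\bigr)=\chi(g)^{-n-1}(R_ga(x))_n^\phi(R_gb(x))$ for $n\in\Z$. For the proof, choose nonzero $f(x_1,x_2)\in\C[[x_1,x_2]]$ with $f(x_1,x_2)a(x_1)b(x_2)\in\E^{(2)}(W)$, write out the definition of $Y_\E^\phi(a(x),z)b(x)$, and apply $R_g$, i.e.\ substitute $x\mapsto g^{-1}x$ in the free variable. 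Using the identity~\eqref{phi-gx}, $\phi(g^{-1}x,z)=g^{-1}\phi(x,\chi(g)z)$, together with the reindexing $x_1\mapsto g^{-1}x_1$, the resulting expression is precisely the definition of $Y_\E^\phi(R_ga(x),\chi(g)z)R_gb(x)$ with compatibility polynomial $\tilde f(x_1,x_2):=f(g^{-1}x_1,g^{-1}x_2)$ (which is nonzero and satisfies $\tilde f(x_1,x_2)R_ga(x_1)R_gb(x_2)\in\E^{(2)}(W)$, being the image of $f(x_1,x_2)a(x_1)b(x_2)$ under $x_i\mapsto g^{-1}x_i$). Comparing coefficients of powers of $z$ gives the component form.

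\textbf{Step 2: $R$ preserves $\<U\>_\phi$ and yields a $(G,\chi)$-module structure.} Put $V'=\{v\in\<U\>_\phi\mid R_gv\in\<U\>_\phi\text{ for all }g\in G\}$. It is a subspace; it contains $1_W$ (as $R_g1_W=1_W$) and $U$ (as $U$ is $G$-stable); and it is quasi-compatible, being a subset of $\<U\>_\phi$. By Step~1 it is $Y_\E^\phi$-closed: for $a,b\in V'$, $R_g(a_n^\phi b)=\chi(g)^{-n-1}(R_ga)_n^\phi(R_gb)\in\<U\>_\phi$ since $\<U\>_\phi$ is $Y_\E^\phi$-closed, so $a_n^\phi b\in V'$. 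By the minimality in Theorem~\ref{thm:abs-construct-non-h-adic}, $V'=\<U\>_\phi$; hence $R_g(\<U\>_\phi)\subseteq\<U\>_\phi$ for every $g$, and applying this to $g^{-1}$ gives $R_g(\<U\>_\phi)=\<U\>_\phi$, so $R$ restricts to a homomorphism $G\to\te{GL}(\<U\>_\phi)$. Then $R_g1_W=1_W$ and the operator form of the identity in Step~1, namely $R(g)Y_\E^\phi(v,x)R(g)^{-1}=Y_\E^\phi(R(g)v,\chi(g)x)$, are exactly the two defining conditions of a $(G,\chi)$-module nonlocal vertex algebra.

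\textbf{Step 3: the equivariant module conditions.} Condition~\eqref{phi-module-equiv} is immediate since, $\chi_\phi$ being the natural embedding, $Y_W^\phi(R_gv,x)=(R_gv)(x)=v(g^{-1}x)=Y_W^\phi(v,\chi_\phi(g)^{-1}x)$. For condition~\eqref{eq:quasi-compatible-temp-tt01} one must show that $\<U\>_\phi$ is $G$-quasi compatible (note $\C_{\chi_\phi(G)}[x]=\C_G[x]$); I expect this to be the main obstacle. I would prove it by induction on the complexity of elements of $\<U\>_\phi$ as iterated $\phi$-products of $U\cup\{1_W\}$, paralleling the very construction of $\<U\>_\phi$ (cf.\ \cite[Lemma~4.7]{li-cmp}); the base case is the hypothesis that $U$ is $G$-quasi compatible. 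For the inductive step, suppose $\alpha,\beta,\gamma_1,\dots,\gamma_m$ admit a common compatibility polynomial $f(x_1,x_2)=\prod_i(x_1-c_ix_2)$ with all $c_i\in G$. Substituting $x_1=\phi(x,z)$ in the formula defining $Y_\E^\phi(\alpha,z)\beta$, and using that $f(\phi(x,z),x_2)=\prod_i(\phi(x,z)-c_ix_2)$ has invertible $x_2^0$-coefficient $\phi(x,z)^m$ in $\C((x))[[z]]$ (hence is a unit in $\C((x))[[z,x_2]]$), while $f(\phi(x,z),x)=\prod_i(\phi(x,z)-c_ix)$ differs from a unit of $\C((x))[[z]]$ only by a power of $z$, one sees that $f(x_1,x_2)$ again serves as a compatibility polynomial for $\alpha_n^\phi\beta,\gamma_1,\dots,\gamma_m$. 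Clearing a Laurent monomial turns $f(x_1,x_2)$ into a polynomial in $x_1/x_2$ with roots in $G$, which gives~\eqref{eq:quasi-compatible-temp-tt01} and completes the proof.
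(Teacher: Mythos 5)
Your proposal follows essentially the same route as the paper's proof. Your Step~1 is exactly the paper's derivation of (\ref{quasi-autom}) from (\ref{phi-gx}) and the definition of $Y_\E^\phi$; your Step~2 is a slightly more explicit rendering, via the minimality in Theorem~\ref{thm:abs-construct-non-h-adic}, of the paper's terser observation that ``$U$ is $G$-stable and $\<U\>_\phi$ is generated by $U$''; and the first half of your Step~3 is verbatim the paper's verification of (\ref{phi-module-equiv}).

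The one place you diverge from the paper is the $G$-quasi compatibility of $\<U\>_\phi$, which is needed for condition (\ref{eq:quasi-compatible-temp-tt01}). The paper does not re-derive this but simply refers to ``the proof of Proposition~4.9 of \cite{li-cmp}''. You correctly identify this as the genuine technical obstacle and sketch a self-contained induction on iterated $\phi$-products. The sketch is in the right spirit: the compatibility factors involving $\phi(x,z)$ and the ``outer'' variables $x_j$ are units of $\C((x))[[z,x_j]]$ because their $x_j^0$-coefficients are powers of $\phi(x,z)$, while the factor $f(\phi(x,z),x)$ is $z^k$ times a unit, so the definition of $Y_\E^\phi$ applies. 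However, the specific assertion that the \emph{same} polynomial $f(x_1,x_2)=\prod_i(x_1-c_ix_2)$ serves as a compatibility polynomial for $\alpha_n^\phi\beta,\gamma_1,\dots,\gamma_m$ is too strong and not justified by your argument; in this kind of induction (already in the additive case $\phi(x,z)=x+z$) one generally has to pass to a higher power of $f$, precisely because inverting the unit factors and extracting coefficients of $z^n$ changes the orders of poles. That does not damage the conclusion --- $\C_G[x]$ is closed under multiplication, so any power of $f$ still has all its roots in $G$ --- but as written the inductive step would not close. If you replace ``$f$ again serves'' by ``some power of $f$ serves'' and track the bound as in \cite[Proposition~4.9]{li-cmp}, the argument goes through, and the result is the same as the paper's.
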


\begin{proof}
Let $(a(x),b(x))$ be any quasi-compatible pair in $\E(W)$. Then for any $g,h\in G$,
$(a(gx),b(hx))$ is a quasi-compatible pair in $\E(W)$. More specifically,
assume that $f(x)$ is a nonzero polynomial such that
$$f(x_1/x_2)a(x_1)b(x_2)\in \te{Hom}(W,W((x_1,x_2))).$$
Then
$$f(gh^{-1}x_1/x_2)a(gx_1)b(hx_2)\in \te{Hom}(W,W((x_1,x_2))).$$
From definition we have
\begin{align}\label{3.13-first}
\left(f(x_1/g^{-1}x)a(x_1)b(g^{-1}x)\right)|_{x_1=\phi(g^{-1}x,x_0)}
=f(\phi(g^{-1}x,x_0)/g^{-1}x)R_g \left(Y_{\E}^{\phi}(a(x),x_0)b(x)\right).
\end{align}
Using (\ref{phi-gx}) and (\ref{3.13-first}) we get
\begin{align*}
&f(\phi(x,\chi(g)x_0)/x)R_g \left(Y_{\E}^{\phi}(a(x),x_0)b(x)\right)\\
=&\ f(\phi(g^{-1}x,x_0)/g^{-1}x)R_g \left(Y_{\E}^{\phi}(a(x),x_0)b(x)\right)\\
=& \left(f(x_1/g^{-1}x)a(x_1)b(g^{-1}x)\right)|_{x_1=g^{-1}\phi(x,\chi(g)x_0)}\\
=&\left(f(x_1/x)a(g^{-1}x_1)b(g^{-1}x)\right)|_{x_1=\phi(x,\chi(g)x_0)}\\
=&\ f(\phi(x,\chi(g)x_0)/x)Y_{\E}^{\phi}\left(R_{g}(a(x)),\chi(g)x_0\right)R_{g}(b(x)),
\end{align*}
which implies
\begin{align}
&R_g \left(Y_{\E}^{\phi}(a(x),x_0)b(x)\right)=Y_{\E}^{\phi}\left(R_{g}(a(x)),\chi(g)x_0\right)R_{g}(b(x)).
\label{quasi-autom}
\end{align}
As $U$ is $G$-stable and $\<U\>_{\phi}$ as a nonlocal vertex algebra is generated by $U$,
 it follows from (\ref{quasi-autom}) that $R_g$ preserves $\<U\>_\phi$ for every $g\in G$.
It also follows  from (\ref{quasi-autom}) that the nonlocal vertex algebra $\<U\>_\phi$ together with the group homomorphism $R$
is a $(G,\chi)$-module nonlocal vertex algebra.

From Theorem \ref{thm:abs-construct-non-h-adic}, $W$ is naturally a $\phi$-coordinated quasi $\<U\>_\phi$-module.
For $a(x)\in \<U\>_\phi, \ g\in G$, we have
 $$Y_{W}(R_{g}a(x),z)=Y_{W}(a(g^{-1}x),z)=a(g^{-1}z)=Y_{W}(a(x),g^{-1}z)=Y_{W}(a(x),\chi_{\phi}(g)^{-1}z).$$
On the other hand, following the proof of \cite[Proposition 4.9]{li-cmp}, we see that $\<U\>_\phi$ is also $G$-quasi compatible.
Therefore, $W$ is a $(G,\chi_{\phi})$-equivariant $\phi$-coordinated quasi $\<U\>_\phi$-module.
\end{proof}

\section{$\phi$-coordinated quasi modules for lattice vertex algebras}\label{sec:lattice-vertex-algebras}

In this section, we shall use the general results in Section 3 and a result of Lepowsky
to construct $\phi$-coordinated quasi modules for lattice vertex algebras.

\subsection{Lattice vertex algebras}
In this subsection, we recall the construction of vertex algebras associated to non-degenerate even lattices
and some results involving the associative algebra $A(L)$ introduced in \cite{LL}.

We start by briefly recalling the construction of lattice vertex algebras.
(For a detailed exposition, see \cite[Section 6.4--6.5]{LL} for example.)
Let $L$ be a finite rank  even lattice  in the sense that $L$ is a free abelian group of finite rank equipped with a
symmetric $\Z$-valued bilinear form $\<\cdot,\cdot\>$ such that $\<\alpha,\alpha\>\in 2\Z$ for $\alpha\in L$.
We assume that $L$ is nondegenerate in the obvious sense.
Set $$\h=\C\ot_{\Z}L$$
 and extend $\<\cdot,\cdot\>$ to a symmetric $\C$-valued bilinear form on $\h$.
View $\h$ as an abelian Lie algebra with $\<\cdot,\cdot\>$ as a nondegenerate symmetric invariant bilinear form.
Then we have an affine Lie algebra $\wh \h$. By definition,
$$\wh \h=\h\otimes \C[t,t^{-1}]+\C {\bf k}$$
as a vector space, where ${\bf k}$ is central and
\begin{eqnarray}
[\alpha(m),\beta(n)]=m\delta_{m+n,0}\<\alpha,\beta\>{\bf k}
\end{eqnarray}
for $\alpha,\beta\in \h,\ m,n\in \Z$ with $\alpha(m)$ denoting $\alpha\otimes t^{m}$.  Set
\begin{eqnarray}
\wh \h^{\pm}=\h\otimes t^{\pm 1} \C[t^{\pm 1}],
\end{eqnarray}
which are abelian Lie subalgebras. Identify $\h$ with $\h\otimes t^{0}$.
Notice that the center of $\wh \h$ equals $\h+\C {\bf k}$.
Furthermore, set
\begin{eqnarray}
\wh \h'=\wh \h^{+}+\wh \h^{-}+\C {\bf k},
\end{eqnarray}
which is a Heisenberg algebra. Then $\wh \h=\wh \h'\oplus \h$, which is a direct sum of Lie algebras.

Denote by $L^{o}$  the dual lattice of $L$, i.e.,
\begin{eqnarray}
L^{o}=\set{\al\in\h }{\<\al,\be\>\in \Z\  \mbox{ for }\beta\in L}.
\end{eqnarray}
Fix a positive integer $s$ such that
$$s\<\alpha, \beta\>\in \Z\   \   \   \mbox{ for }\alpha,\beta\in L^o.$$
Furthermore, set $\omega=e^{\pi i/s}$, the principal primitive $2s$-th  root of unity.

Let $\epsilon:L^o\times L^o\rightarrow \C^\times$ be a 2-cocycle such that
\begin{eqnarray*}
&&\epsilon(\alpha,\beta)\epsilon(\beta,\alpha)^{-1}=\omega^{s\<\al,\be\>},\\
&&\epsilon(\alpha,0)=1=\epsilon(0,\alpha)
\end{eqnarray*}
 for $\alpha,\beta \in L^o$.
 Such a $2$-cocycle $\epsilon$ indeed exists (cf. Remark 6.4.1, \cite{LL}).
Note that $\omega^{s\<\al,\be\>}=(-1)^{\<\al,\be\>}$ if $\<\al,\be\>\in \Z$.

Let $\C_{\epsilon}[L^o]$ be the $\epsilon$-twisted group algebra of $L^{o}$,
which by definition has a designated basis
$\{ e_{\alpha}\ |\ \alpha\in L^o\}$
with $e_\al \cdot e_\be=\epsilon\(\al,\be\)e_{\al+\be}$ for $\al,\be\in L^o$.
We make $\C_{\epsilon}[L^o]$ an $\wh \h$-module by letting $\wh \h'$
act trivially and letting $\h$ act by
\begin{eqnarray}
  he_\be=\<h,\be\>e_\be \   \  \mbox{ for }h\in \h,\  \be\in L^o.
  \end{eqnarray}
For $\al\in L$,  define a linear operator $z^{\al}:\   \C_{\epsilon}[L^o]\rightarrow \C_{\epsilon}[L^o][z,z^{-1}]$ by
\begin{eqnarray}
z^\al\cdot e_\be=z^{\<\al,\be\>}e_\be\   \   \   \mbox{ for }\be\in L^{o}.
\end{eqnarray}

Recall that the canonical irreducible module for the Heisenberg algebra $\wh \h'$ of level $1$
is isomorphic to $S(\wh \h^{-})$ as an $\wh \h^{-}$-module.
Then we view $S(\wh \h^{-})$ as an $\widehat \h$-module of level $1$ with $\h$ acting trivially.
Set
\begin{eqnarray}
V_{L^o}=S(\wh \h^{-})\otimes \C_{\epsilon}[L^o],
\end{eqnarray}
the tensor product of $\widehat \h$-modules. Then $V_{L^o}$ is an $\widehat \h$-module of level $1$.
More generally, for any subset $K$ of $L^o$, we define
$$V_K=\sum_{\al\in K}S(\wh \h^{-})\otimes \C e_{\al}\subset V_{L^o},$$
which  is an $\widehat \h$-submodule of $V_{L^o}$.

Set
$$\vac=e_0\in \C_{\epsilon}[L^o]\subset V_{L^o}.$$
Identify $\h$ and $\C_{\epsilon}[L^o]$ as subspaces of $V_{L^o}$ via $a\mapsto a(-1)\otimes 1$ ($a\in\h$)
and $e_\al\mapsto 1\otimes e_\al$ ($\al\in L^o$).
For $\alpha\in \h$, set
\begin{align}
\al(z)=\sum_{n\in\Z}\al(n)z^{-n-1}\in \E(V_{L^o}).
\end{align}
For any $a(x),b(x)\in  \E(V_{L^o})$, define normally ordered product
\begin{align}
\nord a(x)b(x)  \nord=a(x)^{+}b(x)+b(x)a(x)^{-},
\end{align}
where $a(x)^{+}=\sum_{n<0}a_nx^{-n-1}$ and $a(x)^{-}=\sum_{n\ge 0}a_nx^{-n-1}$.
Define a linear map
\begin{eqnarray}
Y(\cdot,z):\  V_L\rightarrow (\te{End} V_{L^o})[[z,z^{-1}]]
\end{eqnarray}
by
\begin{align*}
&Y(\al,z)=\al(z),\\
&Y(e_\al,z)=\exp\(\sum_{n>0}\frac{\al(-n)}{n}z^n\)\exp\(-\sum_{n>0}\frac{\al(n)}{n}z^{-n}\)e_\al z^\al,\\
&Y(\al^{(1)}_{-n_1-1}\cdots \al^{(r)}_{-n_r-1}e_\al,z)
=\nord \partial_z^{(n_1)}\al^{(1)}(z)\cdots \partial_z^{(n_r)}\al^{(r)}(z)Y(e_\al,z)\nord,
\end{align*}
where $r\ge 0$, $n_1,\dots,n_r\ge 0$ and $\al^{(1)},\dots,\al^{(r)},\al\in L$.
Then  $\(V_L,Y,\vac\)$ carries the structure of a vertex algebra (see \cite{bor}, \cite{FLM}).
Set
\begin{align}\label{conformal-vector}
\omega=\sum_{i=1}^{d}u^{(i)}(-1)u^{(i)}(-1){\bf 1}\in V_{L},
\end{align}
where $u^{(1)},\dots,u^{(d)}$ is an orthonormal basis of $\h$, and
furthermore, set
$$Y(\omega,x)=\sum_{n\in \Z}L(n)x^{-n-1}.$$
Then
\begin{align}
[L(m),L(n)]=(m-n)L(m+n)+\frac{d}{12}(m^3-m)\delta_{m+n,0}
\end{align}
for $m,n\in \Z$, $V_{L}$ is $\Z$-graded by the eigenvalues of $L(0)$, called the {\em conformal weights},
and
$$Y(L(-1)v,x)=\frac{d}{dx}Y(v,x)\   \   \   \mbox{ for } v\in V_{L}.$$
Especially, we have
\begin{align}
L(0)h=h,\   \    \  \  L(0)e_{\alpha}=\frac{1}{2}\<\al,\al\>e_{\al}
\end{align}
for $h\in \h,\ \al\in L$.
On the other hand,  $V_{L^o}$ is a $V_{L}$-module and
for any $\lambda\in L^o$, $V_{\lambda+L}$ is an irreducible $V_L$-module.

Furthermore, we have (see \cite{dong1}, \cite[Theorem 3.16]{DLM}):

\begin{prop}\label{prop:va-lattice-mod-regularity}
Let $L$ be a nondegenerate even lattice. Then every $V_L$-module is completely reducible
and any irreducible $V_L$-module is isomorphic to $V_{\lambda+L}$ for some $\lambda\in L^o$.
Furthermore, for each $\al\in L$, $\al(0)$ acts semi-simply  with only integer eigenvalues on every $V_L$-module.
\end{prop}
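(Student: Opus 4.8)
The plan is as follows. The first two assertions are, respectively, the classification of irreducible $V_L$-modules from \cite{dong1} and the complete reducibility of arbitrary $V_L$-modules from \cite[Theorem 3.16]{DLM}; I would recall the mechanism behind them only to the extent needed, and then derive the statement about $\al(0)$ as a consequence.

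For the classification, the key point is that a $V_L$-module $W$ is in particular a restricted level-one module for the Heisenberg algebra $\wh\h$ through the fields $\al(z)$, $\al\in\h$. Since $\al(0)$ lies in the center of $\wh\h$, standard Heisenberg theory gives a decomposition $W\cong S(\wh\h^{-})\otimes\Omega_W$, where $\Omega_W=\set{w\in W}{\al(n)w=0\text{ for all }\al\in\h,\ n>0}$ carries commuting actions of the operators $\al(0)$, $\al\in\h$, and of the operators extracted from the vertex operators $Y(e_\al,z)$, $\al\in L$. Using $[\al(0),Y(e_\be,z)]=\<\al,\be\>Y(e_\be,z)$ together with the conformal grading by $L(0)$, one would analyze $\Omega_W$ and show that an irreducible $V_L$-module must be isomorphic, as a $V_L$-module, to some $V_{\lambda+L}$, depending only on the coset $\lambda+L\in L^o/L$; this is \cite{dong1}. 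That in addition every $V_L$-module is a direct sum of such irreducibles is the regularity statement \cite[Theorem 3.16]{DLM}.

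Granting these two inputs, the last assertion is short. By complete reducibility it suffices to prove it on an irreducible module, which by the classification is $W\cong V_{\lambda+L}=\bigoplus_{\be\in\lambda+L}\bigl(S(\wh\h^{-})\otimes\C e_\be\bigr)$ for some $\lambda\in L^o$. For $\al\in L$, the operator $\al(0)$ commutes with $\wh\h^{-}$ and sends $e_\be\mapsto\<\al,\be\>e_\be$, hence acts as the scalar $\<\al,\be\>$ on the summand $S(\wh\h^{-})\otimes\C e_\be$. Writing $\be=\lambda+\gamma$ with $\gamma\in L$, and using $\al\in L$, $\lambda\in L^o$ together with the $\Z$-valuedness of $\<\cdot,\cdot\>$ on $L$, we get $\<\al,\be\>=\<\al,\lambda\>+\<\al,\gamma\>\in\Z$. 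Thus $\al(0)$ is diagonalizable with integer eigenvalues on $V_{\lambda+L}$, and therefore on every $V_L$-module.

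I expect the real obstacle not to lie in the reduction above but entirely inside the two quoted theorems — in particular in the $C_2$-cofiniteness of $V_L$ that underlies the regularity statement — which I would take as established in the literature rather than reprove.
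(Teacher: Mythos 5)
Your proposal matches the paper's treatment: the proposition is stated in the paper as a recollection of known results, with exactly the citations you give (\cite{dong1} for the classification, \cite[Theorem~3.16]{DLM} for complete reducibility), and no proof is supplied. Your derivation of the integrality of the $\al(0)$-eigenvalues from those two inputs is correct (indeed one can shorten it slightly by noting $\be\in\lambda+L\subset L^o$ and $\al\in L$ give $\<\al,\be\>\in\Z$ directly from the definition of $L^o$), so the proposal is sound and follows the same route.
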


Now, we recall the associative algebra $A(L)$ associated to $V_L$ (see \cite[Section 6.5]{LL}).
By definition, $A(L)$ is the associative algebra with unit $1$ generated by $\set{h[n],\  e_\al[n]}{h\in\h,\ \al\in L,\ n\in \Z}$,
subject to a set of relations written in terms of generating functions:
\begin{align*}
  h[z]=\sum_{n\in\Z}h[n]z^{-n-1},\,\,\,\,
  e_\al[z]=\sum_{n\in\Z}e_\al[n]z^{-n-1}.
\end{align*}
The relations are
\begin{align*}
  &\te{(AL1) }\  \   \   \   e_0[z]=1,\\
  &\te{(AL2) }\   \   \   \    \left[h[z],
    h'[w]\right]=\<h,h'\>\frac{\partial}{\partial w}z\inverse\delta\(\frac{w}{z}\),\\
  &\te{(AL3) }\    \   \  \    \left[h[z],e_\al[w]\right]=\<\al,h\>
    e_\al[w]z\inverse\delta\(\frac{w}{z}\),\\
  &\te{(AL4) }\    \   \   \   \left[e_\al[z],e_\be[w]\right]=0\   \te{ if }\<\al,\be\>\ge0,\\
  &\te{(AL5) }\   \   \   \  (z-w)^{-\<\al,\be\>}\left[e_\al[z],e_\be[w]\right]=0\   \te{ if }\<\al,\be\><0,
\end{align*}
for $h, h'\in \h,\  \al,\be\in L$.
An $A(L)$-module $W$ is said to be {\em restricted} if for every $w\in W$, we have
$ h[z]w, \  e_\al[z]w\in W((z))$ for all $h\in\h,\  \al\in L$.

The following result gives a connection
between $V_L$-modules and certain $A(L)$-modules (see \cite[Section 6.5]{LL}):

\begin{prop}\label{prop:VQ-mod-AQ-mod}
Let $(W,Y_W)$ be any $V_L$-module. Then $W$ is a restricted $A(L)$-module with
\begin{align*}
  h[z]=Y_W(h,z),\   \  \   \,\,e_\al[z]=Y_W(e_\al,z)
\end{align*}
for  $h\in\h,\   \al\in L$. Furthermore, the following relations hold on $W$ for $\al,\be\in L$:
\begin{align*}
&\te{(AL6) }\   \   \   \   \partial_z e_\al[z]=\nord \al[z] e_\al[z]\nord,\\
  &\te{(AL7) }\   \   \    \   \te{Res}_{x}\big(
    (x-z)^{-\<\al,\be\>-1} e_\al[x] e_\be[z]-(-z+x)^{-\<\al,\be\>-1} e_\be[z] e_\al[x]\big)\\
  &\hspace{2cm}=\varepsilon(\al,\be) e_{\al+\be}[z].
\end{align*}
On the other hand, let $W$ be a restricted $A(L)$-module
such that (AL6) and (AL7) hold.
Then $W$ admits a  $V_L$-module structure which is uniquely determined by
$$Y_W(h,z)= h[z],\   \   \    Y_W(e_\al,z)= e_\al[z]\   \  \   \mbox{ for }h\in\h,\   \al\in L,$$
where the module structure is explicitly given by
\begin{align*}
  Y_W(h^{(1)}_{-n_1-1}\cdots h^{(r)}_{-n_r-1}e_\al,z)
  =\nord \partial_z^{(n_1)} h^{(1)}[z]\cdots \partial_z^{(n_r)} h^{(r)}[z] e_\al[z]\nord
\end{align*}
for $r\ge 0$, $h^{(i)}\in\h$, $n_i\ge 0$ and $\al\in L$.
\end{prop}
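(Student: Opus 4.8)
The plan is to establish the two implications separately, the second being the substantial one.

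\emph{From $V_L$-modules to $A(L)$-modules.} Given a $V_L$-module $(W,Y_W)$, I would set $h[z]=Y_W(h,z)$ and $e_\al[z]=Y_W(e_\al,z)$; restrictedness is immediate from the truncation property $Y_W(v,z)w\in W((z))$. Relation (AL1) is $Y_W(e_0,z)=Y_W(\vac,z)=1_W$; (AL2) and (AL3) are instances of the module commutator formula applied to the $V_L$-products $h_1h'=\langle h,h'\rangle\vac$ (with $h_0h'=0$ and $h_nh'=0$ for $n\ge2$) and $h_0e_\al=\langle h,\al\rangle e_\al$ (with $h_ne_\al=0$ for $n\ge1$); (AL4) and (AL5) are the weak commutativity of $Y(e_\al,z_1)$ and $Y(e_\be,z_2)$ transferred to $W$, the pole order at $z_1=z_2$ being $\max(0,-\langle\al,\be\rangle)$; (AL6) is the $V_L$-identity $\partial_z Y(e_\al,z)=\nord\al(z)Y(e_\al,z)\nord$ transferred to $W$; and (AL7) is the $n=-\langle\al,\be\rangle-1$ case of the module iterate (associativity) formula for $u=e_\al$, $v=e_\be$, using $(e_\al)_{-\langle\al,\be\rangle-1}e_\be=\varepsilon(\al,\be)e_{\al+\be}$ and $(e_\al)_ne_\be=0$ for $n\ge-\langle\al,\be\rangle$ in $V_L$. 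Each of these is a routine consequence of the vertex algebra axioms and the explicit structure of $V_L$.

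\emph{From $A(L)$-modules to $V_L$-modules.} Let $W$ be a restricted $A(L)$-module satisfying (AL6) and (AL7). First I would check that the fields $h[z]$ ($h\in\h$) and $e_\al[z]$ ($\al\in L$), viewed in $\E(W)=\te{Hom}(W,W((z)))$, are pairwise mutually local: from (AL2), $(z-w)^2[h[z],h'[w]]=0$; from (AL3), $(z-w)[h[z],e_\al[w]]=0$; from (AL4)--(AL5), $(z-w)^{\max(0,-\langle\al,\be\rangle)}[e_\al[z],e_\be[w]]=0$. By the local-system construction for ordinary vertex algebras (cf.\ \cite{LL}; the genuine-locality, $\phi=F_a$ case of Theorem \ref{thm:abs-construct-non-h-adic}), the set $S=\{1_W\}\cup\{h[z]\}\cup\{e_\al[z]\}$ then generates a vertex algebra $V(S)\subseteq\E(W)$ of which $W$ is a module with $Y_W(a(z),x)=a(x)$. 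Next I would compute, using (AL2), (AL3), (AL6), (AL7) and weak commutativity, the relevant products of the generating fields inside $V(S)$ and verify that they match the corresponding products in $V_L$: $h[z]_{(1)}h'[z]=\langle h,h'\rangle1_W$, $h[z]_{(n)}h'[z]=0$ for $n=0$ or $n\ge2$; $h[z]_{(0)}e_\al[z]=\langle h,\al\rangle e_\al[z]$, $h[z]_{(n)}e_\al[z]=0$ for $n\ge1$; $\partial_z e_\al[z]=\nord\al[z]e_\al[z]\nord$; $e_\al[z]_{(-\langle\al,\be\rangle-1)}e_\be[z]=\varepsilon(\al,\be)e_{\al+\be}[z]$ and $e_\al[z]_{(n)}e_\be[z]=0$ for $n\ge-\langle\al,\be\rangle$. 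Since $V_L$ is generated as a vertex algebra by $\h\cup\{e_\al:\al\in L\}$ subject to precisely these relations, this data produces a vertex algebra homomorphism $\Phi\colon V_L\to V(S)$ with $\Phi(h)=h[z]$ and $\Phi(e_\al)=e_\al[z]$; pulling the $V(S)$-module structure of $W$ back along $\Phi$ endows $W$ with a $V_L$-module structure. Evaluating $\Phi$ on the spanning monomials $h^{(1)}_{-n_1-1}\cdots h^{(r)}_{-n_r-1}e_\al$ of $V_L$ yields the stated formula for $Y_W$, and uniqueness is clear since $\h\cup\{e_\al\}$ generates $V_L$.

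\emph{Main obstacle.} The delicate point is entirely in the second implication: justifying that matching the generator relations (AL1)--(AL7) on $W$ suffices to produce the homomorphism $\Phi\colon V_L\to V(S)$. This rests on $V_L$ being presented by the generating set $\h\cup\{e_\al\}$ with exactly these relations; concretely, one must argue that the products computed above force the image of $\Phi$ to be spanned by the normally ordered monomials $\nord\partial_z^{(n_1)}h^{(1)}[z]\cdots\partial_z^{(n_r)}h^{(r)}[z]e_\al[z]\nord$ and that, just as in $V_L$ itself, no further relations among the generators are required. This presentation statement for $V_L$ is the core content of \cite{LL}, which the present argument transports to an arbitrary such $W$; a secondary technicality is the well-definedness and bracketing-independence of the iterated normally ordered products appearing in the $Y_W$-formula, which again follows from mutual locality via the local-system machinery.
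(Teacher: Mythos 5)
The paper does not actually prove this proposition; it is recalled from \cite[Section 6.5]{LL} (the sentence preceding it says ``see \cite[Section 6.5]{LL}''). So there is no in-paper proof to compare against, and your proposal has to be judged on its own.

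Your outline is correct and is in fact the standard route. The forward direction is exactly as you say: (AL1) is the vacuum property, (AL2)--(AL3) are the module commutator formula applied to the $V_L$-products $h_1h'=\<h,h'\>\vac$, $h_0e_\al=\<h,\al\>e_\al$, (AL4)--(AL5) are weak commutativity with the pole order $\max(0,-\<\al,\be\>)$ coming from $(e_\al)_ne_\be=0$ for $n\ge -\<\al,\be\>$, (AL6) is the translation-covariance identity $\der(e_\al)=\al_{-1}e_\al$ transported to $W$, and (AL7) is the residue form of associativity at $n=-\<\al,\be\>-1$ using $(e_\al)_{-\<\al,\be\>-1}e_\be=\epsilon(\al,\be)e_{\al+\be}$. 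For the converse your three steps (mutual locality, local-system vertex algebra $V(S)\subseteq\E(W)$, homomorphism $V_L\to V(S)$ from matching generator products) are the right ones.

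The gap you flag is real but is precisely the cited content of \cite[Proposition 6.5.17]{LL}, and it is worth being explicit about its shape. Rather than a vertex-algebra-theoretic ``generators and relations'' presentation, what \cite{LL} actually proves and what the paper itself uses one proposition later (in the proof of Proposition \ref{prop:AQ-mod-hom-VA-hom}) is an $A(L)$-module universal property: $V_L$ is, up to isomorphism, the restricted $A(L)$-module cyclically generated by a vector $\vac$ with $h[n]\vac=0$ for $n\ge0$ (together with the corresponding condition on $e_\al[n]\vac$). Once you have built $V(S)\subseteq\E(W)$, the identity field $1_W$ is such a cyclic vector, so $\vac\mapsto 1_W$ extends to an $A(L)$-module map $V_L\to V(S)$; one then checks this is a vertex algebra homomorphism using (AL6), (AL7), and the fact that $\h\oplus\C_\epsilon[L]$ generates $V_L$. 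That is a cleaner and more verifiable formulation than ``$V_L$ is presented by exactly these relations,'' which as stated would require an independent argument that no extra relations hold; the $A(L)$-module route gives you well-definedness for free. Your secondary worry about bracketing of iterated normally-ordered products is handled exactly as you say, by the local-system machinery. In short: correct approach, correctly located obstacle, and the missing ingredient is \cite[Proposition 6.5.17]{LL}, best phrased as an $A(L)$-module universal property of $V_L$ rather than a vertex-algebra presentation.
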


On the other hand, we have the following universal property of $V_{L}$:

\begin{prop}\label{prop:AQ-mod-hom-VA-hom}
Let $V$ be a nonlocal vertex algebra
and let $\psi:\h\oplus\C_{\varepsilon}[L]\rightarrow V$ be a linear map such that
$\psi(e_0)={\bf 1}$, the relations (AL1-3) and (AL6) hold with
\begin{align*}
  h[z]= Y(\psi(h),z),\quad e_\al[z]= Y(\psi(e_\al),z)\quad
  \te{for }h\in\h,\  \al\in L,
\end{align*}
and such that the following relation holds for $\al,\be\in L$:
\begin{align}\label{A(L)457}
&(x-z)^{-\<\al,\be\>-1}e_{\alpha}[x]e_{\be}[z]
-(-z+x)^{-\<\al,\be\>-1}e_{\be}[z]e_{\alpha}[x]\nonumber\\
&\quad \quad \quad =  \varepsilon(\al,\be)e_{\al+\be}[z]x^{-1}\delta\left(\frac{z}{x}\right).
\end{align}
Then $\psi$ can be  extended uniquely to a nonlocal vertex algebra homomorphism from $V_L$ to $V$.
\end{prop}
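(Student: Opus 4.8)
The plan is to realize $V_L$ as a nonlocal vertex algebra generated by the image of $\h\oplus\C_{\varepsilon}[L]$ under $Y$, together with the usual Borcherds-type generation/uniqueness machinery, and then to feed the hypotheses (AL1--3), (AL6), and \eqref{A(L)457} into a suitable ``generating fields extend uniquely'' principle. Concretely, let $S=\{h,\ e_\al\mid h\in\h,\ \al\in L\}\subseteq V_L$. The key structural fact (standard for lattice vertex algebras, and made explicit in \cite[Section 6.4--6.5]{LL}) is that $S$ generates $V_L$ as a nonlocal vertex algebra, with an explicit spanning set given by the normally ordered monomials $\partial_z^{(n_1)}h^{(1)}(z)\cdots\partial_z^{(n_r)}h^{(r)}(z)Y(e_\al,z)$; moreover the fields $Y(h,x)$ and $Y(e_\al,x)$ on $V_L$ satisfy exactly the relations (AL1--3), (AL6), and the Jacobi-type identity \eqref{A(L)457} (the latter is just relations (AL4--5)/(AL7) repackaged in delta-function form, valid because $\langle\al,\be\rangle\in\Z$). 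The universal property then asserts that any other nonlocal vertex algebra carrying fields satisfying these same relations receives a unique homomorphism from $V_L$.

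First I would set $a_h(x)=Y(\psi(h),x)$ and $a_\al(x)=Y(\psi(e_\al),x)$ in $\E(V)$ and check, using (AL3) and \eqref{A(L)457}, that the set $\{a_h(x),a_\al(x)\}$ is quasi-compatible (in fact compatible in the ordinary sense, since the obstructing factors are powers of $(x_1-x_2)$). Invoking the $\phi=\phi_{-1}$ case of Theorem \ref{thm:abs-construct-non-h-adic} — equivalently, Li's original construction theorem for ordinary modules — the subspace $\langle a_h(x),a_\al(x)\rangle\subseteq\E(V)$ is a nonlocal vertex algebra $U$ and $V$ is a $U$-module (indeed a faithful one, via $v\mapsto Y(v,x)$ followed by evaluation). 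Next I would use Proposition \ref{prop:AQ-mod-hom-VA-hom}'s hypotheses together with the characterization in Proposition \ref{prop:VQ-mod-AQ-mod}: the relations (AL1--3), (AL6), (AL7) — the last being exactly the residue form of \eqref{A(L)457} — say precisely that the field collection $\{a_h,a_\al\}$ defines on $U$ (hence on $V$ through the module map) the structure one needs to build a $V_L$-homomorphism. So I would construct the map $\Psi:V_L\to V$ on the spanning monomials by
\[
\Psi\bigl(h^{(1)}_{-n_1-1}\cdots h^{(r)}_{-n_r-1}e_\al\bigr)
=\Bigl(\nord\partial_z^{(n_1)}a_{h^{(1)}}(z)\cdots\partial_z^{(n_r)}a_{h^{(r)}}(z)\,a_\al(z)\nord\Bigr)\Big|_{\text{const.\ term}},
\]
mimicking the formula in Proposition \ref{prop:VQ-mod-AQ-mod}, and then verify it is well defined and a homomorphism. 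Well-definedness and the intertwining property $\Psi(Y(u,x)v)=Y(\Psi(u),x)\Psi(v)$ follow from the fact that all relations among the monomials in $V_L$ are consequences of (AL1--7), which hold in $V$ by hypothesis; uniqueness is immediate since $V_L$ is generated by $S$ and $\Psi$ is prescribed on $S$.

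The main obstacle is the bookkeeping needed to show that the relations (AL1--3), (AL6), and \eqref{A(L)457} in $V$ are \emph{sufficient} to force all the relations holding among the spanning monomials of $V_L$ — i.e., that (AL1--7) is a complete set of defining relations for the nonlocal vertex algebra $V_L$ relative to the generators $S$. The cleanest route is not to reprove this from scratch but to reduce to the already-established equivalence in Proposition \ref{prop:VQ-mod-AQ-mod}: a nonlocal vertex algebra homomorphism $V_L\to V$ is the same as a $V_L$-module structure on $V$ (via left multiplication, i.e. the adjoint module) that is compatible with the vacuum and with $\psi$; and Proposition \ref{prop:VQ-mod-AQ-mod} converts the data of fields on $V$ satisfying (AL1--3), (AL6), (AL7) into exactly such a structure, \emph{provided} $V$ is restricted as an $A(L)$-module, which holds because $Y(\psi(h),x)v,\ Y(\psi(e_\al),x)v\in V((x))$ for every $v\in V$ by the nonlocal vertex algebra axioms. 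Thus the proof amounts to: (i) observe $V$ becomes a restricted $A(L)$-module on which (AL6)--(AL7) hold; (ii) apply the second half of Proposition \ref{prop:VQ-mod-AQ-mod} to endow $V$ — but here one must be slightly careful, since that proposition produces a $V_L$-\emph{module} structure on $V$, whereas we want a \emph{homomorphism}; the bridge is that the $V_L$-module map $Y_V:V_L\to(\operatorname{End}V)[[x,x^{-1}]]$ so produced, evaluated via $u\mapsto \lim_{x\to0}Y_V(u,x)\mathbf 1_V$-type considerations and the skew-symmetry/creation axioms, yields an actual algebra homomorphism because it sends $\mathbf 1\mapsto\mathbf 1$ and is multiplicative on the generating fields. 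I would write this last identification out carefully, as it is the one genuinely non-formal point; everything else is an application of Theorem \ref{thm:abs-construct-non-h-adic} and Propositions \ref{prop:VQ-mod-AQ-mod} in sequence.
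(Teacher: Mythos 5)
Your proposal orbits the right ideas but never lands on the tool that does the actual work. The paper's argument stays entirely at the level of $A(L)$-modules and invokes the cyclicity/universal property \cite[Proposition 6.5.17]{LL}: once one checks that $V$ is a restricted $A(L)$-module on which (AL6)--(AL7) hold (the latter coming from \eqref{A(L)457}) and that $h[n]\vac_V=0$ for $n\ge 0$, that proposition immediately produces an $A(L)$-module isomorphism $\psi\colon V_L\to A(L)\vac_V\subseteq V$ with $\vac\mapsto\vac_V$. The homomorphism property is then a one-line check on generators, since for $u\in\h\oplus\C_{\epsilon}[L]$ one has $\psi(Y(u,x)v)=\psi(u[x]v)=u[x]\psi(v)=Y(\psi(u),x)\psi(v)$, using only the $A(L)$-module-map property and the defining fields on $V$; generation by $\h+\C_{\epsilon}[L]$ finishes it. You never identify this universal property, and without it there is nothing to construct the map.

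Your Plan B replaces that step with the two-stage maneuver: first give $V$ a $V_L$-module structure via Proposition \ref{prop:VQ-mod-AQ-mod}, then try to extract a homomorphism $V_L\to V$ by taking vacuum limits $u\mapsto\lim_{x\to 0}Y_V(u,x)\vac_V$. You flag this as ``the one genuinely non-formal point,'' and indeed it is the crux: (a) you need $Y_V(u,x)\vac_V\in V[[x]]$ for \emph{all} $u\in V_L$, which is not a feature of a module structure but exactly the vacuum condition that has to be propagated from generators by an inductive argument; and (b) even granted (a), you must show the resulting linear map satisfies $Y(\Psi(u),x)=Y_V(u,x)$ for all $u$, not just generators, which is essentially a reconstruction theorem for the target $V$ and has to be proved, not asserted. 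Both of these are precisely what \cite[Proposition 6.5.17]{LL} packages up. So the detour through a $V_L$-module structure is both unnecessary and creates the gap you acknowledge but do not close; the paper's route is direct and avoids it. Your Plan A (build $\langle a_h,a_\al\rangle\subseteq\E(V)$ via Theorem \ref{thm:abs-construct-non-h-adic} and compare to $V_L$) you yourself abandon, and it runs into a parallel problem: the evaluation map $\E(V)\to V$, $a(x)\mapsto a(x)\vac_V|_{x=0}$, is not a nonlocal vertex algebra homomorphism in general, so it would not transport the comparison.
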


\begin{proof}
From Proposition \ref{prop:VQ-mod-AQ-mod}, $V_L$ is a restricted $A(L)$-module with
$h[z]= Y(h,z)$ and $e_\al[z]= Y(e_\al,z)$ for $h\in\h,\  \al\in L$.
On the other hand, we deduce from relation \eqref{A(L)457} that (AL4), (AL5) and (AL7) holds on $V$.
Then $V$ is an $A(L)$-module such that (AL6) and (AL7) hold.
For $h\in\h$, we have
\begin{align*}
  h[z] \vac_V=Y_V(\psi^0(h),z)\vac_V\in V[[z]],
\end{align*}
which implies $h[n]\vac_V=0$ for $n\ge 0$.
It follows from \cite[Proposition 6.5.17]{LL} that
$\vac\mapsto \vac_V$ induces an $A(L)$-module isomorphism $\psi$ from $V_L$ to $A(L)\vac_V$.
Notice that
\begin{align*}
  &\psi(h)=\psi(h[-1]\vac)=h[-1]\psi(\vac)
  =\psi^0(h)_{-1}{\bf 1}_{V}=\psi^0(h),\\
  &\psi(e_\al)=\psi(e_\al[-1]\vac)=e_\al[-1]\psi(\vac)
  =\psi^0(e_\al)_{-1}\vac_V=\psi^0(e_\al)
\end{align*}
for $h\in\h,\  \al\in L$. Thus $\psi$ extends $\psi^0$ and we have
\begin{align*}
  &\psi(Y(u,x)v)=\psi(u[x]v)=Y_V(\psi(u),x)\psi(v)\   \   \   \mbox{ for }u\in\h\oplus\C_{\epsilon}[L],\  v\in V_L.
\end{align*}
As $\h+\C_{\epsilon}[L]$ generates $V_L$ as a vertex algebra, it follows that $\psi$
is a nonlocal vertex algebra homomorphism.
Therefore, $\psi$ is an injective  nonlocal vertex algebra homomorphism,
which extends $\psi^0$. The uniqueness is clear.
\end{proof}

\subsection{$\phi$-coordinated quasi-modules for $V_L$}

In this subsection, we first generalize the twisted vertex operators constructed in \cite{Lep} and
then construct $\phi$-coordinated quasi-modules for $V_L$.

Let $\mu$ be an isometry of the lattice $L$ with a period $N$, which are fixed throughout this subsection.
As in \cite{Lep}, assume
\begin{align}\label{sum-even}
  \sum_{k\in\Z_N}\<\mu^k(\al),\al\>\in 2\Z\quad\te{for }\al\in L.
\end{align}
Recall $\h=\C\otimes_{\Z}L$. We then view $\mu$ naturally
as a linear automorphism of $\h$, so that $\<\mu(h),\mu(h')\>=\<h,h'\>$ for $h,h'\in\h$.

Notice that for $\al,\be\in L$, we have
\begin{align*}
  \epsilon(\mu(\al),\mu(\be))\epsilon(\mu(\be),\mu(\al))\inverse
  =(-1)^{\<\mu(\al),\mu(\be)\>}=(-1)^{\<\al,\be\>}
  =\epsilon(\al,\be)\epsilon(\be,\al)\inverse.
\end{align*}
From \cite[Proposition 5.4.1]{FLM}, $\mu$ can be lifted to an automorphism of
$\C_{\epsilon}[L]$ of period $N$ such that
$\mu(e_\al)\in\<\pm 1\>e_{\mu(\al)}$ for $\al\in L$.
From \cite[Section 5]{Lep}, we can assume that
\begin{align}
  \mu(e_\al)=e_\al\quad\te{if }\mu(\al)=\al.
\end{align}
The following is straightforward (cf. Proposition \ref{prop:AQ-mod-hom-VA-hom}):

\begin{lem}\label{lem:mu-auto-VL}
There exists an automorphism $\wh{\mu}$ of $V_L$ which is uniquely determined by
\begin{align}
 \wh{\mu}(h)=\mu(h),\quad \quad
  \wh{\mu}(e_\al)=\mu(e_\al)\quad\te{for }h\in\h,\   \al\in L.
\end{align}
\end{lem}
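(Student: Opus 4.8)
The plan is to invoke the universal property of $V_L$ established in Proposition \ref{prop:AQ-mod-hom-VA-hom}. I would set $V = V_L$ and define a linear map $\psi^0 : \h \oplus \C_\epsilon[L] \to V_L$ by $\psi^0(h) = \mu(h)$ for $h \in \h$ and $\psi^0(e_\al) = \mu(e_\al)$ for $\al \in L$, where on the right-hand side $\mu$ denotes the induced automorphism of $\h$ and the lifted automorphism of $\C_\epsilon[L]$ fixed in the discussion preceding the lemma. Clearly $\psi^0(e_0) = \mu(e_0) = e_0 = {\bf 1}$. The task is then to verify that the hypotheses (AL1--3), (AL6), and relation \eqref{A(L)457} hold for the operators
\begin{align*}
  h[z] = Y_{V_L}(\mu(h),z), \qquad e_\al[z] = Y_{V_L}(\mu(e_\al),z).
\end{align*}

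The key point is that each of these relations, as listed in Proposition \ref{prop:VQ-mod-AQ-mod} and Proposition \ref{prop:AQ-mod-hom-VA-hom}, already holds for the operators $Y_{V_L}(h,z)$, $Y_{V_L}(e_\al,z)$ (since $V_L$ is itself a $V_L$-module), and they are preserved under applying the isometry $\mu$. Concretely: $\<\mu(h),\mu(h')\> = \<h,h'\>$ since $\mu$ is an isometry, so (AL2) transforms correctly; $\<\mu(\al),\mu(h)\> = \<\al,h\>$ gives (AL3); $\mu(e_0) = e_0$ gives (AL1); $\mu$ preserves normally ordered products (as it is an automorphism of $\C_\epsilon[L]$ compatible with the Heisenberg action), giving (AL6); and for \eqref{A(L)457} one uses $\<\mu(\al),\mu(\be)\> = \<\al,\be\>$ together with the fact that the cocycle satisfies $\varepsilon(\mu(\al),\mu(\be)) = \varepsilon(\al,\be)$ — this last identity must be checked, and it follows from the normalization chosen for the lift of $\mu$ to $\C_\epsilon[L]$, as in \cite[Section 5]{Lep} (possibly up to a sign that is absorbed into the $\<\pm 1\>$ ambiguity but is consistent because of \eqref{sum-even}). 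Once all hypotheses are verified, Proposition \ref{prop:AQ-mod-hom-VA-hom} produces a unique nonlocal vertex algebra homomorphism $\wh\mu : V_L \to V_L$ extending $\psi^0$.

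To conclude that $\wh\mu$ is an \emph{automorphism} of $V_L$, I would observe that $\mu$ has period $N$ both as an isometry of $L$ and (by construction, cf. \cite[Proposition 5.4.1]{FLM}) as an automorphism of $\C_\epsilon[L]$. Hence $\mu^{-1}$ (equivalently $\mu^{N-1}$) also satisfies the same hypotheses, yielding a nonlocal vertex algebra homomorphism $\wh{\mu^{-1}}$; by the uniqueness clause in Proposition \ref{prop:AQ-mod-hom-VA-hom}, the composites $\wh\mu \circ \wh{\mu^{-1}}$ and $\wh{\mu^{-1}} \circ \wh\mu$ both agree with the identity on the generating set $\h \oplus \C_\epsilon[L]$ and hence equal $\mathrm{id}_{V_L}$. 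Uniqueness of $\wh\mu$ subject to the stated values on $\h$ and $\C_\epsilon[L]$ is immediate from the fact that $\h + \C_\epsilon[L]$ generates $V_L$ as a vertex algebra, again via the uniqueness in Proposition \ref{prop:AQ-mod-hom-VA-hom}.

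The main obstacle I anticipate is the careful bookkeeping for \eqref{A(L)457}, specifically confirming $\varepsilon(\mu(\al),\mu(\be)) = \varepsilon(\al,\be)$ (or tracking the precise sign if it is not exactly equality) so that the $e_{\al+\be}$-term on the right transforms to $e_{\mu(\al)+\mu(\be)} = e_{\mu(\al+\be)}$ with the correct scalar. This is exactly the subtlety that forced the normalization conditions imposed before the lemma — the hypothesis \eqref{sum-even} and the convention $\mu(e_\al) = e_\al$ when $\mu(\al) = \al$ — and it is handled in \cite{Lep}; I would cite that work rather than redo the cocycle computation. Everything else is routine transport of identities under an isometry.
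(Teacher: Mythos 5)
Your proposal is correct and takes exactly the approach the paper intends (the paper states the lemma as ``straightforward'' with a pointer to Proposition \ref{prop:AQ-mod-hom-VA-hom}): define $\psi^0 = \mu$ on $\h \oplus \C_\epsilon[L]$, verify the hypotheses of the universal property, and obtain invertibility by applying the same argument to $\mu^{-1}$ and invoking uniqueness. One clarification on the cocycle bookkeeping: what \eqref{A(L)457} actually requires is the multiplicativity $\mu(e_\al)\,\mu(e_\be) = \epsilon(\al,\be)\,\mu(e_{\al+\be})$, which holds automatically because $\mu$ was already lifted to an \emph{algebra} automorphism of $\C_\epsilon[L]$ (the signs $\eta(\al) \in \{\pm 1\}$ with $\mu(e_\al) = \eta(\al) e_{\mu(\al)}$ absorb the discrepancy between $\epsilon(\mu\al,\mu\be)$ and $\epsilon(\al,\be)$) --- this is a cleaner justification than the literal identity $\epsilon(\mu\al,\mu\be)=\epsilon(\al,\be)$, which need not hold, though you correctly flagged the sign ambiguity as the source of the subtlety.
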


Identify $\Z_{N}$ with $\<\wh{\mu}\>$ as an automorphism group of the vertex algebra $V_L$.
Set
\begin{align}
\xi=\exp(2\pi i/N),
\end{align}
the principal primitive $N$-th root of unity.

\begin{de}\label{def-epsilon-mu}
Define a map  $ \epsilon_\mu(\cdot,\cdot):\   \  L\times L\longrightarrow \C^\times$ by
\begin{align}\label{eq:2-cocycle-mu}
\epsilon_\mu (\al,\be)= \epsilon(\al,\be) \prod_{0\ne k\in\Z_N}(1-\xi^k)^{-\<\al,\mu^k(\be)\>}.
\end{align}
\end{de}

It is straightforward to show that $\epsilon_\mu$ is a 2-cocycle with
$\epsilon_\mu (\al,0)=1=\epsilon_\mu (0,\al)$.
Furthermore,   the commutator map of $\epsilon_{\mu}$ is given by
\begin{align}
c_{\mu}(\alpha,\beta)
=(-1)^{\<\alpha,\beta\>}\prod_{0\ne k\in \Z_N}(1-\xi^{k})^{\<\beta,\mu^k\alpha\>-\<\alpha,\mu^k\beta\>}
=\prod_{k\in\Z_N}(-\xi^{k})^{\<\mu^k\al,\be\>}=\prod_{k\in\Z_N}(-\xi^{k})^{-\<\al,\mu^k\be\>}.
\end{align}

Associated to the 2-cocycle $\epsilon_{\mu}$, we have the twisted group algebra $\C_{\epsilon_\mu}[L]$
which has a basis $\set{e_\al^\mu}{\al\in L}$ with
\begin{align*}
  e_\al^\mu\cdot e_\be^\mu=\epsilon_{\mu}(\al,\be)e_{\al+\be}^\mu\quad\mbox{ for }\al,\be\in L.
\end{align*}
Notice that $c_{\mu}(\mu(\al),\mu(\be))=c_{\mu}(\al,\be)$ for $\al,\be\in L$.
Then $\mu$ can also be lifted to an automorphism $\wh\mu$ on $\C_{\epsilon_\mu}[L]$
(see \cite[Section 5]{Lep}) such that for $\al\in L$,
\begin{align}
\wh\mu(e_\al^\mu)\in\<\xi'\>e_{\mu(\al)}^\mu\   \mbox{ and }   \  \wh\mu(e_\al^\mu)=e_\al^\mu\  \   \te{if }\mu(\al)=\al.
\end{align}

For the rest of this subsection, we assume
\begin{align}\label{p(x)-r}
p(x)=x^{r+1}p_{0}(x^N),
\end{align}
where $r\in \Z$ and $p_0(x)\in \C[x]$ with $p_0(0)\ne 0$.
Set $\phi(x,z)=\exp(zp(x)\partial_x)x$.


Recall (cf. \cite[Section 6]{LL}) that $V_{L}$ is a conformal vertex algebra, which is $\Z$-graded by
the conformal weight. Especially, we have
\begin{align}
  &z^{L(0)}Y(u,x)z^{-L(0)}=Y(z^{L(0)}u,zx),\\
  &L(0)h=h,\quad L(0)e_\al=\half\<\al,\al\>e_\al
\end{align}
for $u\in V_L$, $h\in\h,$ $\al\in L$. On the other hand, $\wh\mu$ preserves the conformal vector,
so that $[L(0),\wh\mu]=0$.
Set
\begin{align}
  L_\phi=\xi^{-rL(0)},
  \end{align}
 recalling that $r$ is the integer in (\ref{p(x)-r}).  Then we have
\begin{align}
  L_\phi Y(u,x)v=Y\left(L_\phi u, \xi^{-r}x\right)L_\phi v\   \    \   \mbox{ for }u,v\in V_{L}.
\end{align}

We now fix a  linear character
\begin{align}
\chi_{\phi}:\   \Z_N\rightarrow \C^\times  \  \mbox{ with }\chi_{\phi}([k])= \xi^{k}.
\end{align}
Set $\chi=\chi_{\phi}^{-r}$, another linear character of $\Z_{N}$. More specifically,
we have $\chi([k])=\xi^{-rk}$ for $[k]\in \Z_{N}$.
Furthermore,  we define a  group homomorphism
$$R:\   \Z_N\rightarrow \te{GL}(V_L)$$
 by
\begin{align}
R(k)=\wh\mu^{k}\chi(k)^{L(0)}=\wh\mu^{k}\xi^{-krL(0)}.
\end{align}
Then $(V_L,R)$ is a $(\Z_N,\chi)$-module vertex algebra.

In the following, we are going to construct $(\Z_N,\chi_{\phi})$-equivariant $\phi$-coordinated quasi $V_L$-modules.
For $h\in\h,\  n\in\Z$, set
\begin{align}
  h_{(n)}=\sum_{k=1}^{N}\mu^k(h)\xi^{-nk}\in \h.
\end{align}
We have
\begin{align}
\mu h_{(n)}=\xi^{n}h_{(n)}\   \mbox{ and }\  h=\frac{1}{N}(h_{(1)}+\cdots +h_{(N)}).
\end{align}
Recall that for $n\in \Z$, we have $\sum_{k=1}^{N}\xi^{nk}=0$ if $n\notin N\Z$.
Set
\begin{align}
\h\uz=\{h_{(0)}\ |\ h\in \h\},\    \   \   \   L_{(0)}=\{ \alpha_{(0)}\ |\  \alpha\in L\},
\end{align}
where $\h\uz$ is a subspace of $\h$ and $L\uz$ is a subgroup of $L$.

\begin{de}\label{def-T}
Let $\mathcal T$ denote the category of $L\uz$-graded $\C_{\epsilon_\mu}[L]$-modules
$T=\oplus_{\gamma\in L\uz}T_\gamma$ such that
\begin{align}\label{def-T-modules}
  e_\al^{\mu} w\in T_{(\al+\be)\uz},\quad\wh\mu(e_\al^{\mu})w=
  \xi^{-\<\al\uz,\be\>-\<\al\uz,\al\>/2}e_\al^{\mu} w
\end{align}
for $\al,\be\in L,\ w\in T_{\be\uz}$.
\end{de}

Note that all irreducible objects in category $\mathcal T$ were classified and constructed explicitly
in \cite[Section 6]{Lep}.

\begin{de}\label{def-tildemu}
We denote also by $\mu$  the automorphism of Lie algebra $\wh\h$ defined by
\begin{align}\label{mu-auto-wh-h}
\mu ({\bf k})={\bf k},\     \  \mu(h(n))=\xi^{-n}\mu(h)(n)\   \   \mbox{ for }h\in \h,\ n\in \Z.
\end{align}
\end{de}

Denote  by $\wh\h^\mu$ the fixed-point subalgebra of $\wh\h$ under
$\mu$.  For $h\in \h,\ n\in \Z$,  set
\begin{align}
h^\mu(n)=h_{(n)}(n)=\sum_{k\in \Z_{N}}(\mu^kh)(n)\xi^{-nk}\in \wh\h.
\end{align}
Then
\begin{align}
\wh\h^\mu=\te{span}\set{h^{\mu}(n)}{h\in\h,\  n\in\Z}\oplus \C{\bf k},
\end{align}
where
\begin{align}
[a^{\mu}(m),b^{\mu}(n)]=m\delta_{m+n,0}\<a_{(m)},b\>(N{\bf k})
\end{align}
for $a,b\in \h,\ m,n\in \Z$.

Set
\begin{align}
(\wh\h^\mu)'=\te{span}\set{h_{\mu}(n)}{h\in\h,\  n\in\Z,\  n\ne 0}\oplus \C{\bf k},
\end{align}
which is a Heisenberg algebra. Identify $\h\uz$ as a subspace of $\wh\h^\mu$ in the obvious way.
Then $\wh\h_\mu=(\wh\h^\mu)'\oplus \h\uz$, a direct sum of Lie algebras.
Set
$$\wh\h^\mu_-={\rm span} \{ h^\mu(-n)\ |\ h\in \h,\ n\ge 1\}, $$
an abelian subalgebra of $(\wh\h^\mu)'$.
It was well known that there is an irreducible $(\wh\h^\mu)'$-module structure of any nonzero level on
$S(\wh\h^\mu_-)$. We here view $S(\wh\h^\mu_-)$
as an $\wh\h^\mu$-module of {\em level $1/N$} with $\h\uz$ acting trivially.

Let $T$ be an $L\uz$-graded $\C_{\epsilon_\mu}[L]$-module from the category $\mathcal T$.
We make $T$ an $\wh\h^\mu$-module by letting $(\wh\h^\mu)'$ act trivially
and letting $\h_{(0)}$ act by
\begin{align*}
  h_{(0)}w=\<h\uz,\be\>w\quad\te{for }h\in \h,\  w\in T_{(\be\uz)},\,\be\in L.
\end{align*}
Set
\begin{align}
  V_T=S(\wh\h^\mu_-)\ot T,
\end{align}
the tensor product of $\wh\h^\mu$-modules, which is  of level $1/N$.

For $\al\in L$,  set
\begin{align}
E^\mu_\pm(\al,x)=\exp\(\sum_{n\in \Z_{\pm}}\al^\mu(n)\frac{x^{-n}}{n}\),
\end{align}
where $\Z_{\pm}$ denote the sets of positive (negative) integers.
The following results can be found in \cite{Lep}:

\begin{lem}\label{Lep-Epm}
The following relations hold on $V_{T}$ for $\al,\beta\in L$:
\begin{align}
E^\mu_\pm(\mu\al,x)=E^\mu_\pm(\al,\xi^{-1}x),
\end{align}
\begin{align}\label{Epm-comm}
E^\mu_{+}(\al,x_1)E^\mu_{-}(\beta,x_2)
=\left(\prod_{k\in \Z_{N}}\left(1-\frac{\xi^{-k}x_2}{x_1}\right)^{\<\mu^k\alpha,\beta\>}\right)
E^\mu_{-}(\beta,x_2)E^\mu_{+}(\al,x_1).
\end{align}
\end{lem}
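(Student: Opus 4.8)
The plan is to establish both identities by direct formal-variable computations, following \cite{Lep}.

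For the first identity, I would start from the definition $\al^\mu(n)=\sum_{k\in\Z_N}(\mu^k\al)(n)\xi^{-nk}$ and compute $(\mu\al)^\mu(n)$: replacing $\al$ by $\mu\al$ and reindexing the (finite) sum over $\Z_N$ via $k\mapsto k-1$ gives $(\mu\al)^\mu(n)=\xi^n\al^\mu(n)$. Substituting this into $E^\mu_\pm(\mu\al,x)=\exp\big(\sum_{n\in\Z_\pm}(\mu\al)^\mu(n)x^{-n}/n\big)$ and using $\xi^n x^{-n}=(\xi^{-1}x)^{-n}$ then yields $E^\mu_\pm(\mu\al,x)=E^\mu_\pm(\al,\xi^{-1}x)$. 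This part is immediate.

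For \eqref{Epm-comm}, I would set $A=\sum_{n>0}\al^\mu(n)x_1^{-n}/n$ and $B=\sum_{n<0}\be^\mu(n)x_2^{-n}/n$, so that $E^\mu_+(\al,x_1)=e^A$, $E^\mu_-(\be,x_2)=e^B$, and then compute $[A,B]$ using $[a^\mu(m),b^\mu(n)]=m\delta_{m+n,0}\<a_{(m)},b\>(N{\bf k})$, noting that on $V_T$ the central element ${\bf k}$ acts as $1/N$, so that $N{\bf k}$ acts as the scalar $1$. Only the terms with matching indices survive, giving
\begin{align*}
[A,B]=-\sum_{n>0}\frac1n\Big(\frac{x_2}{x_1}\Big)^n\<\al_{(n)},\be\>
=-\sum_{k\in\Z_N}\<\mu^k\al,\be\>\sum_{n>0}\frac1n\Big(\frac{\xi^{-k}x_2}{x_1}\Big)^n
=\sum_{k\in\Z_N}\<\mu^k\al,\be\>\log\Big(1-\frac{\xi^{-k}x_2}{x_1}\Big),
\end{align*}
where I have used $\<\al_{(n)},\be\>=\sum_{k\in\Z_N}\<\mu^k\al,\be\>\xi^{-nk}$ and the formal expansion $\sum_{n>0}t^n/n=-\log(1-t)$, both legitimate in $\C((x_1))((x_2))$. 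Since $[A,B]$ is a scalar (hence central), the Baker--Campbell--Hausdorff identity gives $e^Ae^B=e^{[A,B]}e^Be^A$, and exponentiating the displayed expression for $[A,B]$ produces precisely the factor $\prod_{k\in\Z_N}(1-\xi^{-k}x_2/x_1)^{\<\mu^k\al,\be\>}$ appearing in \eqref{Epm-comm}.

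The entire argument is routine; the only places that demand a little care are the bookkeeping of the level (making sure the factor $N{\bf k}$ collapses to the scalar $1$ on $V_T$, which is exactly where the normalization ``level $1/N$'' is used) and fixing the ambient space $\C((x_1))((x_2))$ of formal series in which the logarithmic expansions and the infinite product are simultaneously meaningful. I do not anticipate any genuine obstacle here, as this lemma essentially records standard twisted-vertex-operator facts.
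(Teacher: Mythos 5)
Your proof is correct. The paper does not prove Lemma \ref{Lep-Epm} itself --- it simply cites \cite{Lep} --- and your argument is exactly the standard computation from that reference: the reindexing $(\mu\al)^\mu(n)=\xi^n\al^\mu(n)$ gives the first identity, and for the second you correctly compute the central commutator $[A,B]$ from the Heisenberg relation, use that $N\mathbf{k}$ acts as the scalar $1$ on the level-$1/N$ module $V_T$, recognize the logarithmic series, and exponentiate via $e^Ae^B=e^{[A,B]}e^Be^A$; the resulting formal expansions all live in $\C[[x_2/x_1]]\subset\C((x_1))((x_2))$, so the formal-variable bookkeeping is sound.
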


\begin{de}\label{dYT0}
Let $T$ be given as before. Define a linear map
$$Y_T^0(\cdot,x): \  \h\oplus \C_{\epsilon}[L]\rightarrow (\te{End} V_T)[[x,x^{-1}]]$$
by
\begin{align}
&  Y_T^0(h,x)=p(x)\sum_{m\in\Z}h^{\mu}(m)x^{-m-1}=p(x)\sum_{k\in \Z_{N}}\xi^{k}\mu^{k}(h)(\xi^kx),\\
&  Y_T^0(e_\al,x)=E^\mu_-(-\al,x)E^\mu_+(-\al,x)e_\al x^{\al\uz}p(x)^{\half\<\al,\al\>}
x^{\frac{1}{2}\<\alpha_{(0)}-\al,\al\>}
\end{align}
for $h\in\h,\  \al\in L$.
\end{de}

Note that  for $\al\in L$, we have
$\al_{(0)}=\sum_{k\in \Z_{N}}\mu^k\al$.
Since $$\frac{1}{2}\<\alpha_{(0)},\al\>=\sum_{k\in\Z_N}\half\<\mu^k\al,\al\> \in \Z$$
 by (\ref{sum-even}) and since $\half \<\alpha,\alpha\>\in \Z$, we have
$\frac{1}{2}\<\alpha_{(0)}-\al,\al\> \in \Z$.

\begin{rem}\label{simple-facts}
{\em We here give some simple facts. For $\al\in L$,  we have
\begin{align}\label{fact1}
\sum_{0\ne k\in\Z_N}\<\mu^k(\al),\al\>=\sum_{k\in\Z_N}\<\mu^k(\al),\al\>-\<\al,\al\>=\<\al_{(0)}-\al,\al\>,
\end{align}
\begin{align}\label{fact2}
 & \sum_{0\ne k\in\Z_N}\frac{1}{1-\xi^k}\<\mu^k(\al),\al\>\nonumber\\
 =& \half \(\sum_{0\ne k\in\Z_N}\frac{1}{1-\xi^k}\<\mu^k(\al),\al\>
  + \sum_{0\ne k\in\Z_N}\frac{1}{1-\xi^{-k}}\<\mu^{-k}(\al),\al\>\) \nonumber\\
  =&\half \sum_{0\ne k\in\Z_N}\<\mu^k(\al),\al\>\(\frac{1}{1-\xi^k}
  +\frac{1}{1-\xi^{-k}}\) \nonumber\\
  =&\half\sum_{0\ne k\in\Z_N}\<\mu^k(\al),\al\>\nonumber\\
  =&\half\<\al_{(0)}-\al,\al\>,
\end{align}
\begin{align}\label{fact3}
 \sum_{0\ne k\in\Z_N}\frac{\xi^k}{1-\xi^k}\<\mu^k(\al),\al\>
 = \sum_{0\ne k\in\Z_N}\(\frac{1}{1-\xi^k}-1\)\<\mu^k(\al),\al\>
 =\half \<\al-\al\uz,\al\>.\   \   \
\end{align}}
\end{rem}

We have:

\begin{prop}\label{relations-AL}
The following relations hold on $V_T$ for $h,h'\in\h,\  \al,\be\in L$:
\begin{align}
&[Y_T^0(h,x_1),Y_T^0(h',x_2)]=\sum_{k\in\Z_N}\<\mu^k(h),h'\>(p(x_2)\partial_{x_2})
p(x_1)x_1\inverse\delta\(\xi^{-k}\frac{x_2}{x_1}\),\label{h-h'}\\
&[Y_T^0(h,x_1),Y_T^0(e_\al,x_2)]=\sum_{k\in\Z_N}\<\mu^k(h),\al\>Y_T^0(e_\al,x_2)
p(x_1)x_1\inverse\delta\(\xi^{-k}\frac{x_2}{x_1}\),\label{second-he-alpha}\\
&\prod_{0\ne k\in\Z_N}\(x_1/x_2-\xi^k\)^{-\<\al,\mu^k(\be)\>}
\(\frac{x_1-x_2}{p(x_2)}\)^{-\<\al,\be\>-1}    Y_T^0(e_\al,x_1)Y_T^0(e_\be,x_2)\nonumber\\
    &\  \   -
\prod_{0\ne k\in\Z_N}\(-\xi^k+x_1/x_2\)^{-\<\al,\mu^k(\be)\>}
    \(\frac{-x_2+x_1}{p(x_2)}\)^{-\<\al,\be\>-1}
    Y_T^0(e_\be,x_2)Y_T^0(e_\al,x_1)\nonumber\\
    &=\epsilon_\mu(\al,\be)Y_T^0(e_{\al+\be},x_2)
        p(x_1)x_1\inverse\delta\(\frac{x_2}{x_1}\).\label{eq:e-al-e-be-commutator}
\end{align}
\end{prop}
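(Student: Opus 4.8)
The plan is to verify the three relations \eqref{h-h'}, \eqref{second-he-alpha}, \eqref{eq:e-al-e-be-commutator} directly from the explicit formulas in Definition \ref{dYT0}, reducing everything to the Heisenberg commutation data and the operator-product relation \eqref{Epm-comm} of Lemma \ref{Lep-Epm}. For \eqref{h-h'}: since $Y_T^0(h,x)=p(x)\sum_{m\in\Z}h^\mu(m)x^{-m-1}$ and $[h^\mu(m),h'^\mu(n)]=m\delta_{m+n,0}\<h_{(m)},h'\>(N\mathbf k)\cdot\frac1N=m\delta_{m+n,0}\<h_{(m)},h'\>$ on $V_T$ (level $1/N$), I would expand the bracket, use $\<h_{(m)},h'\>=\sum_{k\in\Z_N}\<\mu^k h,h'\>\xi^{-mk}$, and recognize $\sum_m m\,\xi^{-mk}x_1^{-m-1}x_2^{m-1}=\partial_{x_2}\bigl(x_1\inverse\delta(\xi^{-k}x_2/x_1)\bigr)$. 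Carrying the prefactors $p(x_1)p(x_2)$ through and using $p(x_2)\partial_{x_2}(x_1\inverse\delta(\xi^{-k}x_2/x_1))\cdot(\text{stuff})$ together with the identity $f(x_2)g(x_1)x_1\inverse\delta(cx_2/x_1)=f(x_2)g(cx_2)x_1\inverse\delta(cx_2/x_1)$ gives the stated form. Relation \eqref{second-he-alpha} is analogous but easier: I would compute $[h^\mu(m),E_\pm^\mu(-\al,x_2)]$, which is the scalar $-\sum$ coming from $[h^\mu(m),\al^\mu(n)]$, assemble the generating function, and again collapse it into $p(x_1)x_1\inverse\delta(\xi^{-k}x_2/x_1)$ times $Y_T^0(e_\al,x_2)$; the $e_\al x^{\al\uz}$ and power-of-$x$ factors commute with $h^\mu(m)$ up to the $\h\uz$-action, which contributes only the $k\equiv 0$ term and is absorbed into the sum.

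For \eqref{eq:e-al-e-be-commutator}, which I expect to be the main obstacle, the plan is: first move $E_+^\mu(-\al,x_1)$ past $E_-^\mu(-\be,x_2)$ using \eqref{Epm-comm}, producing the factor $\prod_{k\in\Z_N}(1-\xi^{-k}x_2/x_1)^{\<\mu^k\al,\be\>}$; then commute $e_\al x_1^{\al\uz}$ past $E_-^\mu(-\be,x_2)E_+^\mu(-\be,x_2)e_\be x_2^{\be\uz}$, picking up $\epsilon(\al,\be)/\epsilon(\be,\al)$-type cocycle factors and the monomial $x_1^{\<\al\uz,\be\>}$ coming from $x_1^{\al\uz}$ acting on the $T_{(\be\uz)}$-grading, and symmetrically for the other ordering. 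After normal-ordering both products $Y_T^0(e_\al,x_1)Y_T^0(e_\be,x_2)$ and $Y_T^0(e_\be,x_2)Y_T^0(e_\al,x_1)$ against the same normally ordered expression $\no{Y_T^0(e_\al,x_1)Y_T^0(e_\be,x_2)}$, the two sides of the claimed identity differ exactly by multiplying by $\iota_{x_1,x_2}$ versus $\iota_{x_2,x_1}$ of a common rational function $R(x_1,x_2)$; the left side of \eqref{eq:e-al-e-be-commutator} is then $(\iota_{x_1,x_2}-\iota_{x_2,x_1})R(x_1,x_2)$ times that normally ordered expression.

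The heart is to identify $R(x_1,x_2)$ and show this difference is a delta-function. Collecting the factors: the $k\equiv0$ part of $\prod_k(1-\xi^{-k}x_2/x_1)^{\<\mu^k\al,\be\>}=(1-x_2/x_1)^{\<\al,\be\>}$ combines with the explicit $(\frac{x_1-x_2}{p(x_2)})^{-\<\al,\be\>-1}$ prefactor and with $p(x_1)^{\half\<\al,\al\>}p(x_2)^{\half\<\be,\be\>}$ versus $p(x_2)^{\half\<\be,\be\>}p(x_1)^{\half\<\al,\al\>}$ (these match) to leave a simple pole $\sim p(x_1)/(x_1-x_2)$; the surviving $k\ne0$ factors $\prod_{0\ne k}(1-\xi^{-k}x_2/x_1)^{\<\mu^k\al,\be\>}$ are regular and invertible at $x_1=x_2$, and get cancelled against the matching $\prod_{0\ne k}(x_1/x_2-\xi^k)^{-\<\al,\mu^k\be\>}$ factors written in the statement (up to monomials $(-\xi^k)^{\bullet}$ and powers of $x_1/x_2$); the bookkeeping of those monomials, together with the cocycle identity $\epsilon(\al,\be)\prod_{0\ne k}(1-\xi^k)^{-\<\al,\mu^k\be\>}=\epsilon_\mu(\al,\be)$ from Definition \ref{def-epsilon-mu} and the arithmetic facts \eqref{fact1}–\eqref{fact3} of Remark \ref{simple-facts} (which is exactly why those facts were recorded), is precisely what forces the residual scalar to be $\epsilon_\mu(\al,\be)$ and the residual $x$-power on $\no{\,\cdot\,}$ to reassemble into $Y_T^0(e_{\al+\be},x_2)$. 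Then $(\iota_{x_1,x_2}-\iota_{x_2,x_1})\frac{p(x_1)}{x_1-x_2}\cdot(\text{regular, }=1\text{ at }x_1=x_2)=p(x_1)x_1\inverse\delta(x_2/x_1)$ by the standard delta-function identity (as in Lemma \ref{lem:delta-func-tech-calculation4}, case $c=1$, or Corollary \ref{k-form} with $k=0$), giving \eqref{eq:e-al-e-be-commutator}. The delicate points are tracking the half-integer exponents $\frac12\<\al\uz-\al,\al\>$ through the commutations and confirming all the $\xi$-power monomials cancel; I would handle these by reducing, via \eqref{fact1}–\eqref{fact3}, every exponent to an expression in $\<\al\uz,\al\>,\<\al,\be\>,\<\al\uz,\be\>$ and matching the two orderings term by term.
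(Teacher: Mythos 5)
Your proposal follows essentially the same route as the paper's proof: for \eqref{h-h'} and \eqref{second-he-alpha} both reduce to the Heisenberg data for $\wh\h^\mu$ together with the zero-mode action (note, though, that the $\h\uz$ zero mode does not contribute ``only the $k\equiv 0$ term'' — it supplies the ``$+1$'' in the identity $\tfrac{\xi^k}{x_1/x_2-\xi^k}+\tfrac{\xi^k}{x_2/x_1-\xi^k}+1=\delta(\xi^{-k}x_2/x_1)$ for \emph{every} $k\in\Z_N$), and for \eqref{eq:e-al-e-be-commutator} both commute $E_+^\mu$ past $E_-^\mu$ via \eqref{Epm-comm}, bring the two orderings to a common normally ordered expression, and conclude with $(x_1-x_2)^{-1}-(-x_2+x_1)^{-1}=x_1^{-1}\delta(x_2/x_1)$ together with the commutator-map identity $\epsilon_\mu(\be,\al)\prod_{k\in\Z_N}(-\xi^k)^{-\<\al,\mu^k\be\>}=\epsilon_\mu(\al,\be)$. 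The one misattribution is your appeal to the arithmetic facts \eqref{fact1}--\eqref{fact3} of Remark \ref{simple-facts}: those are not needed for this proposition (the exponents of $x_1,x_2,p(x_1),p(x_2)$ already match between the two orderings up to the single rational factor) and are instead recorded for the proof of Lemma \ref{normal-order-derivative}.
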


\begin{proof} The first relation holds as
\begin{align*}
&[Y_T^0(h,x_1),Y_T^0(h',x_2)]\nonumber\\
=&p(x_1)p(x_2)\sum_{\bar{r},\bar{s}\in \Z_{N}}\frac{1}{N}\<\mu^{\bar{r}}(h),\mu^{\bar{s}}(h')\>
\frac{\partial}{\partial x_2}x_1^{-1}\delta\(\xi^{\bar{s}-\bar{r}}\frac{x_2}{x_1}\)
\nonumber\\
=&p(x_1)p(x_2)\sum_{\bar{r},\bar{s}\in \Z_{N}}\frac{1}{N}\<\mu^{\bar{r}-\bar{s}}(h),h'\>
\frac{\partial}{\partial x_2}x_1^{-1}\delta\(\xi^{\bar{s}-\bar{r}}\frac{x_2}{x_1}\)
\nonumber\\
=&\sum_{k\in\Z_N}\<\mu^k(h),h'\>(p(x_2)\partial_{x_2})
p(x_1)x_1\inverse\delta\(\xi^{-k}\frac{x_2}{x_1}\).
\end{align*}
For $h\in\h$, set
\begin{align}
&h^\mu_\pm(x)=\sum_{n\in \Z_{\pm}}h^\mu(n)x^{-n-1}
=\sum_{n\in \Z_{\pm}}\sum_{k\in\Z_N}\xi^{-nk}(\mu^{k}h)(n)x^{-n-1},\\
&h^\mu(0)=\sum_{k\in \Z_{N}}(\mu^{k}h)(0).
\end{align}
We have
\begin{align*}
&\left[h^\mu_\pm(x_1),\sum_{n\in \Z_{\mp}}\al^\mu(n)\frac{x_2^{-n}}{n}\right]\nonumber\\
=&-\sum_{m\in \Z_{\pm}}\sum_{r,s\in \Z_{N}}\frac{1}{N}\<\mu^{r}h,\mu^s\al\>\xi^{m(s-r)}x_1^{-m-1}x_2^m
\nonumber\\
=&-\sum_{m\in \Z_{\pm}}\sum_{r,s\in \Z_{N}}\frac{1}{N}\<\mu^{r-s}h,\al\>\xi^{m(s-r)}x_1^{-m-1}x_2^m\nonumber\\
=&-\sum_{m\in \Z_{\pm}}\sum_{k\in \Z_{N}}\<\mu^{k}h,\al\>\xi^{-mk}x_1^{-m-1}x_2^m\nonumber\\
=&-\sum_{k\in \Z_{N}}\<\mu^{k}h,\al\>\frac{\xi^kx_1\inverse}{(x_1/x_2)^{\pm 1}-\xi^k}.
\end{align*}
Thus
\begin{align*}
 & [h_+^\mu(x_1),E_-^\mu(-\al,x_2)]=E_-^\mu(-\al,x_2)\sum_{k\in\Z_N}\<\mu^k(h),\al\>
  \frac{\xi^kx_1\inverse}{x_1/x_2-\xi^k},\\
  & [h_{-}^\mu(x_1),E_+^\mu(-\al,x_2)]=E_+^\mu(-\al,x_2)\sum_{k\in\Z_N}\<\mu^k(h),\al\>
  \frac{\xi^kx_1\inverse}{x_2/x_1-\xi^k}.
\end{align*}
We also have
\begin{align*}
\left[x_1^{-1}p(x_1)\sum_{k\in \Z_{N}}\mu^{k}(h)(0),e_{\alpha}\right]
=x_1^{-1}p(x_1)\sum_{k\in \Z_{N}}\<\mu^{k}(h),\al\>e_{\alpha}.
\end{align*}
Then we get the second relation by using the identity
$$\frac{\xi^k}{x_1/x_2-\xi^k}+\frac{\xi^k}{x_2/x_1-\xi^k}+1=\delta\left(\frac{\xi^{-k}x_2}{x_1}\right).$$
As for the last relation, we shall use (\ref{Epm-comm}).
Note that
\begin{align}
\prod_{k\in \Z_{N}}\left(\frac{x_1}{x_2}-\xi^{k}\right)^{-\<\alpha,\mu^k\beta\>}
=&\prod_{k\in \Z_{N}}
\left(\frac{x_1}{x_2}-\xi^{-k}\right)^{-\<\mu^k\alpha,\beta\>}   \nonumber\\
=&\prod_{k\in \Z_{N}}\left(1-\frac{\xi^{-k}x_2}{x_1}\right)^{-\<\mu^k\alpha,\beta\>}
\prod_{k\in \Z_{N}}\left(\frac{x_2}{x_1}\right)^{\<\mu^k\alpha,\beta\>}\nonumber\\
=&\left(\frac{x_2}{x_1}\right)^{\<\al\uz,\beta\>}\prod_{k\in \Z_{N}}\left(1-\frac{\xi^{-k}x_2}{x_1}\right)^{-\<\mu^k\alpha,\beta\>}
\end{align}
and that $x_1^{\al\uz}e_{\beta}^{\mu}=x_1^{\<\al\uz,\beta\>}e_{\be}^{\mu}x_1^{\al\uz}$.
Using these and (\ref{Epm-comm}) we get
\begin{align*}
&\prod_{0\ne k\in\Z_N}\(x_1/x_2-\xi^k\)^{-\<\al,\mu^k(\be)\>}
\(\frac{x_1-x_2}{p(x_2)}\)^{-\<\al,\be\>-1}    Y_T^0(e_\al,x_1)Y_T^0(e_\be,x_2)\nonumber\\
=&\prod_{k\in\Z_N}\(x_1/x_2-\xi^k\)^{-\<\al,\mu^k(\be)\>}
x_2^{-\<\al,\be\>}  p(x_2)^{\<\al,\be\>+1}  Y_T^0(e_\al,x_1)Y_T^0(e_\be,x_2)(x_1-x_2)^{-1}\nonumber\\
=&\epsilon_\mu(\al,\be)E_{-}^{\mu}(-\al,x_1)E_{-}^{\mu}(-\beta,x_2)
E_{+}^{\mu}(-\al,x_1)E_{+}^{\mu}(-\beta,x_2)e_{\al+\beta}^{\mu}x_1^{\al\uz}x_2^{\beta\uz}
x_2^{\<\al\uz-\al,\beta\>}\nonumber\\
&\times  x_1^{\frac{1}{2}\<\alpha_{(0)}-\al,\al\>}x_2^{\frac{1}{2}\<\beta_{(0)}-\beta,\beta\>}
p(x_1)^{\half\<\al,\al\>}p(x_2)^{\half\<\beta,\beta\>}p(x_2)^{\<\al,\beta\>+1}(x_1-x_2)^{-1}.
\end{align*}
Similarly, we have
\begin{align*}
&\prod_{0\ne k\in\Z_N}\(-\xi^k+x_1/x_2\)^{-\<\al,\mu^k(\be)\>}
    \(\frac{-x_2+x_1}{p(x_2)}\)^{-\<\al,\be\>-1}
    Y_T^0(e_\be,x_2)Y_T^0(e_\al,x_1)\nonumber\\
 =&   \prod_{k\in\Z_N}\(-\xi^k+x_1/x_2\)^{-\<\al,\mu^k(\be)\>}
    x_2^{-\<\al,\be\>}p(x_2)^{\<\al,\be\>+1}
    Y_T^0(e_\be,x_2)Y_T^0(e_\al,x_1)(-x_2+x_1)^{-1}\nonumber\\
    =&\epsilon_\mu(\be,\al)\prod_{k\in \Z_{N}}(-\xi^{k})^{-\<\al,\mu^k\be\>}E_{-}^{\mu}(-\al,x_1)E_{-}^{\mu}(-\beta,x_2)
E_{+}^{\mu}(-\al,x_1)E_{+}^{\mu}(-\beta,x_2)e_{\al+\beta}^{\mu}x_2^{\<\be\uz-\be,\al\>} \nonumber\\
&\times x_1^{\al\uz}x_2^{\beta\uz}
x_1^{\frac{1}{2}\<\alpha_{(0)}-\al,\al\>}x_2^{\frac{1}{2}\<\beta_{(0)}-\beta,\beta\>}p(x_1)^{\half\<\al,\al\>}
p(x_2)^{\half\<\beta,\beta\>}p(x_2)^{\<\al,\beta\>+1}(-x_2+x_1)^{-1}\nonumber\\
=&\epsilon_\mu(\al,\be)E_{-}^{\mu}(-\al,x_1)E_{-}^{\mu}(-\beta,x_2)
E_{+}^{\mu}(-\al,x_1)E_{+}^{\mu}(-\beta,x_2)e_{\al+\beta}^{\mu}x_2^{\<\be\uz-\be,\al\>} \nonumber\\
&\times x_1^{\al\uz}x_2^{\beta\uz}
x_1^{\frac{1}{2}\<\alpha_{(0)}-\al,\al\>}x_2^{\frac{1}{2}\<\beta_{(0)}-\beta,\beta\>}p(x_1)^{\half\<\al,\al\>}
p(x_2)^{\half\<\beta,\beta\>}p(x_2)^{\<\al,\beta\>+1}(-x_2+x_1)^{-1}.
\end{align*}
Recall that
$$\epsilon_\mu(\be,\al)\prod_{k\in \Z_{N}}(-\xi^{k})^{-\<\al,\mu^k\be\>}=\epsilon_\mu(\al,\be).$$
Then using the fact $(x_1-x_2)^{-1}-(-x_2+x_1)^{-1}=x_{1}^{-1}\delta(x_2/x_1)$
and the delta-function substitution we obtain the last relation.
\end{proof}

\begin{lem}\label{equivariance-generators}
The following relations hold on $V_{T}$ for $h\in \h,\ \al\in L$:
\begin{align*}
&Y_T^0(\mu (h),x)=\xi^{r}Y_T^0(h,\xi^{-1}x),\\
&Y_T^0(\wh\mu (e_{\al}),x)=\xi^{\frac{1}{2}\<\al,\al\>r}Y_T^0(e_{\al},\xi^{-1}x).
\end{align*}
\end{lem}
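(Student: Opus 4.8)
The plan is to establish both identities by direct computation from Definition~\ref{dYT0}. The one elementary ingredient is that, since $p(x)=x^{r+1}p_0(x^N)$ and $\xi^N=1$, we have $p(\xi^{-1}x)=\xi^{-r-1}p(x)$, equivalently $p(x)=\xi^{r+1}p(\xi^{-1}x)$. For the first identity I would reindex the defining sum $h^\mu(m)=\sum_{k\in\Z_N}(\mu^k h)(m)\xi^{-mk}$ to obtain $(\mu h)^\mu(m)=\xi^m h^\mu(m)$ for all $m\in\Z$, and then compute
\[
Y_T^0(\mu h,x)=p(x)\sum_{m\in\Z}\xi^m h^\mu(m)x^{-m-1}
=\xi^{-1}p(x)\sum_{m\in\Z}h^\mu(m)(\xi^{-1}x)^{-m-1};
\]
inserting $p(x)=\xi^{r+1}p(\xi^{-1}x)$ turns the right-hand side into $\xi^{r}Y_T^0(h,\xi^{-1}x)$, as claimed.

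For the second identity, since $\wh\mu(e_\al)=\mu(e_\al)=\eta_\al e_{\mu\al}$ for a sign $\eta_\al\in\{\pm1\}$, linearity of $Y_T^0$ reduces the statement to comparing $\eta_\al Y_T^0(e_{\mu\al},x)$ with $\xi^{\half\<\al,\al\>r}Y_T^0(e_\al,\xi^{-1}x)$, which I would carry out by matching the factors in Definition~\ref{dYT0} one at a time. The exponentials agree because Lemma~\ref{Lep-Epm} gives $E^\mu_\pm(-\mu\al,x)=E^\mu_\pm(-\al,\xi^{-1}x)$. For the remaining factors I would use the $\mu$-invariance of $\sum_{k\in\Z_N}\mu^k$, which yields $(\mu\al)\uz=\al\uz$; combined with $\<\mu\al,\mu\al\>=\<\al,\al\>$ and $\<\al\uz,\mu\al\>=\<\al\uz,\al\>$, this shows that the powers of $x$ and of $p(x)$ attached to $e_{\mu\al}$ coincide with those attached to $e_\al$, while the substitution $x\mapsto\xi^{-1}x$ on the right produces the scalars $\xi^{-(r+1)\<\al,\al\>/2}$ (from $p(\xi^{-1}x)^{\half\<\al,\al\>}$) and $\xi^{-\<\al\uz-\al,\al\>/2}$ (from $(\xi^{-1}x)^{\half\<\al\uz-\al,\al\>}$), together with the operator $\xi^{-\al\uz}$ from $(\xi^{-1}x)^{\al\uz}$, which acts on $T_{\be\uz}$ by $\xi^{-\<\al\uz,\be\>}$. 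Finally I would relate the lattice operators $e_{\mu\al}$ and $e_\al$ on a graded piece $T_{\be\uz}$ by means of the twist relation $\wh\mu(e_\al^\mu)w=\xi^{-\<\al\uz,\be\>-\<\al\uz,\al\>/2}e_\al^\mu w$ of Definition~\ref{def-T}. Summing all the exponents of $\xi$ produced, the $\<\al\uz,\be\>$- and $\<\al\uz,\al\>$-contributions cancel and the surviving exponent equals $(r+1)\half\<\al,\al\>-\half\<\al,\al\>=\half\<\al,\al\>r$, which is exactly the claimed scalar.

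I expect the main obstacle to be the bookkeeping of roots of unity in that last step: for the signs to cancel, the lift $\wh\mu$ on $\C_{\epsilon_\mu}[L]$ entering Definition~\ref{def-T} must satisfy $\wh\mu(e_\al^\mu)=\eta_\al e_{\mu\al}^\mu$ with the same sign $\eta_\al$ as $\mu(e_\al)=\eta_\al e_{\mu\al}$ on $\C_\epsilon[L]$. This compatibility is available: from Definition~\ref{def-epsilon-mu} one checks that $\epsilon_\mu(\mu\al,\mu\be)/\epsilon_\mu(\al,\be)=\epsilon(\mu\al,\mu\be)/\epsilon(\al,\be)$, so $e_\al^\mu\mapsto\eta_\al e_{\mu\al}^\mu$ is indeed an automorphism of $\C_{\epsilon_\mu}[L]$ restricting to the identity on the $\mu$-fixed elements, and I would take this as the chosen lift. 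Once that normalization is in place, the remaining manipulations are routine and both identities follow.
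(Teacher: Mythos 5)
Your proof is correct and follows essentially the same route as the paper: for the first identity, both proofs reindex the sum defining $h^\mu(m)$ and use $p(\xi^{-1}x)=\xi^{-r-1}p(x)$; for the second, both match the exponential factors via Lemma~\ref{Lep-Epm} and then carry out the same $\xi$-exponent bookkeeping using the twist relation of Definition~\ref{def-T}.

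The one place your treatment genuinely improves on the paper's is the compatibility of the two lifts of $\mu$. The paper's proof moves from $\wh\mu(e_\al^\mu)$ (the lift on $\C_{\epsilon_\mu}[L]$ appearing in the module condition \eqref{def-T-modules}) to $Y_T^0(\wh\mu(e_\al),x)$ (which involves the lift $\wh\mu$ on $\C_\epsilon[L]$ from Lemma~\ref{lem:mu-auto-VL}) simply by ``noticing that $\wh\mu(e_\al)=\lambda e_{\mu\al}$,'' leaving implicit that the same scalar $\lambda$ relates $\wh\mu(e_\al^\mu)$ to $e_{\mu\al}^\mu$. You spell out why a common choice of $\lambda=\eta_\al$ is available: the identity $\epsilon_\mu(\mu\al,\mu\be)/\epsilon_\mu(\al,\be)=\epsilon(\mu\al,\mu\be)/\epsilon(\al,\be)$ (which holds because $\<\mu\al,\mu^{k+1}\be\>=\<\al,\mu^k\be\>$) shows the two sign obstructions coincide, so $e_\al^\mu\mapsto\eta_\al e_{\mu\al}^\mu$ is an automorphism of $\C_{\epsilon_\mu}[L]$ fixing the $\mu$-fixed vectors, which is exactly the normalization Definition~\ref{def-T} presumes. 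Organizationally you also differ slightly: the paper first verifies the identity on vectors in $T$ and then extends to $V_T$ by invoking the $\wh\h^\mu$-commutation relation \eqref{second-he-alpha}, whereas you match the operators factor-by-factor directly (the $E^\mu_\pm$ pieces act only on the $S(\wh\h^\mu_-)$ tensor factor and agree outright; the remaining pieces act only on $T$), which makes the extension step unnecessary. Both are valid; your route is a little tighter, and the lift-compatibility remark fills a genuine, if minor, gap.
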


\begin{proof} Note that $p(\xi^{-1}x)=\xi^{-r-1}p(x)$. Using this we get
\begin{align*}
& Y_T^0(h,\xi^{-1}x)=p(\xi^{-1}x)\sum_{m\in\Z}h_{(m)}(m)(\xi^{-1}x)^{-m-1} \nonumber\\
=&\xi^{-r}p(x)\sum_{m\in\Z}\xi^{m}h_{(m)}(m)x^{-m-1}=\xi^{-r}Y_T^0(\mu (h),x),
\end{align*}
proving the first relation. For the second relation, we first verify that it is true on $T$.
 Let $w\in T_{(\beta_{(0)})}$ with $\beta\in L$. Recall the relation (\ref{def-T-modules}):
$$\wh{\mu}(e^{\mu}_{\al})w=\xi^{-\<\al_{(0)},\beta\>-\frac{1}{2}\<\al_{(0)},\al\>}e^{\mu}_\al w.$$
Then we have
\begin{align*}
&Y_{T}^{0}(e^{\mu}_{\al},\xi^{-1}x)w\nonumber\\
=&E^\mu_-(-\al,\xi^{-1}x)E^\mu_+(-\al,\xi^{-1}x)e_\al^{\mu} (\xi^{-1}x)^{\al\uz}p(\xi^{-1}x)^{\half\<\al,\al\>}
(\xi^{-1}x)^{\frac{1}{2}\<\alpha_{(0)}-\al,\al\>}w\nonumber\\
=&\xi^{\half\<\al,\al\>(-r)}\xi^{-\frac{1}{2}\<\alpha_{(0)},\al\>-\<\al_{(0)},\beta\>} E^\mu_-(-\mu(\al),x)E^\mu_+(-\mu(\al),x)
e_\al^{\mu} x^{\al\uz}p(x)^{\half\<\al,\al\>}
x^{\frac{1}{2}\<\alpha_{(0)}-\al,\al\>}w\nonumber\\
=&\xi^{\half\<\al,\al\>(-r)}E^\mu_-(-\mu(\al),x)E^\mu_+(-\mu(\al),x)
\hat{\mu}(e_\al^{\mu}) x^{(\mu\al)\uz}p(x)^{\half\<\mu\al,\mu\al\>}
x^{\frac{1}{2}\<(\mu\alpha)_{(0)}-\mu\al,\mu\al\>}w\nonumber\\
=&\xi^{\half\<\al,\al\>(-r)}Y_{T}^{0}(\wh{\mu}(e^{\mu}_{\al}),x)w,
\end{align*}
noticing that $\wh{\mu}(e_{\al})=\lambda e_{\mu(\al)}$ with $\lambda\in \C^{\times}$ and $(\mu\al)_{(0)}=\al\uz$.
This shows that the second relation holds on $T$.
Since $V_{T}$ as an $\wh\h^\mu$-module is generated by $T$,
it then follows from  (\ref{second-he-alpha}) in Proposition \ref{relations-AL} that the second relation holds on
the whole space $V_{T}$.
\end{proof}

Let $T$ be given as before. Set
$$U_T=\set{Y_T^0(u,x)}{u\in \h+ \C_{\epsilon}[L]}.$$
From the commutation relations in Proposition \ref{relations-AL} we see that
$U_{T}$ is a quasi $S$-local subspace of $\E(V_T)$, in particularly, $U_{T}$ is quasi compatible.
Then by Theorem \ref{thm:abs-construct-non-h-adic},
$U_T$ generates a nonlocal vertex algebra $\<U_T\>_{\phi}$ naturally
with $V_{T}$ as a $\phi$-coordinated quasi module.

Set
$$\Gamma_{N}=\{ \xi^{k}\ |\  k\in \Z\}\subset \C^{\times}.$$
From Proposition \ref{relations-AL},  we see that $U_T$ is $\Gamma_{N}$-quasi compatible.
On the other hand, from Lemma \ref{equivariance-generators}, $U_T$ is $\Gamma_{N}$-stable.
Fix a linear character $\chi_{\phi}: \Z_{N}\rightarrow \C^{\times}$ with $\chi_{\phi}(k)=\xi^{k}$, and
we define a group homomorphism $R:\Z_N\rightarrow \te{GL}(\E(V_T))$ by
\begin{align}
R(k)(a(x))=a(\xi^{-k}x)=a(\chi_{\phi}(k)^{-1}x)\   \   \   \mbox{ for }k\in\Z_N,\ a(x)\in \E(V_T).
\end{align}
Recall that $\chi: \Z_{N}\rightarrow \C^{\times}$ with $\chi=\chi_{\phi}^{-r}$.
With this, by Theorem \ref{coro:G-va-abs-construct-non-h-adic}  we immediately have:

\begin{coro}\label{lem:UT-va}
The nonlocal vertex algebra $\<U_T\>_\phi$ with the map $R$ is a $(\Z_N,\chi)$-module nonlocal vertex algebra and
$V_T$ is a $(\Z_N,\chi_{\phi})$-equivariant $\phi$-coordinated quasi
$\<U_T\>_\phi$-module with $Y_W(a(x),z)=a(z)$ for $a(x)\in \<U_T\>_\phi$.
\end{coro}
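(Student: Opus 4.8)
The plan is to derive this corollary directly from Theorem~\ref{coro:G-va-abs-construct-non-h-adic}, applied with $W=V_T$, with $G$ the group $\Gamma_N=\{\xi^k\mid k\in\Z\}\subset\C^\times$ of $N$-th roots of unity (identified with $\Z_N$ through $k\mapsto\xi^k$), and with $U=U_T$. Before invoking that theorem I would record that all the data match its set-up: the series $p(x)=x^{r+1}p_0(x^N)$ fixed in~\eqref{p(x)-r} is precisely of the shape required in~\eqref{special-p(x)} for $|G|=N$ (same $r$, same $p_0$ with $p_0(0)\ne0$); the character $\chi_\phi$, $\chi_\phi(k)=\xi^k$, is the natural embedding of $\Z_N\cong\Gamma_N$ into $\C^\times$; the character $\chi=\chi_\phi^{-r}$, i.e.\ $\chi(k)=\xi^{-rk}$, is exactly the one attached to $\chi_\phi$ by~\eqref{def-chi} for this $p(x)$; and the homomorphism $R$ defined here by $R(k)(a(x))=a(\xi^{-k}x)$ is the homomorphism $R_g(a(x))=a(g^{-1}x)$ of Theorem~\ref{coro:G-va-abs-construct-non-h-adic} read through the identification $k\leftrightarrow\xi^k$. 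Thus only the two structural hypotheses on $U_T$ need checking.

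Step~1: $U_T$ is $\Gamma_N$-quasi compatible. From the explicit operator-product formulas of Lemma~\ref{Lep-Epm} together with the commutation relations~\eqref{h-h'}, \eqref{second-he-alpha}, \eqref{eq:e-al-e-be-commutator} of Proposition~\ref{relations-AL}, the product $a(x_1)b(x_2)$ of any two generators $a(x)=Y_T^0(u,x)$, $b(x)=Y_T^0(v,x)$ has all its singularities confined to the loci $x_1/x_2=\xi^k$, $k\in\Z_N$ (the $\delta$-functions on the right of those relations sit at such loci; the factor with exponent $-\<\al,\be\>-1$ in~\eqref{eq:e-al-e-be-commutator} adds only the locus $x_1/x_2=1=\xi^0$, while the accompanying powers of $p(x_2)$ are units near $x_2=0$ since $p_0(0)\ne0$). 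Hence for a finite ordered sequence $a_1(x),\dots,a_n(x)$ in $U_T$ one can take $f(x)=\prod_{j\in\Z_N}(x-\xi^j)^{M}$ with $M$ large enough so that $\bigl(\prod_{1\le i<j\le n}f(x_i/x_j)\bigr)a_1(x_1)\cdots a_n(x_n)\in\te{Hom}(V_T,V_T((x_1,\dots,x_n)))$, and $f\in\C_{\Gamma_N}[x]$; that $U_T$ is quasi compatible at all is the quasi $S$-locality already noted. Step~2: $U_T$ is $\Gamma_N$-stable. For $k\in\Z_N$, $R(k)$ carries $Y_T^0(u,x)$ to $Y_T^0(u,\xi^{-k}x)$, and iterating Lemma~\ref{equivariance-generators} gives $Y_T^0(h,\xi^{-k}x)=\xi^{-kr}Y_T^0(\mu^k(h),x)$ for $h\in\h$ and $Y_T^0(e_\al,\xi^{-k}x)=\xi^{-\half\<\al,\al\>kr}Y_T^0(\wh\mu^k(e_\al),x)$ for $\al\in L$. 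Since $\mu^k(h)\in\h$, since $\wh\mu$ maps each $e_\al$ to a scalar multiple of $e_{\mu^k(\al)}\in\C_\epsilon[L]$, and since $U_T$ is a linear subspace of $\E(V_T)$ (the assignment $u\mapsto Y_T^0(u,x)$ being linear), both right-hand sides lie in $U_T$; hence $R(k)(U_T)=U_T$.

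With Steps~1 and~2 established, Theorem~\ref{coro:G-va-abs-construct-non-h-adic} applies word for word and delivers all three assertions at once: $(\<U_T\>_\phi,R)$ is a $(\Z_N,\chi)$-module nonlocal vertex algebra, $V_T$ is a $(\Z_N,\chi_\phi)$-equivariant $\phi$-coordinated quasi $\<U_T\>_\phi$-module, and $Y_W(a(x),z)=a(z)$ for $a(x)\in\<U_T\>_\phi$ (the last clause being built into the construction of Theorem~\ref{thm:abs-construct-non-h-adic}). I do not anticipate a real obstacle here, since the substantive work is carried by Proposition~\ref{relations-AL} and Lemma~\ref{equivariance-generators}, which are already in place; the only points demanding a little care are the bookkeeping of the scalar factors $\xi^{-kr}$ and $\xi^{-\half\<\al,\al\>kr}$ in Step~2 (harmless precisely because $U_T$ is closed under scalar multiplication) and confirming that $\chi=\chi_\phi^{-r}$ is the character normalized by~\eqref{def-chi}.
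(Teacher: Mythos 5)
Your proposal takes exactly the same route as the paper: it establishes that $U_T$ is $\Gamma_N$-quasi compatible via Proposition~\ref{relations-AL} and $\Gamma_N$-stable via Lemma~\ref{equivariance-generators}, then invokes Theorem~\ref{coro:G-va-abs-construct-non-h-adic} with $G=\Gamma_N\cong\Z_N$. Your Steps~1 and~2 simply spell out in more detail the two hypothesis checks that the paper records in a single sentence each before citing the theorem.
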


Furthermore, we have:

\begin{lem}\label{lem:lattice-va-quasi-phi-mod-YE-rel}
Let $h,h'\in\h$ and let $\al,\be\in L$. Then
\begin{align}
&[Y_\E^\phi(Y_T^0(h,x),x_1),Y_\E^\phi(Y_T^0(h',x),x_2)]
=\<h,h'\>\frac{\partial}{\partial x_2}x_1\inverse
\delta\(\frac{x_2}{x_1}\),\label{eq:lattice-va-phi-quasi-mod-rel-temp-1}\\
&[Y_\E^\phi(Y_T^0(h,x),x_1),Y_\E(Y_T^0(e_\al,x),x_2)]
=\<h,\al\>Y_\E^\phi(Y_T^0(e_\al,x),x_2)x_1\inverse
\delta\(\frac{x_2}{x_1}\),\label{eq:lattice-va-phi-quasi-mod-rel-temp-2}\\
&(x_1-x_2)^{-\<\al,\be\>-1}
    Y_\E^\phi(Y_T^0(e_\al,x),x_1)Y_\E^\phi(Y_T^0(e_\be,x),x_2)\nonumber\\
    &\  \  -
(-x_2+x_1)^{-\<\al,\be\>-1}
    Y_\E^\phi(Y_T^0(e_\be,x),x_2)Y_\E^\phi(Y_T^0(e_\al,x),x_1)\nonumber\\
    &=\epsilon(\al,\be)Y_\E^\phi(Y_T^0(e_{\al+\be},x),x_2)
    x_1\inverse\delta\(\frac{x_2}{x_1}\).\label{eq:lattice-va-phi-quasi-mod-rel-temp-3}
\end{align}
\end{lem}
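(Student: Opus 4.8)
The plan is to derive each of the three relations directly from the generalized commutator formula (Theorem \ref{prop:tech-calculation5}), applied with $W = V_T$ and $V = \<U_T\>_\phi$, by feeding in the corresponding relation of Proposition \ref{relations-AL}. The point is that the right-hand side of each relation in Proposition \ref{relations-AL} already has exactly the shape demanded of the right-hand side of the hypothesis of Theorem \ref{prop:tech-calculation5}: the nonzero constants entering the delta functions are precisely $c_k = \xi^{-k}$, $k\in\Z_N$, and among these exactly one, namely $c_0 = \xi^0 = 1$, equals $1$. Consequently the $\phi$-coordinatization annihilates every term supported at $\xi^{-k}$ with $k\ne 0$ and sends $(p(x_2)\partial_{x_2})^i p(x_1)x_1\inverse\delta(x_2/x_1)$ to $\partial_{x_2}^i x_1\inverse\delta(x_2/x_1)$. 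Note also that $Y_T^0(h,x), Y_T^0(e_\al,x)\in U_T\subset\<U_T\>_\phi$ and $1_{V_T}\in\<U_T\>_\phi$, so all the vectors $\al_i,\be_i,\mu_j,\nu_j,\gamma_{k,i}$ produced below lie in $\<U_T\>_\phi$, as required.

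For \eqref{eq:lattice-va-phi-quasi-mod-rel-temp-1} I would apply Theorem \ref{prop:tech-calculation5} to \eqref{h-h'} with $g_1 = h_1 = 1$, $\al_1(x) = \nu_1(x) = Y_T^0(h,x)$, $\be_1(x) = \mu_1(x) = Y_T^0(h',x)$, $t = 1$, $\gamma_{k,1}(x) = \<\mu^k(h),h'\>\,1_{V_T}$ and $\gamma_{k,0} = 0$; since only the datum $i = 1$, $k = 0$ survives in the conclusion and $Y_\E^\phi(1_{V_T},x_2) = 1_{V_T}$, this is exactly \eqref{eq:lattice-va-phi-quasi-mod-rel-temp-1}. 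The relation \eqref{eq:lattice-va-phi-quasi-mod-rel-temp-2} follows in the same way from \eqref{second-he-alpha} with $g_1 = h_1 = 1$, $t = 0$ and $\gamma_{k,0}(x) = \<\mu^k(h),\al\>Y_T^0(e_\al,x)$.

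The substantive case is \eqref{eq:lattice-va-phi-quasi-mod-rel-temp-3}, where the rational coefficients are nontrivial. Here I would apply Theorem \ref{prop:tech-calculation5} to \eqref{eq:e-al-e-be-commutator} with $g_1 = h_1$ equal to the rational function $\prod_{0\ne k\in\Z_N}(x_1/x_2-\xi^k)^{-\<\al,\mu^k(\be)\>}\bigl((x_1-x_2)/p(x_2)\bigr)^{-\<\al,\be\>-1}$ (the coefficient $\prod(-\xi^k+x_1/x_2)^{-\<\al,\mu^k(\be)\>}((-x_2+x_1)/p(x_2))^{-\<\al,\be\>-1}$ in the second summand of \eqref{eq:e-al-e-be-commutator} being precisely its $\iota_{x_2,x_1}$-expansion), $t = 0$, and $\gamma_{0,0}(x) = \epsilon_\mu(\al,\be)Y_T^0(e_{\al+\be},x)$. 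The output carries the coefficient $g_1(\phi(x,x_1),\phi(x,x_2))$, and the key step is to simplify it. Writing $\phi(x,x_1)-\phi(x,x_2) = (x_1-x_2)F(x,x_1,x_2)$ with $F\in\C((x))[[x_1,x_2]]$ invertible and $F(x,x_2,x_2) = \partial_{x_2}\phi(x,x_2) = p(\phi(x,x_2))$ (as in the proof of Lemma \ref{lem:delta-func-tech-calculation4}), and using that $\phi(x,x_1)-\xi^k\phi(x,x_2)$ is invertible in $\C((x))[[x_1,x_2]]$ for $\xi^k\ne 1$, I would obtain $g_1(\phi(x,x_1),\phi(x,x_2)) = (x_1-x_2)^{-\<\al,\be\>-1}G(x,x_1,x_2)$ with $G\in\C((x))[[x_1,x_2]]$ a unit and $G(x,x_1,x_1) = \prod_{0\ne k\in\Z_N}(1-\xi^k)^{-\<\al,\mu^k(\be)\>}$, which by Definition \ref{def-epsilon-mu} equals $\epsilon_\mu(\al,\be)\epsilon(\al,\be)\inverse$. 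As $G$ is a unit of $\C((x))[[x_1,x_2]]$ it is fixed by both $\iota_{x,x_1,x_2}$ and $\iota_{x,x_2,x_1}$, so I may cancel it from the identity; on the delta term the substitution $x_1\mapsto x_2$ replaces $\epsilon_\mu(\al,\be)G(x,x_1,x_2)\inverse$ by $\epsilon_\mu(\al,\be)G(x,x_2,x_2)\inverse = \epsilon(\al,\be)$, producing \eqref{eq:lattice-va-phi-quasi-mod-rel-temp-3}.

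The hard part is exactly this last computation: factoring $g_1(\phi(x,x_1),\phi(x,x_2))$ as $(x_1-x_2)^{-\<\al,\be\>-1}$ times a unit $G$, and tracking $G(x,x_1,x_1)$ precisely enough that cancelling $G$ converts the twisted cocycle $\epsilon_\mu$ back into the cocycle $\epsilon$ of $V_L$. Everything else is bookkeeping: reading off the data $\al_i,\be_i,\mu_j,\nu_j,g_i,h_j,\gamma_{k,i}$ from Proposition \ref{relations-AL} and quoting Theorem \ref{prop:tech-calculation5}.
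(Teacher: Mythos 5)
Your proposal is correct and follows essentially the same route as the paper: feed the relations of Proposition \ref{relations-AL} into Theorem \ref{prop:tech-calculation5}, then for \eqref{eq:lattice-va-phi-quasi-mod-rel-temp-3} factor the $\phi$-coordinatized coefficient as $(x_1-x_2)^{-\<\al,\be\>-1}$ times a unit of $\C((x))[[x_1,x_2]]$ whose diagonal value is $\prod_{0\ne k\in\Z_N}(1-\xi^k)^{-\<\al,\mu^k(\be)\>}$, and use the delta-function substitution to cancel the unit and convert $\epsilon_\mu$ back into $\epsilon$. This matches the paper's argument step for step.
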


\begin{proof}
Relations \eqref{eq:lattice-va-phi-quasi-mod-rel-temp-1} and \eqref{eq:lattice-va-phi-quasi-mod-rel-temp-2}  follow immediately from (\ref{h-h'}--\ref{second-he-alpha}) and Theorem \ref{prop:tech-calculation5}.
On the other hand, with relation (\ref{eq:e-al-e-be-commutator}), by Theorem \ref{prop:tech-calculation5} we have
\begin{align*}
  &\iota_{x,x_1,x_2}\prod_{0\ne k\in\Z_N}
  \(\phi(x,x_1)/\phi(x,x_2)-\xi^k\)^{-\<\al,\mu^k(\be)\>}
  \(\frac{\phi(x,x_1)-\phi(x,x_2)}{p(\phi(x,x_2))}\)^{-\<\al,\be\>-1}\\
&\qquad\times
    Y_\E^\phi(Y_T^0(e_\al,x),x_1)Y_\E^\phi(Y_T^0(e_\be,x),x_2)\nonumber\\
    -&\iota_{x,x_2,x_1}
\prod_{0\ne k\in\Z_N}
  \(\phi(x,x_1)/\phi(x,x_2)-\xi^k\)^{-\<\al,\mu^k(\be)\>}
  \(\frac{-\phi(x,x_2)+\phi(x,x_1)}{p(\phi(x,x_2))}\)^{-\<\al,\be\>-1}\\
&\qquad\times
    Y_\E^\phi(Y_T^0(e_\be,x),x_2)Y_\E^\phi(Y_T^0(e_\al,x),x_1)\\
=&\epsilon_\mu(\al,\be)Y_\E^\phi(Y_T^0(e_{\al+\be},x),x_2)
    x_1\inverse\delta\(\frac{x_2}{x_1}\).
\end{align*}
Noticing that
\begin{align*}
  \frac{\phi(x,x_1)-\phi(x,x_2)}{(x_1-x_2)p(\phi(x,x_2))},\quad \phi(x,x_1)/\phi(x,x_2)-\xi^k
\end{align*}
for $k\ne 0$ in $\Z_{N}$ are invertible elements of $\in \C((x))[[x_1,x_2]]$
and
\begin{align*}
  \lim\limits_{x_1\rightarrow x_2}\frac{\phi(x,x_1)-\phi(x,x_2)}{(x_1-x_2)p(\phi(x,x_2))}=1
  \quad\te{and}\quad
  \lim\limits_{x_1\rightarrow x_2}(\phi(x,x_1)/\phi(x,x_2)-\xi^k)=1-\xi^k,
\end{align*}
we have
\begin{align*}
 & \left(\frac{\phi(x,x_1)-\phi(x,x_2)}{(x_1-x_2)p(\phi(x,x_2))}\right)^{m}x_1\inverse\delta\(\frac{x_2}{x_1}\)
 =x_1\inverse\delta\(\frac{x_2}{x_1}\),\\
 &\left(\phi(x,x_1)/\phi(x,x_2)-\xi^k\right)^{m}x_1\inverse\delta\(\frac{x_2}{x_1}\)
 =(1-\xi^{k})^{m}x_1\inverse\delta\(\frac{x_2}{x_1}\)
\end{align*}
for $m\in \Z$.
Then we get
\begin{align*}
&(x_1-x_2)^{-\<\al,\be\>-1}
    Y_\E^\phi(Y_T^0(e_\al,x),x_1)Y_\E^\phi(Y_T^0(e_\be,x),x_2)\nonumber\\
    &\  \  -
(-x_2+x_1)^{-\<\al,\be\>-1}
    Y_\E^\phi(Y_T^0(e_\be,x),x_2)Y_\E^\phi(Y_T^0(e_\al,x),x_1)\nonumber\\
  =&\(\prod_{0\ne k\in\Z_N}(1-\xi^k)^{\<\al,\mu^k(\be)\>}\)
    \epsilon_\mu(\al,\be)Y_\E^\phi(Y_T^0(e_{\al+\be},x),x_2)
    x_1\inverse\delta\(\frac{x_2}{x_1}\)\nonumber\\
  =&\epsilon(\al,\be)Y_\E^\phi(Y_T^0(e_{\al+\be},x),x_2)
    x_1\inverse\delta\(\frac{x_2}{x_1}\),
\end{align*}
recalling the definition of $\epsilon_\mu$ (see \eqref{eq:2-cocycle-mu}). This
proves \eqref{eq:lattice-va-phi-quasi-mod-rel-temp-3}.
\end{proof}

We also have:

\begin{lem}\label{normal-order-derivative}
The following relation holds in $\<U_T\>_\phi$ for $\al\in L$:
\begin{align}
Y_T^0(\al,x)_{-1}^\phi Y_T^0(e_\al,x)=  p(x)\frac{d}{dx} Y_T^0(e_\al,x).
\end{align}
\end{lem}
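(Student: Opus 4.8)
The plan is to compute $Y_T^0(\al,x)_{-1}^\phi Y_T^0(e_\al,x)$ directly from the definition of $Y_\E^\phi$ and to match it, term by term, with $p(x)\frac{d}{dx}Y_T^0(e_\al,x)$; the two will differ a priori by a scalar multiple of $Y_T^0(e_\al,x)$, and the needed scalar identity will turn out to be exactly \eqref{fact2}. I would begin by recording two elementary formal facts. Since $\partial_z\phi(x,z)=p(\phi(x,z))$ (noted in the proof of Lemma \ref{lem:delta-func-tech-calculation4}), one has $\phi(x,z)-x=p(x)z+\half p(x)p'(x)z^2+\cdots$ and $p(\phi(x,z))/(\phi(x,z)-x)=\partial_z\log\bigl(\phi(x,z)-x\bigr)=z^{-1}+\half p'(x)+\cdots$, so that the constant term in $z$ of $\iota_{x,z}\bigl(p(\phi(x,z))/(\phi(x,z)-x)\bigr)$ is $\half p'(x)$; moreover, for $0\ne k\in\Z_N$ the element $\phi(x,z)-\xi^k x$ is a unit of $\C((x))[[z]]$ with value $(1-\xi^k)x$ at $z=0$, so the constant term in $z$ of $\iota_{x,z}\bigl(p(\phi(x,z))/(\phi(x,z)-\xi^k x)\bigr)$ is $p(x)/(x(1-\xi^k))$.

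Next, writing $\al^\mu(x)=\sum_{m\in\Z}\al^\mu(m)x^{-m-1}$ so that $Y_T^0(\al,x)=p(x)\al^\mu(x)$, I would move $\al^\mu(x_1)$ through $Y_T^0(e_\al,x_2)$ using the commutators $[h^\mu_\pm(x_1),E^\mu_\mp(-\al,x_2)]=\cdots$ from the proof of Proposition \ref{relations-AL}, together with $[\al^\mu(0),e_\al^\mu]=\<\al\uz,\al\>e_\al^\mu$ and the identity $\xi^k x_1^{-1}/(x_1/x_2-\xi^k)=(x_1-\xi^k x_2)^{-1}-x_1^{-1}$; the $x_1^{-1}$-contributions then cancel against the zero-mode contribution, leaving
\begin{align*}
Y_T^0(\al,x_1)Y_T^0(e_\al,x_2)
&=p(x_1)\,\nord\al^\mu(x_1)Y_T^0(e_\al,x_2)\nord\\
&\quad+p(x_1)\Bigl(\iota_{x_1,x_2}\sum_{k\in\Z_N}\<\mu^k\al,\al\>\frac{1}{x_1-\xi^k x_2}\Bigr)Y_T^0(e_\al,x_2),
\end{align*}
where $\nord\,\cdot\,\nord$ is the ordinary normal-ordered product, lying in $\te{Hom}(V_T,V_T((x_1,x_2)))$ and regular at $x_1=x_2$. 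Taking $f(x_1,x_2)=x_1^N-x_2^N=\prod_{k\in\Z_N}(x_1-\xi^k x_2)$ (which clears every pole) in the definition of $Y_\E^\phi$, the factors of $f$ cancel and one obtains
\begin{align*}
Y_\E^\phi\bigl(Y_T^0(\al,x),z\bigr)Y_T^0(e_\al,x)
&=p(\phi(x,z))\,\nord\al^\mu(\phi(x,z))Y_T^0(e_\al,x)\nord\\
&\quad+\sum_{k\in\Z_N}\<\mu^k\al,\al\>\,p(\phi(x,z))\,\iota_{x,z}\Bigl(\frac{1}{\phi(x,z)-\xi^k x}\Bigr)Y_T^0(e_\al,x).
\end{align*}
The coefficient of $z^0$ here is $Y_T^0(\al,x)_{-1}^\phi Y_T^0(e_\al,x)$; using the two facts above it equals $p(x)\nord\al^\mu(x)Y_T^0(e_\al,x)\nord+\bigl(\sum_{0\ne k\in\Z_N}\<\mu^k\al,\al\>\tfrac{p(x)}{x(1-\xi^k)}+\half\<\al,\al\>p'(x)\bigr)Y_T^0(e_\al,x)$.

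On the other side, differentiating the defining product formula for $Y_T^0(e_\al,x)$ by means of $\frac{d}{dx}E^\mu_\pm(-\al,x)=\al^\mu_\pm(x)E^\mu_\pm(-\al,x)$, $\frac{d}{dx}x^{\al\uz}=x^{-1}\al^\mu(0)\,x^{\al\uz}$ and the product rule yields
\[
\frac{d}{dx}Y_T^0(e_\al,x)=\nord\al^\mu(x)Y_T^0(e_\al,x)\nord+\Bigl(\half\<\al,\al\>\frac{p'(x)}{p(x)}+\half\<\al\uz-\al,\al\>\frac{1}{x}\Bigr)Y_T^0(e_\al,x);
\]
multiplying by $p(x)$ and comparing with the previous display, the two expressions agree precisely when $\half\<\al\uz-\al,\al\>=\sum_{0\ne k\in\Z_N}\<\mu^k\al,\al\>/(1-\xi^k)$, which is exactly \eqref{fact2}. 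The delicate point throughout is the bookkeeping in the two middle displays, and above all the treatment of the $k=0$ summand: its naive coincident-point self-contraction diverges, but the $\phi$-coordinate replaces it by $p(\phi(x,z))/(\phi(x,z)-x)$, which has only a simple pole in $z$ with finite constant term $\half p'(x)$ — this is the mechanism by which the $\phi$-coordinate manufactures exactly the extra scalar term matching $p(x)\,\frac{d}{dx}Y_T^0(e_\al,x)$.
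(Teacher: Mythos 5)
Your proof is correct and follows essentially the same strategy as the paper's: expand $Y_T^0(\al,x_1)Y_T^0(e_\al,x_2)$ via the commutators of $\al^\mu_\pm$ with $E^\mu_\mp$, clear the poles at $x_1=\xi^k x_2$ by multiplying by $x_1^N-x_2^N$, substitute $x_1=\phi(x,z)$, extract the $z^0$-coefficient, and match against $p(x)\frac{d}{dx}Y_T^0(e_\al,x)$ using the identities of Remark \ref{simple-facts}. Your two streamlinings — absorbing the $x_1^{-1}$-parts of the contraction into the zero-mode term so the singular part becomes $\sum_k\<\mu^k\al,\al\>(x_1-\xi^kx_2)^{-1}$, and reading off the $k=0$ constant term via $\partial_z\log(\phi(x,z)-x)=z^{-1}+\half p'(x)+\cdots$ — are cleaner in presentation but lead to the same calculation, invoking \eqref{fact2} where the paper invokes \eqref{fact3}.
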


\begin{proof}
Set $\al^\mu_\pm(x)=p(x)\sum_{n\in \Z_{\pm}}\al^\mu(n)x^{-n-1}$.
Notice that
\begin{align*}
  [\al_+^\mu(x_1),E_-^\mu(-\al,x_2)]=\(\sum_{k\in\Z_N}\<\al,\mu^k(\al)\>
  \frac{\xi^kp(x_1)x_1\inverse}{x_1/x_2-\xi^k}\)E_-^\mu(-\al,x_2)
\end{align*}
and $\prod_{k\in\Z_N}(x_1/x-\xi^k)=(x_1/x)^{N}-1$. Using these we have
\begin{eqnarray*}
 &&((x_1/x)^{N}-1)Y_T^0(\al,x_1)Y_T^0(e_\al,x)\\
  &=&((x_1/x)^{N}-1)\Bigg(\al^\mu_-(x_1)Y_T^0(e_\al,x)
  +Y_T^0(e_\al,x)\al^\mu_+(x_1)+Y_T^0(e_\al,x)\al^\mu(0)p(x_1)x_1\inverse\\
  &&+\<\al\uz,\al\>Y_T^0(e_\al,x)p(x_1)x_1\inverse
  +\sum_{k\in\Z_N}Y_T^0(e_\al,x)\<\al,\mu^k(\al)\>\frac{\xi^kp(x_1)x_1\inverse}{x_1/x-\xi^k}
  \Bigg),
\end{eqnarray*}
which lies in $\te{Hom}(V_T,V_T((x_1,x)))$.  Then
\begin{align*}
  &Y_\E^\phi(Y_T^0(\al,x),z)Y_T^0(e_\al,x)\\
  =&\left((\phi(x,z)/x)^{N}-1\right)^{-1}
  \left[((x_1/x)^{N}-1)Y_T^0(\al,x_1)Y_T^0(e_\al,x)\right]\Bigg|_{x_1=\phi(x,z)}\\
  =&\al^\mu_-(\phi(x,z))Y_T^0(e_\al,x)
  +Y_T^0(e_\al,x)\al^\mu_+(\phi(x,z))+Y_T^0(e_\al,x)\al^\mu(0)p(\phi(x,z))\phi(x,z)\inverse
  \\
  &+\<\al\uz,\al\>Y_T^0(e_\al,x)p(\phi(x,z))\phi(x,z)\inverse\nonumber\\
  &+\sum_{k\in\Z_N}Y_T^0(e_\al,x)\<\al,\mu^k(\al)\>
  \frac{\xi^kp(\phi(x,z))\phi(x,z)\inverse}{\phi(x,z)/x-\xi^k}.
\end{align*}
(With $\phi(x,z)$ an invertible element of $\C((x))((z))$, $\left((\phi(x,z)/x)^{N}-1\right)^{-1}$
 is viewed as an element of $\C((x))((z))$.)
Note that
\begin{align*}
  \te{Res}_zz\inverse \frac{\xi^kp(\phi(x,z))\phi(x,z)\inverse}{\phi(x,z)/x-\xi^k}=\left\{
  \begin{array}{ll}
  \frac{1}{2}p'(x)-p(x)x\inverse &\te{if }k=0,\\
  \frac{\xi^k}{1-\xi^k}p(x)x\inverse &\te{if }k\ne 0.
  \end{array}\right.
\end{align*}
Then we have
\begin{eqnarray*}
 & &Y_T^0(\al,x)_{-1}^\phi Y_T^0(e_\al,x)\nonumber\\
  &=&\te{Res}_zz\inverse Y_\E^\phi(Y_T^0(\al,x),z)Y_T^0(e_\al,x)\nonumber\\
  &=&\al^\mu_-(x)Y_T^0(e_\al,x)
  +Y_T^0(e_\al,x)\al^\mu_+(x)+Y_T^0(e_\al,x)\al^\mu(0)p(x)x\inverse\nonumber\\
  &&+Y_T^0(e_\al,x)\(\frac{1}{2}\<\al,\al\>p'(x)+\<\al\uz-\al,\al\>p(x)x^{-1}
  +\sum_{0\ne k\in\Z_N}\frac{\xi^k}{1-\xi^k}\<\mu^k(\al),\al\>p(x)x\inverse \).
\end{eqnarray*}
Using the simple facts in Remark \ref{simple-facts}, we obtain
\begin{align}
  &Y_T^0(\al,x)_{-1}^\phi Y_T^0(e_\al,x)\nonumber\\
  =&\te{Res}_zz\inverse Y_\E^\phi(Y_T^0(\al,x),z)Y_T^0(e_\al,x)\nonumber\\
  =&\al^\mu_-(x)Y_T^0(e_\al,x)
  +Y_T^0(e_\al,x)\al^\mu_+(x)+Y_T^0(e_\al,x)\al^\mu(0)p(x)x\inverse\nonumber\\
  &+\frac{1}{2}\<\al,\al\>p'(x)Y_T^0(e_\al,x)
  +\frac{1}{2}\<\al\uz-\al,\al\>Y_T^0(e_\al,x)p(x)x\inverse\nonumber\\
  =&p(x)\frac{d}{dx} Y_T^0(e_\al,x),\nonumber
\end{align}
as desired.
\end{proof}

Now, we are in a position to present our main result of this section.

\begin{thm}
Let $T$ be an $L\uz$-graded $\C_{\epsilon_\mu}[L]$-module from category $\mathcal T$.
Then there exists a $(\Z_N,\chi_{\phi})$-equivariant $\phi$-coordinated quasi $V_L$-module structure
$Y_T(\cdot,x)$ on $V_T$, which is uniquely determined by
\begin{align}
 Y_T(h,x)=Y_T^0(h,x),\quad Y_T(e_\al,x)=Y_T^0(e_\al,x)\quad\te{for }h\in\h,\  \al\in L.
\end{align}
Furthermore, if $T$ is an irreducible $L\uz$-graded $\C_{\epsilon_\mu}[L]$-module, then $V_{T}$ is an irreducible
$\phi$-coordinated quasi $V_{L}$-module.
\end{thm}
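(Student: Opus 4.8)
The plan is to obtain the $V_L$-module structure on $V_T$ by transporting it along a homomorphism of nonlocal vertex algebras, and then to read off equivariance and, when $T$ is irreducible, irreducibility. By Corollary~\ref{lem:UT-va} the object to be transported is already at hand: $\langle U_T\rangle_\phi$ is a $(\Z_N,\chi)$-module nonlocal vertex algebra and $V_T$ is a $(\Z_N,\chi_\phi)$-equivariant $\phi$-coordinated quasi $\langle U_T\rangle_\phi$-module with $Y_{V_T}(a(x),z)=a(z)$.

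First I would apply the universal property of $V_L$, Proposition~\ref{prop:AQ-mod-hom-VA-hom}, to the linear map $\psi_0:\h\oplus\C_\epsilon[L]\to\langle U_T\rangle_\phi$ given by $\psi_0(h)=Y_T^0(h,x)$ and $\psi_0(e_\al)=Y_T^0(e_\al,x)$. Its hypotheses are exactly relations already established: $\psi_0(e_0)=Y_T^0(e_0,x)=1_{V_T}$ gives (AL1); relations (AL2), (AL3) and (AL6) are \eqref{eq:lattice-va-phi-quasi-mod-rel-temp-1}, \eqref{eq:lattice-va-phi-quasi-mod-rel-temp-2} and Lemma~\ref{normal-order-derivative}; and the key relation \eqref{A(L)457} is precisely \eqref{eq:lattice-va-phi-quasi-mod-rel-temp-3}. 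Hence $\psi_0$ extends uniquely to a homomorphism $\Psi:V_L\to\langle U_T\rangle_\phi$ of nonlocal vertex algebras, which is onto since $U_T$ generates the target. Pulling $V_T$ back along $\Psi$ — it is routine that the defining conditions of a $\phi$-coordinated quasi module, weak $\phi$-associativity \eqref{quasi-weak-assoc} in particular, pass through a homomorphism — yields a $\phi$-coordinated quasi $V_L$-module $Y_T(v,x):=Y_{V_T}(\Psi(v),x)$, and $Y_T(h,x)=Y_T^0(h,x)$, $Y_T(e_\al,x)=Y_T^0(e_\al,x)$ by construction. Uniqueness of a structure with these values is immediate from \eqref{quasi-weak-assoc}, since $\h+\C_\epsilon[L]$ generates $V_L$.

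For the $(\Z_N,\chi_\phi)$-equivariance I would invoke Lemma~\ref{lem:G-equiv-phi-mod-construct} with the generating set $S=\h\cup\C_\epsilon[L]$; the compatibility \eqref{def-chi} between $\chi$, $\chi_\phi$ and $\phi$ needed for this holds here because $\chi=\chi_\phi^{-r}$ and $p(x)=x^{r+1}p_0(x^N)$. The quasi-compatibility condition \eqref{eq:quasi-compatible-temp-tt01} is inherited from the corresponding property of $V_T$ over $\langle U_T\rangle_\phi$, applied to $\Psi(u),\Psi(v)$, so only $Y_T(R(k)v,x)=Y_T(v,\chi_\phi(k)^{-1}x)$ for $v\in S$ remains. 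With $R(k)=\wh\mu^k\xi^{-krL(0)}$ and $L(0)h=h$, $L(0)e_\al=\half\langle\al,\al\rangle e_\al$, this reduces for $v=h$ to $\xi^{-kr}Y_T^0(\mu^k(h),x)=Y_T^0(h,\xi^{-k}x)$ and for $v=e_\al$ to $\xi^{-kr\langle\al,\al\rangle/2}Y_T^0(\wh\mu^k(e_\al),x)=Y_T^0(e_\al,\xi^{-k}x)$, both of which follow by iterating the two identities of Lemma~\ref{equivariance-generators}.

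Finally, assuming $T$ irreducible in $\mathcal T$, let $0\ne M\subseteq V_T$ be a $\phi$-coordinated quasi $V_L$-submodule. Since $p(x)=x^{r+1}p_0(x^N)$ with $p_0(0)\ne0$ is a unit in $\C((x))$, from the component operators of $Y_T(h,x)=Y_T^0(h,x)=p(x)\sum_m h^\mu(m)x^{-m-1}$ one recovers all the $h^\mu(m)$, so $M$ is $\wh\h^\mu$-stable; and since the $h_{(0)}$ ($h\in\h$) act diagonally and separate the $L\uz$-homogeneous components of $V_T$, $M$ is $L\uz$-graded. A standard Fock-module argument — the vectors of $S(\wh\h^\mu_-)\ot T$ killed by all positive modes $h^\mu(n)$, $n>0$, lie in $1\ot T$, the Heisenberg pairing being nondegenerate as the level $1/N\ne0$ — gives $M\cap T\ne0$. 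For homogeneous $w\in M\cap T$ the scalar prefactor $x^{\al\uz}\,p(x)^{\half\langle\al,\al\rangle}\,x^{\half\langle\al\uz-\al,\al\rangle}$ in $Y_T^0(e_\al,x)$ (Definition~\ref{dYT0}) is a unit $x^d u(x)$ with $u(0)\ne0$, $E_+^\mu(-\al,x)$ acts as the identity on $T$, and the lowest term of $E_-^\mu(-\al,x)(e_\al^\mu w)$ is $e_\al^\mu w$; so the $x^d$-coefficient of $Y_T^0(e_\al,x)w$ is $u(0)e_\al^\mu w$, whence $e_\al^\mu w\in M\cap T$. Thus $M\cap T$ is a nonzero subobject of $T$ in $\mathcal T$ (the $\wh\mu$-equivariance \eqref{def-T-modules} being inherited), so $M\cap T=T$, and applying the creation operators $h^\mu(-n)$ ($n>0$) gives $M=S(\wh\h^\mu_-)\ot T=V_T$. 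The step I expect to require the most care is this last one: extracting the bare $\C_{\epsilon_\mu}[L]$-action on $T$ from the components of $Y_T^0(e_\al,x)$ means tracking the $\C((x))$-unit prefactors degree by degree and checking that $M\cap T$ is genuinely an object of $\mathcal T$. The construction and equivariance are, by contrast, a bookkeeping assembly of Lemmas~\ref{lem:lattice-va-quasi-phi-mod-YE-rel}, \ref{normal-order-derivative}, \ref{equivariance-generators} and \ref{lem:G-equiv-phi-mod-construct}, Corollary~\ref{lem:UT-va}, and Proposition~\ref{prop:AQ-mod-hom-VA-hom}.
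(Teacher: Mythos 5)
Your proof follows the paper's argument exactly: you transport the module structure along the homomorphism $\Psi:V_L\to\langle U_T\rangle_\phi$ obtained from Proposition~\ref{prop:AQ-mod-hom-VA-hom} together with Lemmas \ref{lem:lattice-va-quasi-phi-mod-YE-rel} and \ref{normal-order-derivative}, use Lemmas \ref{equivariance-generators} and \ref{lem:G-equiv-phi-mod-construct} for equivariance, and observe $\wh\h^\mu$- and $\C_{\epsilon_\mu}[L]$-stability of submodules for irreducibility. The only difference is cosmetic: where the paper dispatches irreducibility in one sentence, you supply the standard Fock-space details (recovering the Heisenberg and twisted-group-algebra actions from the component operators, grading $M$ by $L\uz$, and locating a nonzero vacuum-killed vector in $M\cap T$); this is a correct unpacking of the paper's claim, not a different route.
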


\begin{proof}  First of all, it follows from the commutation relations in
 Lemma \ref{lem:lattice-va-quasi-phi-mod-YE-rel} that $\<U_T\>_\phi$ is in fact a vertex algebra.
From Lemma \ref{lem:lattice-va-quasi-phi-mod-YE-rel},
we see that the relations (AL1-3) and \eqref{A(L)457} hold on $\<U_T\>_\phi$ with
\begin{align*}
  h[z]= Y_\E^\phi(Y_T^0(h,x),z),\quad
  e_\al[z]= Y_\E^\phi(Y_T^0(e_\al,x),z)\quad\te{for }h\in\h,\  \al\in L.
\end{align*}
Noticing that $p(x)\partial_x$ is the canonical derivation (the ${\mathcal{D}}$-operator) of the vertex algebra $\<U_T\>_\phi$,
using Lemma \ref{normal-order-derivative} we get
\begin{align}\label{eq:lattice-va-quasi-phi-mod-YE-AL6-rel}
  \partial_zY_\E^\phi(Y_T^0(e_\al,x),z)&=Y_\E^\phi(p(x)\partial_xY_T^0(e_\al,x),z)
  =Y_\E^\phi(Y_T^0(\al,x)_{-1}^\phi Y_T^0(e_\al,x),z)\nonumber\\
&=\nord Y_\E^\phi(Y_T^0(\al,x),z)Y_\E^\phi(Y_T^0(e_\al,x),z)\nord.
\end{align}
Note that $Y_T^0(e_0,x)=1_{V_T}$ the vacuum vector of $\<U_T\>_\phi$.
By Proposition \ref{prop:AQ-mod-hom-VA-hom}, there exists
 a vertex algebra homomorphism
$\psi$ from $V_L$ to $\<U_T\>_\phi$ such that
$$\psi(h)=Y_{T}^0(h,x),\   \   \psi(e_{\al})=Y_{T}^{0}(e_{\al},x)\   \   \  \mbox{ for }h\in \h,\ \al\in L.$$
As  $V_{T}$ is a $\phi$-coordinated quasi  $\<U_T\>_\phi$-module with $Y_{W}(a(x),z)=a(z)$
for $a(x)\in \<U_T\>_\phi$ (by Corollary \ref{lem:UT-va}),
it then follows  that $V_T$ is a $\phi$-coordinated quasi
$V_L$-module with $Y_T(u,z)=Y_W(\psi(u),z)$ for $u\in V_L$. For $h\in \h,\ \al\in L$, we have
\begin{align*}
&Y_{T}(h,z)=Y_{W}(\psi(h),z)=Y_{W}(Y_{T}^{0}(h,x),z)=Y_{T}^{0}(h,z),\nonumber\\
&Y_{T}(e_{\al},z)=Y_{W}(\psi(e_{\al}),z)=Y_{W}(Y_{T}^{0}(e_{\al},x),z)=Y_{T}^{0}(e_{\al},z).
\end{align*}
From Lemma \ref{equivariance-generators} we have
\begin{align}
Y_T^0(R(k)h,x)=Y_T^0(h,\chi_{\phi}(k)^{-1} x),\quad
Y_T^0(R(k)e_\al,x)=Y_T^0(e_\al,\chi_{\phi}(k)^{-1}x).
\end{align}
Then
\begin{align*}
Y_{T}(R(k)h,z)=Y_{T}(h,\chi_{\phi}(k)^{-1}z),\   \   \  Y_{T}(R(k)e_{\al},z)=Y_{T}(e_{\al},\chi_{\phi}(k)^{-1}z).
\end{align*}
As $\h+\C_{\epsilon}[L]$ generates $V_L$ as a vertex algebra, by Lemma \ref{lem:G-equiv-phi-mod-construct},
 $V_{T}$ is a $(\Z_{N},\chi_{\phi})$-equivariant $\phi$-coordinated quasi $V_{L}$-module.

For the furthermore statement, from the construction of $V_{T}$ we see that
any submodule of $V_T$ is stable under the actions of
 $U(\wh\h^\mu)$ and $\C_{\epsilon_{\mu}}[L]$.
Then it follows  that $V_T$ is irreducible if $T$ is irreducible.
\end{proof}

\end{document}